\newtheorem{theorem}{Theorem}[section]
\newtheorem{corollary}[theorem]{Corollary}
\newtheorem{definition}[theorem]{Definition}
\newtheorem{lemma}[theorem]{Lemma}
\newtheorem{proposition}[theorem]{Proposition}
\newtheorem{remark}[theorem]{Remark}
\numberwithin{equation}{section}
\algrenewcommand\alglinenumber[1]{\tiny #1:}
\definecolor{gosia}{RGB}{50,200,50}
\definecolor{damian}{RGB}{255,0,0}
\definecolor{wjs}{RGB}{0,0,255}
\newcommand{\T}[1]{^\mathsf{#1}}
\newcommand{\argmax}[1]{\underset{#1}{\operatorname{arg}\,\operatorname{max}}\;}
\newcommand{\argmin}[1]{\underset{#1}{\operatorname{arg}\,\operatorname{min}}\;}
\newcommand{\minimize}[1]{\underset{#1}{\operatorname{minimize}}\;}
\newcommand{\maximize}[1]{\underset{#1}{\operatorname{maximize}}\;}
\newcommand{\I}[1]{\llbracket {#1}\rrbracket_I}
\newcommand{\iI}[1]{\llbracket {#1}\rrbracket_{\mathbb{I}}}
\newcommand{\XI}[1]{\llbracket {#1}\rrbracket_{X,I}}
\newcommand{\II}{\mathbb{I}}
\newcommand{\y}{\widetilde{y}}
\newcommand{\X}{\widetilde{X}}
\newcommand{\Beta}{\widetilde{\beta}}
\newcommand{\ES}[1]{\textnormal{\tiny {#1}}}
\renewcommand{\arraystretch}{1.2}
\newcommand{\Chi}{\raisebox{2pt}{$\chi$}}
\begin{document}
\begin{center}{\large\bf  Group SLOPE - adaptive selection of groups of predictors}
\vspace{0.2in}

\noindent{Damian Brzyski$^{a,b}$,  Weijie Su$^{c}$, Ma\l{}gorzata Bogdan$^{a,d}$,}
\vspace{0.2in}

\noindent $\mbox{}^{a}$ {\it \small Faculty of Mathematics, Wroclaw University of
Technology, Poland}

\noindent $\mbox{}^{b}$ {\it \small  Institute of Mathematics, Jagiellonian University, Cracow, Poland}
 
\noindent  $\mbox{}^{c}$ {\it \small  Department of Statistics, Stanford University, Stanford, CA 94305, USA}

\noindent $\mbox{}^{d}$ {\it \small  Institute of Mathematics, Wroclaw University, Poland}

\vspace{0.2in}

\noindent Key words:  Multiple Regression, Model Selection, Group selection, SLOPE, False Discovery Rate, Asymptotic Minimax

\end{center}

\begin{abstract}
Sorted L-One Penalized Estimation (SLOPE, \cite{SLOPE}) is a relatively new convex optimization procedure which allows for adaptive selection of regressors under sparse high dimensional designs. Here we extend the idea of SLOPE to deal with the situation when one aims at selecting whole groups of explanatory variables instead of single regressors. This approach is particularly useful when variables in the same group are strongly correlated and thus true predictors are difficult to distinguish from their correlated "neighbors"'. We formulate the respective convex optimization problem, gSLOPE (group SLOPE), and propose an efficient algorithm for its solution. We also define a notion of the group false discovery rate (gFDR) and provide a choice of the sequence of tuning parameters for gSLOPE so that gFDR is provably controlled at a prespecified level if the groups of variables are orthogonal to each other. Moreover, we prove that the resulting procedure adapts to unknown sparsity and is asymptotically minimax with respect to the estimation of the proportions of variance of the response variable explained by regressors from different groups. We also provide a method for the choice of the regularizing sequence  when variables in different groups are not orthogonal but statistically independent and illustrate its good properties  with computer simulations. 
%under an idealized gaussian setting.
% as well as in case of when the design matrix contains genotypes of true SNPs. % \gosia{We need a good real life example. Any suggestions ?  We could ask Chiara if we can use our GWAS data or maybe use some examples from the gLASSO papers ?}  

\end{abstract}

%%%%%%%%%%%%%%%%%%%%%%%% INTRODUCTION %%%%%%%%%%%%%%%%%%%%%%%%%%%%%%%%%%%%%%%%%%%%%%%%%%%%%%%%%%%%%%%%%%%%%%%%%%%%%%%%%%%%%%%%%%%%%%%%%%%%

%%%%%%%%%%%%%%%%%%%%%%%%%%%%%%%%%%%%%%%%%%%%%%%%%%%%%%%%%%%%%%%%%%%%%%%%%%%%%%%%%%%%%%%%%%%%%%%%%%%%%%%%%%%%%%%%%%%%%%%%%%%%%%%%%%%%%%%%%%

\section{Introduction}

Consider the classical multiple regression model of the form
\begin{equation}
\label{mainmodel}
y=X\beta+z,\end{equation} where $y$ is the $n$ dimensional vector of values of the response variable, $X$ is the $n$ by $p$ experiment (design) matrix and $z\sim\mathcal{N}(0,\sigma^2 \mathbf{I}_n)$. We assume that $y$ and $X$ are known, while $\beta$ is unknown. 
%Furthermore, we will work with the assumption that columns of design matrix are \gosia{standardized so they have a zero mean and a unit $\ell_2$ norm}.  \wjs{Maybe this standardization can be introduced later.}
%(after removing zero columns and reducing dimension $p$ first, if zero columns exist). 
In many cases of data mining the purpose of the statistical analysis is to recover the support of $\beta$, which identifies the set of important regressors. Here, the true support corresponds to truly relevant variables (i.e. variables which have impact on observations).  Common procedures to solve this model selection problem rely on minimization of some objective function consisting of the weighted sum of two components: first term responsible for the goodness of fit and second term penalizing the model complexity. 
Among such procedures one can mention classical model selection criteria like the Akaike Information Criterion (AIC) \cite{Aka} and the Bayesian Information Criterion (BIC) \cite{Schw}, where the  penalty depends on the number of variables included in the model,  or  LASSO \cite{LASSO}, where the penalty depends on the $\ell_1$ norm of regression coefficients. The main advantage of LASSO over classical model selection criteria is that it is a convex optimization problem and, as such, it can be easily solved even for very large design matrices.  

LASSO solution is obtained by solving the optimization problem 
\begin{equation}
\label{LASSO}
\argmin b\ \ \bigg\{\frac 12\big\|y-Xb\big\|^2+\lambda_L\|b\|_1\bigg\},
\end{equation}
where $\lambda_L$ is a tuning parameter defining the trade-off between the model fit and the sparsity of solution. 
In practical applications the selection of good $\lambda_L$ might be very challenging. For example it has been reported that in high dimensional settings  the popular cross-validation  typically leads  to detection of  a large number of false regressors (see e.g. \cite{SLOPE}). 
The general rule is that when one reduces $\lambda_L$, then LASSO can identify more elements from the true support (true discoveries) but at the same time it produces more false discoveries.  In general the numbers of true and false discoveries for a given  $\lambda_L$ depend on unknown properties on the data generating mechanism, like the number of true regressors and the magnitude of their effects. A very similar problem occurs when selecting  thresholds for individual tests in the context of multiple testing. Here it was found that the popular Benjamini-Hochberg rule (BH, \cite{BH}), aimed at control of the False Discovery Rate (FDR), adapts to the unknown data generating mechanism and has some desirable optimality properties under a variety of statistical settings (see e.g. \cite{ABDJ, ABOS, ABOS2, ABOS3}). The main property of this rule is that it relaxes the thresholds along the sequence of test statistics, sorted in the decreased order of magnitude. Recently the same idea was used in a new generalization of LASSO, named SLOPE (Sorted L-One Penalized Estimation, \cite{SLOPE2, SLOPE}). Instead of the $\ell_1$ norm (as in LASSO case), the method uses FDR control properties of $J_{\lambda}$ norm, defined as follows; for sequence $\{\lambda\}_{i=1}^p$ satisfying $\lambda_1\geq\ldots\geq\lambda_p\geq0$ and $b\in\mathbb{R}^p$, $J_{\lambda}(b):=\sum_{i=1}^p\lambda_i|b|_{(i)},$ where $|b|_{(1)}\geq\ldots\geq |b|_{(p)}$ is the vector of sorted absolute values of coordinates of $b$. SLOPE is the solution to a convex optimization problem
\begin{equation}
\label{SLOPE}
\argmin b\ \ \bigg\{\frac 12\big\|y-Xb\big\|^2+J_{\lambda}(b)\bigg\},
\end{equation}
which clearly reduces to LASSO  for $\lambda_1=\ldots=\lambda_p=:\lambda_L$.
Similarly as in classical model selection, the support of solution defines the subset of variables estimated as relevant. It was shown in \cite{SLOPE} that SLOPE is strongly connected with BH procedure under orthogonal case, i.e. when $X^TX=\mathbf{I}_n$. The main theoretical result presented in \cite{SLOPE} states that under such assumption, the sequence of tuning parameters could be specifically selected, such that the FDR control is guaranteed. Moreover, in \cite{WE} it is proved that SLOPE with this sequence of tuning parameters adapts to unknown sparsity and is asymptotically minimax under orthogonal and random Gaussian designs.  

In the sequence of examples presented in \cite{SLOPE2} and \cite{SLOPE} it was shown that SLOPE has very desirable properties in terms of FDR control in case when the  regressor variables are weakly correlated. While there exist other interesting approaches which allow to control FDR also under correlated designs (e.g. \cite{ko}),  the efforts to prevent detection of false regressors which are strongly correlated with true ones inevitably lead to a huge loss of power. 
An alternative approach to deal with strongly correlated predictors is to simply give up the idea of distinguishing between them and include all of them into the selected model as a group.
This leads to the problem of group selection in linear regression, extensively investigated and applied in many fields of science.
In many of these applications the groups are selected not only due to the strong correlations but also taking into account the problem specific scientific knowledge.
%For $i\in\{1,\ldots,m\}$ denote the rank of submatrix $X_{I_i}$ by $r_i$. Then $X_{I_i}$ could be decomposed as $X_{I_i}=\widetilde{X_i}R_i$, where $\widetilde{X_i}$ is some full rank matrix having $r_i$, $\ell_2$ normalized columns. Define $r:=r_1+\ldots+r_m$, $\widetilde{X}\in M(n,r)$ by conditions $\widetilde{X}_{\widetilde{I}_i}=\widetilde{X}_i$ and $\widetilde{\beta}\in\mathbb{R}^r$ by conditions $\widetilde{\beta}_{\widetilde{I}_i}:=R_i\beta_{I_i}$, for some partition $\widetilde{I}$ of set $\{1,\ldots, r\}$ satisfying $|\widetilde{I}_i|=r_i$ for all $i$. We simply have $\|X_{I_i}\beta_{I_i}\|_2=\|\widetilde{X}_{\widetilde{I}_i}\widetilde{\beta}_{\widetilde{I}_i}\|_2$. Summarizing, without loss of generality in this article we will assume that design (or experiment) matrix satisfies
%\begin{equation}
%\label{Assumptions}
%\|X_i\|_2 = 1, \qquad X_{I_j}\textrm{ has independent columns, }\quad i\in\{1,\ldots,p\},\quad j\in\{1,\ldots,m\}.
%\end{equation}

%It it worth to notice that under linearly independence of columns inside each group, the conditions $\|X_{I_i}\beta_{I_i}\|_2>0$ and $\|\beta_{I_i}\|_2>0$ leads to the same subset of groups defined as truly relevant. Various approaches give however different hierarchy of groups effects, since in general case $\|\beta_{I_i}\|_2>\|\beta_{I_j}\|_2$ does not imply that $\|X_{I_i}\beta_{I_i}\|_2>\|X_{I_j}\beta_{I_j}\|_2$.

%The idea of incorporating groups into penalization method and considering convex optimization problems is not new. 
Probably the most known convex optimization method for selection of gropus of explanatory variables is the group LASSO (gLASSO) \cite{Bakin}. For a fixed tuning parameter, $\lambda_{gL}>0$, the gLASSO estimate is most frequently (e.g. \cite{gLASSO1}, \cite{gLASSO5}) defined as a solution to optimization problem
\begin{equation}
\label{gLASSO}
\argmin b\ \ \bigg\{\frac 12\Big\|y-\sum_{i=1}^mX_{I_i}b_{I_i}\Big\|_2^2+\sigma\lambda_{gL}\sum_{i=1}^m\sqrt{|I_i|}\|b_{I_i}\|_2\bigg\},
\end{equation}
where the sets $I_1,\ldots,I_m$ form the partition of the set $\{1,\ldots,p\}$, $|I_i|$ denotes the number of elements in set $I_i$, $X_{I_i}$ is the submatrix of $X$ composed of columns indexed by $I_i$ and $\beta_{I_i}$ is the restriction of $\beta$ to indices from $I_i$. The method introduced in this article is, however, closer to the alternative version of gLASSO, in which penalties are imposed on $\|X_{I_i}b_{I_i}\|_2$ rather than $\|b_{I_i}\|_2$. This method was formulated in $\cite{gLASSO6}$, where authors defined estimate of $\beta$ as
\begin{equation}
\label{gLASSO2}
\beta^\ES{gL}:=\argmin b\ \ \bigg\{\frac 12\Big\|y-\sum_{i=1}^mX_{I_i}b_{I_i}\Big\|_2^2+\sigma\lambda_{gL}\sum_{i=1}^m\sqrt{|I_i|}\|X_{I_i}b_{I_i}\|_2\bigg\},
\end{equation}
with the condition $\|X_{I_i}\beta^\ES{gL}_{I_i}\|_2>0$ serving as a group relevance indicator.

Similarly as in the context of regular model selection, the properties of gLASSO strongly depend on the smoothing parameter $\lambda_{gL}$, whose optimal value is the function of unknown parameters of true data generating mechanism. Thus, a natural question arises if the idea of SLOPE can be used for construction of the similar adaptive procedure  for the group selection. To answer this query in this paper we define and investigate the properties of the group SLOPE (gSLOPE). We formulate the respective optimization problem  and provide the algorithm for its solution. We also define the notion of the group FDR (gFDR), and provide the theoretical choice of the sequence of smoothing parameters, which guarantees that SLOPE controls gFDR in the situation when variables in different groups are orthogonal to each other. Moreover, we prove that the resulting procedure adapts to unknown sparsity and is asymptotically minimax with respect to the estimation of the proportions of variance of the response variable explained by regressors from different groups. Additionally, we provide the way of constructing the sequence of smoothing parameters under the assumption that the regressors from distinct groups are independent and use computer simulation to show that it allows to control gFDR.
% under the idealized gaussian designs as well as in the genetic setting, where the design matrix contains true SNP genotypes.
\section{Group SLOPE}
\subsection{Formulation of the optimization problem}
\label{subsec:gs2711944}
Let the design matrix $X$ belong to the space $M(n,p)$ of matrices with $n$ rows and $p$ columns. Furthermore, suppose that $I=\{I_1,\ldots,I_m\}$ is some partition of the set $\{1,\ldots,p\}$, i.e. $I_i$'s are nonempty sets, $I_i\cap I_j=\emptyset$ for $i\neq j$ and $\bigcup I_i = \{1,\ldots,p\}$. We will consider the linear regression model with $m$ groups of the form
\begin{equation}
\label{gmodel}
y=\sum_{i=1}^mX_{I_i}\beta_{I_i}+z,
\end{equation}
where $X_{I_i}$ is the submatrix of $X$ composed of columns indexed by $I_i$ and $\beta_{I_i}$ is the restriction of $\beta$ to indices from the set $I_i$. We will use notations $l_1,\ldots,l_m$ to refer to the ranks of submatrices $X_{I_1},\ldots,X_{I_m}$. To simplify notations in further part, we will assume that $l_i>0$ (i.e. there is at least one nonzero entry of $X_{I_i}$ for all $i$). Besides this, $X$ may be absolutely arbitrary matrix, in particular any linear dependencies inside submatrices $X_{I_i}$ are allowed.

In this article we will treat the value $\|X_{I_i}\beta_{I_i}\|_2$ as a measure of an impact of $i$th group on the response and we will say that the group $i$ is truly relevant if and only if $\|X_{I_i}\beta_{I_i}\|_2>0$. Thus our task of the identification of the relevant groups is equivalent with finding the support of the vector $\XI{\beta}:= \big(\|X_{I_1} \beta_{I_1}\|_2, \ldots, \|X_{I_m} \beta_{I_m}\|_2\big)^\mathsf{T}$.

To estimate the nonzero coefficients of $\XI{\beta}$, we will use a new penalized method, namely group SLOPE (gSLOPE). For a given sequence of nonincreasing, nonnegative tuning parameters, $\lambda_1,\ldots,\lambda_m$, given sequence of positive weights, $w_1,\ldots,w_m$, and design matrix, $X$, the gSLOPE, $\beta^\ES{gS}$, is defined as any solution to 
\begin{equation}
\label{gSLOPE}
\beta^\ES{gS}: = \argmin b\ \ \bigg\{\frac 12\Big\|y-Xb\Big\|_2^2+\sigma J_{\lambda}\Big( W\XI{b}\Big)\bigg\},
\end{equation}
where $W$ is diagonal matrix defined by equations $W_{i,i}:=w_i,$ for $i=1,\ldots,m$. The estimate of $\XI{\beta}$ support is simply defined by the indices corresponding to nonzeros of $\XI{\beta^\ES{gS}}$.

It is easy to see that when one considers $p$ groups containing only one variable (i.e. single-groups situation), then taking all weights equal to one reduces (\ref{gSLOPE}) to SLOPE (\ref{SLOPE}). On the other hand, taking $w_i=\sqrt{|I_i|}$ and putting $\lambda_1=\ldots=\lambda_m=:\lambda_{gL}$, immediately gives gLASSO problem (\ref{gLASSO2}) with the smoothing parameter $\lambda_{gL}$. The gSLOPE could be therefore treated both: as the extension to SLOPE, and the extension to group LASSO.

Now, let us define $\widetilde{p}=l_1+\ldots+l_m$ and consider the following partition, $\II=\{\II_1,\ldots,\II_m\}$, of the set $\{1,\ldots,\widetilde{p}\}$
\begin{equation}
\II_1:=\big\{1,\ldots, l_1\big\},\quad \II_2:=\big\{l_1+1,\quad \ldots, l_1+l_2\big\},\ \ldots,\quad \II_m:=\Big\{\sum_{j=1}^{m-1}l_i+1,\ldots, \sum_{j=1}^{m}l_i\Big\}.
\end{equation}
Observe that each $X_{I_i}$ can be represented as $X_{I_i}=U_iR_i$, where $U_i$ is any matrix with $l_i$ orthogonal columns of a unit $l_2$ norm, whose span coincides with the space spanned by the columns of $X_{I_i}$ , and $R_i$ is the corresponding matrix of a full row rank. Therefore, for the purpose of estimating the group effects, we can reduce the design matrix
$X$ to the matrix $\widetilde{X}\in M(m,\widetilde{p})$ such that $\widetilde{p}=l_1+\ldots+l_m$ and $\widetilde{X}_{\II_i}=U_i$ for all $i$.

Now observe that denoting  $c_{\II_i}:=R_ib_{I_i}$ for $i\in\{1,\ldots,m\}$ we immediately obtain
\begin{equation}
\arraycolsep=1.4pt\def\arraystretch{1.8}
\begin{array}{c}
Xb=\sum\nolimits_{i=1}^mX_{I_i}b_{I_i} = \sum\nolimits_{i=1}^mU_iR_ib_{I_i} = \sum\nolimits_{i=1}^m\widetilde{X}_{\II_i}c_{\II_i}=\widetilde{X}c,\\
\Big(\XI{b}\Big)_i=\|X_{I_i}b_{I_i}\|_2 = \|R_ib_{I_i}\|_2= \|c_{\II_i}\|_2	
\end{array}
\end{equation}
and the problem (\ref{gSLOPE}) can be equivalently presented in the form
\begin{equation}
\label{24111029}
\left\{
\begin{array}{l}
c^\ES{gS}: = \argmin {c}\ \ \bigg\{\frac12\big\|y-\widetilde{X}c\big\|_2^2+\sigma J_{\lambda}\Big( W\iI{c}\Big)\bigg\} \\
c^\ES{gS}_{\II_i}:=R_i\beta^\ES{gS}_{I_i},\ i=1,\ldots,m
\end{array}
\right.,
\end{equation}
for $\iI{c}:= \big(\|c_{\II_1}\|_2, \ldots, \|c_{\II_m}\|_2\big)^\mathsf{T}.$ 
Therefore to identify the relevant groups and estimate their group effects it is enough to solve the optimization problem \eqref{24111029}. We will say that (\ref{24111029}) is the standardized version of the problem \eqref{gSLOPE}.

\begin{remark} 
At the time of finishing this article, we have been informed that the similar formulation of the group SLOPE was proposed in an independent work of Alexej Gossmann et al. \cite{Gossmann2015}. However \cite{Gossmann2015} considers only the case when the weights $w_i$ are equal to the square root of the group size and penalties are imposed directly on $\|\beta_{I_i}\|_2$ rather than on group effects $\|X_{I_i} \beta_{I_i}\|_2$. This makes the method of \cite{Gossmann2015} dependent on scaling or rotations of variables in a given group.  In comparison to \cite{Gossmann2015}, who propose a Monte Carlo approach for estimating the regularizing sequence, our article provides the choice of the smoothing parameters which provably allows for FDR control in case where the regressors in different groups are orthogonal to each other and, according to our simulation study, allows for FDR control where regressors in different groups are independent. 
\end{remark}  

\subsection{Group FDR}

Group SLOPE is designed to select groups of variables, which might be very strongly correlated within a group or even linearly dependent. In this context we do not intend to identify single important predictors but rather want to point at the groups which contain at least one true regressor. To theoretically investigate the properties of gSLOPE in this context we now introduce the respective notion of group FDR (gFDR).
\begin{definition}
Consider model (\ref{gmodel}) with some design matrix $X$. Let $\beta^\ES{gS}$ be an estimate given by (\ref{gSLOPE}). We define two random variables: the number of all groups selected by gSLOPE (Rg) and the number of groups falsely discovered by gSLOPE (Vg), as
$$Rg:=\big | \big\{i:\ \|X_{I_i}\beta^\ES{gS}_{I_i}\|_2\neq 0\big\}\big |, \qquad Vg:=\big |\big\{i:\ \|X_{I_i}\beta_{I_i}\|_2=0,\ \|X_{I_i}\beta^\ES{gS}_{I_i}\|_2 \neq 0\big\}\big |.$$
\end{definition}
\begin{definition}
We define the false discovery rate for groups (gFDR) as
\begin{equation}
gFDR: = \mathbb{E}\left [ \frac{Vg}{\max\{Rg, 1\}} \right].
\end{equation}
\end{definition}
Our goal is the identification of the regularizing sequence for SLOPE such that gFDR can be controlled at any given level $q\in(0,1)$. In the next section we will provide such a sequence, which provably controls gFDR in case when variables in different groups are orthogonal to each other. In subsequent sections we will replace this condition with the weaker assumption of the stochastic independence of regressors in different groups.
 
%%%%%%%%%%%%%%%%%%%%%%%%%%%%%%%%%%%%%%%%%%%%%%%%%%%%%%%%%%%%%%%%%%%%%%%%%%%%%%%%%%%%%%%%%%%%%%%%%%%%%%%%%%%%%%%%%%%%%%%%%%%%%%%%%%%%%%%%%%

\subsection{Control of gFDR when variables from different groups are orthogonal}

In this section we will present the sequence of smoothing parameters $\lambda_i$ for gSLOPE, which guarantees gFDR control at the assumed level if variables   from different groups are orthogonal to each other. Before the statement of the respective theorem, we will recall the definition of $\chi$ distribution and define a scaled $\chi$ distribution.
\begin{definition}
We will say that a random variable $X_1$ has a $\Chi$ distribution with $l$ degrees of freedom, and write $X_1 \sim \Chi_l$, when $X_1$ could be expressed as $X_1 = \sqrt{X_2},$ for $X_2$ having a $\Chi^2$ distribution with $l$ degrees of freedom.
We will say that a random variable $X_1$ has a scaled $\Chi$ distribution with $l$ degrees of freedom and scale $\mathcal{S}$, when $X_1$ could be expressed as $X_1 = \mathcal{S}\cdot X_2,$ for $X_2$ having a $\Chi$ distribution with $l$ degrees of freedom. We will use the notation $X_1 \sim \mathcal{S}\Chi_l$. 
\end{definition}
\begin{theorem}[gFDR control under orthogonal case]
\label{gFDRcontrol}
Consider model (\ref{gmodel}) with the design matrix $X$ satisfying $X_{I_i}^\mathsf{T}X_{I_j}=0$, for any $i\neq j$. Denote the number of zero coefficients in $\XI{\beta}$ by $m_0$ and let $w_1,\ldots,w_m$ be positive numbers. Moreover,
\begin{itemize}
\setlength\itemsep{1pt}
\item let $W$ be diagonal matrix such as $W_{i,i}=w_i$, for $i=1,\ldots, m$,
\item for each $i$ denote by $l_i$ the rank of submatrix $X_{I_i}$,
\item define $\lambda=(\lambda_1,\ldots,\lambda_m)^\mathsf{T}$, with $\lambda_i:=\max\limits_{j=1,\ldots,m}\left\{\frac1{w_j}F^{-1}_{\Chi_{l_j}}\left (1-\frac{q\cdot i}{m}\right )\right\}$, where $F_{\Chi_{l_j}}$ is a cumulative distribution function of $\Chi$ distribution with $l_j$ degrees of freedom,
\end{itemize}
Then any solution, $\beta^\ES{gS}$, to problem gSLOPE \eqref{gSLOPE} generates the same vector $\XI{\beta^\ES{gS}}$ and it holds
\begin{equation*}
gFDR =\mathbb{E}\left [ \frac{Vg}{\max\{Rg, 1\}} \right]\leq q \cdot \frac{m_0}{m}.
\end{equation*}
\end{theorem}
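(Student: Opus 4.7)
The plan is to exploit the decoupling provided by the orthogonality hypothesis to reduce gSLOPE to a problem on scalar group norms, and then to run the orthogonal-case FDR argument of \cite{SLOPE}. First I would pass to the standardization (\ref{24111029}): since $X_{I_i}^{\mathsf T}X_{I_j}=\mathbf 0$ for $i\neq j$, the orthonormal bases $U_i$ can be chosen so that their union is itself orthonormal, giving $\widetilde X^{\mathsf T}\widetilde X=\mathbf I_{\widetilde p}$. Setting $\widetilde y:=\widetilde X^{\mathsf T}y$, the gSLOPE problem becomes the proximal problem
\[
c^{\ES{gS}}=\argmin{c}\ \tfrac12\|\widetilde y-c\|_2^2+\sigma J_\lambda\bigl(W\iI{c}\bigr).
\]
The objective depends on $c_{\II_i}$ only through $\widetilde y_{\II_i}-c_{\II_i}$ and $\|c_{\II_i}\|_2$, so rotational symmetry inside each block forces $c^{\ES{gS}}_{\II_i}$ to be a nonnegative multiple of $\widetilde y_{\II_i}$. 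Writing $s_i:=\|\widetilde y_{\II_i}\|_2$ and $t_i:=\|c^{\ES{gS}}_{\II_i}\|_2$, the full problem collapses to the strictly convex $m$-dimensional program
\[
t^{\ES{gS}}=\argmin{t\in\mathbb R_{+}^m}\ \tfrac12\|s-t\|_2^2+\sigma J_\lambda(Wt),
\]
whose unique solution coincides with $\XI{\beta^{\ES{gS}}}=\iI{c^{\ES{gS}}}$; this already establishes the uniqueness claim.

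Second, I would work out the null-side distribution theory and calibrate the thresholds. Let $\mathcal N:=\{j:\|X_{I_j}\beta_{I_j}\|_2=0\}$, so $|\mathcal N|=m_0$. For $j\in\mathcal N$ the true mean $R_j\beta_{I_j}$ of $\widetilde y_{\II_j}$ vanishes, whence $\widetilde y_{\II_j}=\widetilde X_{\II_j}^{\mathsf T}z\sim\mathcal N(\mathbf 0,\sigma^2\mathbf I_{l_j})$ and therefore $s_j/\sigma\sim\Chi_{l_j}$; joint independence of $\{s_j\}_{j\in\mathcal N}$, and of these from the non-null $s_j$'s, follows from orthonormality of the blocks of $\widetilde X$. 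The stationarity conditions of the reduced program say that group $i$ is discovered ($t_i^{\ES{gS}}>0$) exactly when $s_i>\sigma w_i\lambda_k$, where $k$ is the rank of $w_it_i^{\ES{gS}}$ in $Wt^{\ES{gS}}$ sorted in decreasing order. Because the definition of $\lambda_i$ enforces $w_j\lambda_i\geq F^{-1}_{\Chi_{l_j}}(1-qi/m)$ for every $j$ and $i$, one obtains the uniform null bound
\[
\mathbb P\bigl(s_j>\sigma w_j\lambda_i\bigr)\ =\ \mathbb P\bigl(\Chi_{l_j}>w_j\lambda_i\bigr)\ \leq\ \tfrac{qi}{m}\qquad (j\in\mathcal N,\ i=1,\dots,m),
\]
which is the exact ``BH slope'' needed downstream.

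Third, I would plug these ingredients into the orthogonal-case FDR proof for SLOPE from \cite{SLOPE}. Decomposing
\[
gFDR\ =\ \sum_{j\in\mathcal N}\mathbb E\!\left[\frac{\mathds 1\{t_j^{\ES{gS}}>0\}}{\max(Rg,1)}\right],
\]
one uses monotonicity of $Rg$ in each $s_j$ to couple the $j$th summand with the case $s_j=+\infty$, trading the indicator for a comparison between $s_j$ and the rank-dependent threshold $\sigma w_j\lambda_k$ while making the denominator essentially deterministic; the null bound above together with independence of the $s_j$'s then yields $gFDR\leq qm_0/m$. The main obstacle is precisely this last step: the monotonicity/exchangeability lemma underlying the SLOPE FDR proof must be carried over to the present weighted, chi-distributed, grouped setting, and one must verify that taking $\lambda_i$ as the \emph{maximum} over groups (rather than a group-specific threshold) is exactly what keeps the probability bound $qi/m$ valid simultaneously across all $j\in\mathcal N$, so that the rank-by-rank argument of \cite{SLOPE} ports over without loss.
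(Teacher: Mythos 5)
Your strategy coincides with the paper's own proof: standardize via $\widetilde X$, collapse gSLOPE to the $m$-dimensional weighted SLOPE problem in the group norms $s_i=\|\widetilde y_{\II_i}\|_2$ (the paper's reformulation \eqref{17022353}), observe that for null groups the $s_j/\sigma$ are independent $\Chi_{l_j}$ variables, note that taking the maximum over $j$ in the definition of $\lambda_i$ is precisely what makes the bound $\mathbb P(s_j>\sigma w_j\lambda_i)\le qi/m$ hold uniformly over all null $j$, and then run the rank-by-rank FDR computation. Up to and including the calibration step your proposal reproduces the paper's argument.

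The gap is the step you yourself label ``the main obstacle,'' and it is the mathematical heart of the theorem rather than a routine porting exercise. What is needed is the inclusion
\[
\big\{c_i^*\neq 0\ \text{and}\ R=r\big\}\ \subset\ \big\{w_i^{-1}s_i>\sigma\lambda_r\ \text{and}\ \widetilde R^i=r-1\big\},
\]
where $\widetilde R^i$ counts the discoveries of the reduced problem obtained by deleting coordinate $i$ and the first entry of $\lambda$; since $\widetilde R^i$ is a function of $(s_j)_{j\neq i}$ only, independence factors the probability and $\sum_r\frac1r\cdot\frac{qr}{m}\,\mathbb P(\widetilde R^i=r-1)\le\frac qm$. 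The paper establishes this through Lemmas \ref{lemma1}--\ref{lemma3}, and explicitly cautions that the extension from the unit-weight SLOPE case of \cite{SLOPE} is not straightforward: for the weighted problem $\min_u\ \frac12\|s-W^{-1}u\|_2^2+J_{\sigma\lambda}(u)$ the solution need no longer be ordered consistently with the data, so one must first re-derive that its support consists of the top $r$ coordinates of $W^{-1}s$ (Proposition \ref{06021048}, Corollary \ref{06021230}) before the perturbation arguments yielding the two lemmas go through. Your proposal does not supply any of this, so the proof is incomplete as written. A smaller but related imprecision: the discovery criterion must be stated conditionally on the \emph{total} number of discoveries $r$ (as in Lemma \ref{lemma2}), i.e.\ $c_i^*\neq0$ together with $R=r$ implies $w_i^{-1}s_i>\sigma\lambda_r$; your version comparing against $\lambda_k$ for $k$ the rank of group $i$ is not what the optimality conditions deliver, and it is the $\lambda_r$ form that the $\frac1r\cdot\frac{qr}{m}$ cancellation requires.
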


\begin{proof}
We will start with the standardized version of the gSLOPE problem, given by \eqref{24111029}. Based on results discussed in Appendix \ref{Sec:alt_rep}, we can consider problem 
\begin{equation}
\label{17022353}
\left\{
\begin{array}{l}
c^*=\argmin{c}\left\{\frac12\sum_{i=1}^m\big(\|\widetilde{y}_{\II_i}\|_2-w^{-1}_ic_i\big)^2+J_{\sigma\lambda}(c)\right\}\\
\|X_{I_i}\beta^\ES{gS}_{I_i}\|_2=c^*_i \big(w_i\|\widetilde{y}_{\II_i}\|_2\big)^{-1}\widetilde{y}_{\II_i} ,\quad i=1,\ldots,m
\end{array}
\right.
\end{equation}
as an equivalent formulation of \eqref{24111029} and work with the model $\widetilde{y}\sim \mathcal{N}\big(\widetilde{\beta},\ \sigma \mathbf{I}_{\widetilde{p}}\big)$, with $\y=\X^T y$ and $\widetilde{\beta}_{\II_i}=R_i\beta_{I_i}$. The uniqueness of $\XI{\beta^\ES{gS}}$ follows simply from the uniqueness of $c^*$ in \eqref{17022353}. Define random variables ${R:=\big | \big\{i:\ c_i^*\neq 0\big\}\big |}$ and ${V:=\big |\big\{i:\ \|\widetilde{\beta}_{\II_i}\|_2=0,\quad c^*_i \neq 0\big\}\big |}$. Clearly, then $Rg=R$ and $Vg=V$. Consequently, it is enough to show that
$$\mathbb{E}\left[\frac{V}{\max\{R, 1\}}\right]\leq q \cdot \frac{m_0}{m}.$$
Without loss of generality we can assume that groups $I_1,\dots,I_{m_0}$ are truly irrelevant, which gives $\|\widetilde{\beta}_{\II_1}\|_2=\ldots=\|\widetilde{\beta}_{\II_{m_0}}\|_2=0$ and $\|\widetilde{\beta}_{\II_j}\|_2>0$ for $j>m_0.$ Suppose now that $r,i$ are some fixed indices from $\{1,\ldots,m\}$. From definition of $\lambda_r$
\begin{equation}
\lambda_r\geq \frac1{w_i}F^{-1}_{\chi_{l_i}}\left(1-\frac{qr}m\right)\ \Longrightarrow\ 1-F_{\chi_{l_i}}\left(\lambda_rw_i\right)\leq\frac{qr}m.
\end{equation}
Since $\sigma^{-1}\|\widetilde{y}_{\II_i}\|_2\sim\chi_{l_i}$, for $i\leq m_0$ we have
\begin{equation}
\label{09022303}
\mathbb{P}\left(w_i^{-1}\|\widetilde{y}_{\II_i}\|_2\geq \sigma\lambda_r\right)=\mathbb{P}\left(\sigma^{-1}\|\widetilde{y}_{\II_i}\|_2\geq \lambda_rw_i\right)=1-F_{\chi_{l_i}}\left(\lambda_rw_i\right)\leq \frac{qr}m.
\end{equation}
Now, denote by $\widetilde R^i$ the number of nonzero coefficients in SLOPE estimate of \eqref{17022353} after reducing sample by excluding $i$ variable, as it was described in the statement of \ref{lemma3}. Thanks to lemmas \ref{lemma2} and \ref{lemma3}, we immediately get
\begin{equation}
\label{09022302}
\big\{\iI{\widetilde{y}}:\ c^*_i\neq 0\textrm{ and }R=r\big\}\subset\big\{\iI{\widetilde{y}}:\ w_i^{-1}\|\widetilde{y}_{\II_i}\|_2>\sigma\lambda_r\textrm{ and }\widetilde{R}^i=r-1\big\},
\end{equation}
which together with (\ref{09022302}) and (\ref{09022303}) raises
\begin{equation}
\begin{split}
\mathbb{P}(c_i^*\neq0\textrm{ and }R=r)\ \leq\ &\mathbb{P}\left(w_i^{-1}\|\widetilde{y}_{\II_i}\|_2>\sigma\lambda_r\textrm{ and }\widetilde{R}^i=r-1\right)=\\
&\mathbb{P}\left(w_i^{-1}\|\widetilde{y}_{\II_i}\|_2>\sigma\lambda_r\right) \mathbb{P}\left(\widetilde{R}^i=r-1\right)\ \leq\\
&\frac{qr}m\mathbb{P}\left(\widetilde{R}^i=r-1\right).
\end{split}
\end{equation}
Therefore
\begin{equation}\label{eq:thm_last}
\begin{split}
&\mathbb{E}\left[\frac{V}{\max\{R, 1\}} \right]=\sum_{r=1}^m\mathbb{E}\left[\frac{V}{r}\mathds{1}_{\{R=r\}}\right] = \sum_{r=1}^m\frac1r\mathbb{E}\left[\sum_{i=1}^{m_0}\mathds{1}_{\{c^*_i\neq0\}}\mathds{1}_{\{R=r\}}\right]=\\
&\sum_{r=1}^m\frac1r\sum_{i=1}^{m_0}\mathbb{P}\left(c^*_i\neq0\textrm{ and }R=r\right)\ \leq\ \sum_{i=1}^{m_0}\frac qm\sum_{r=1}^m\mathbb{P}\big(\widetilde{R}^i=r-1\big) = \frac{qm_0}m,
\end{split}
\end{equation}
which finishes the proof.
\end{proof}
In further part of this article, we will use the term basic lambdas and use the notation $\lambda^{max}$ to refer to the sequence of tuning parameters defined in Theorem \ref{gFDRcontrol}. Figure \ref{19091846}(a) illustrates the gFDR achieved by gSLOPE with design matrix $X=\mathbf{I}_p$ (hence the rank of $i$ group, $l_i$, coincides with its size) and for the sequence $\lambda^{max}$. In simulation we have fixed $5$ groups sizes from the set $\{3, 4, 5, 6, 7\}$, and for each size $200$ groups were considered, which gave $p = n = 5000$ and $m=1000$. Signal sizes were generated such that $\big(\XI{\beta}\big)_i=a\sqrt{l_i}$ for truly relevant groups, which were randomly chosen in each iteration. Parameter $a$ was selected to satisfy the condition $\frac1m\sum_{i=1}^ma\sqrt{l_i} = \frac1m\sum_{i=1}^mB(m,l_i)$, where $B(m_i,l)$ is defined in (\ref{05031641}).
\begin{figure}[ht]
\centering
\begin{subfigure}{.33\textwidth}
  \centering
	\includegraphics[width=1\linewidth]{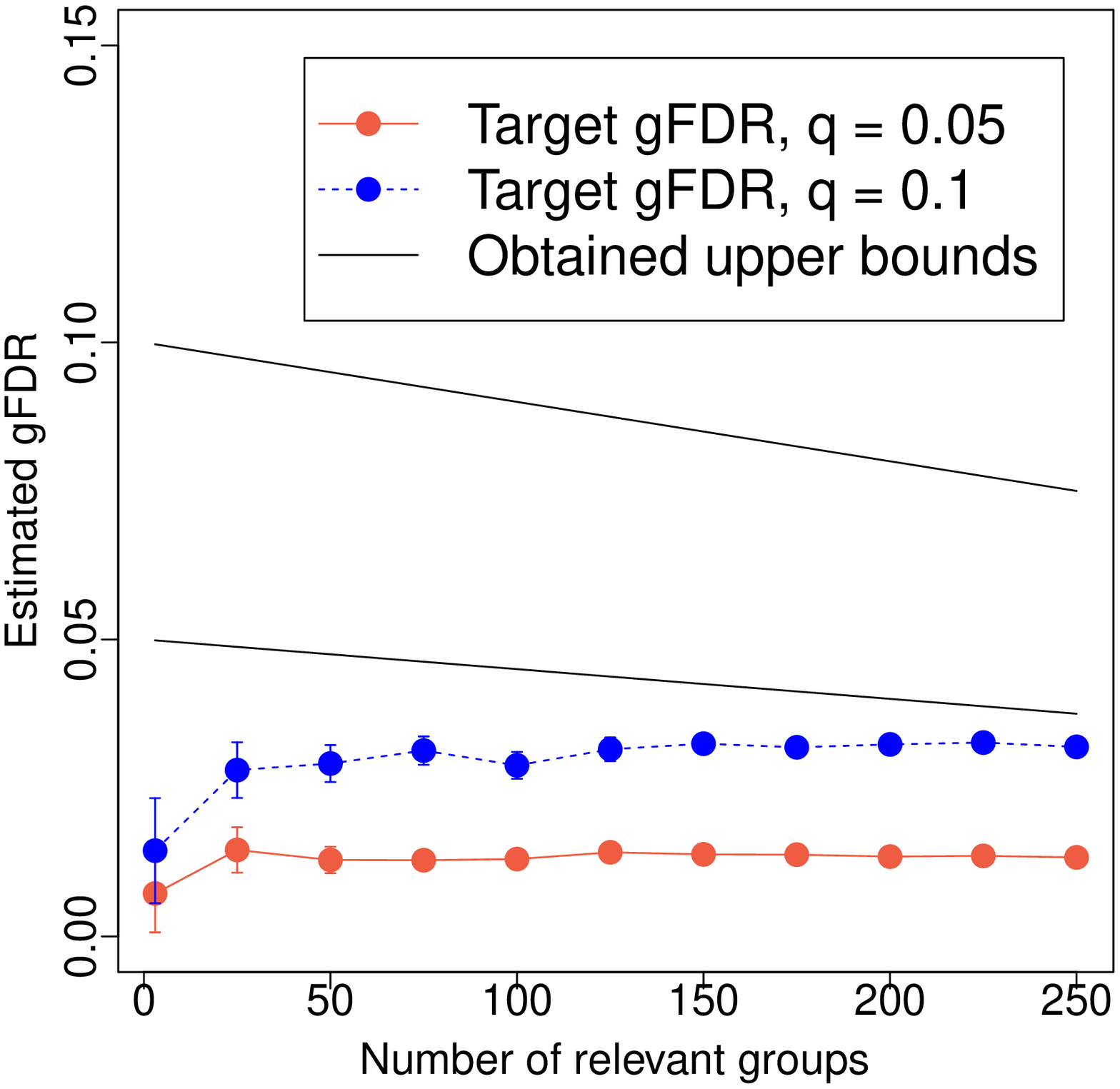}
  \caption{gFDR for $\lambda^{max}$}
\end{subfigure}%
\begin{subfigure}{.33\textwidth}
  \centering
  \includegraphics[width=1\linewidth]{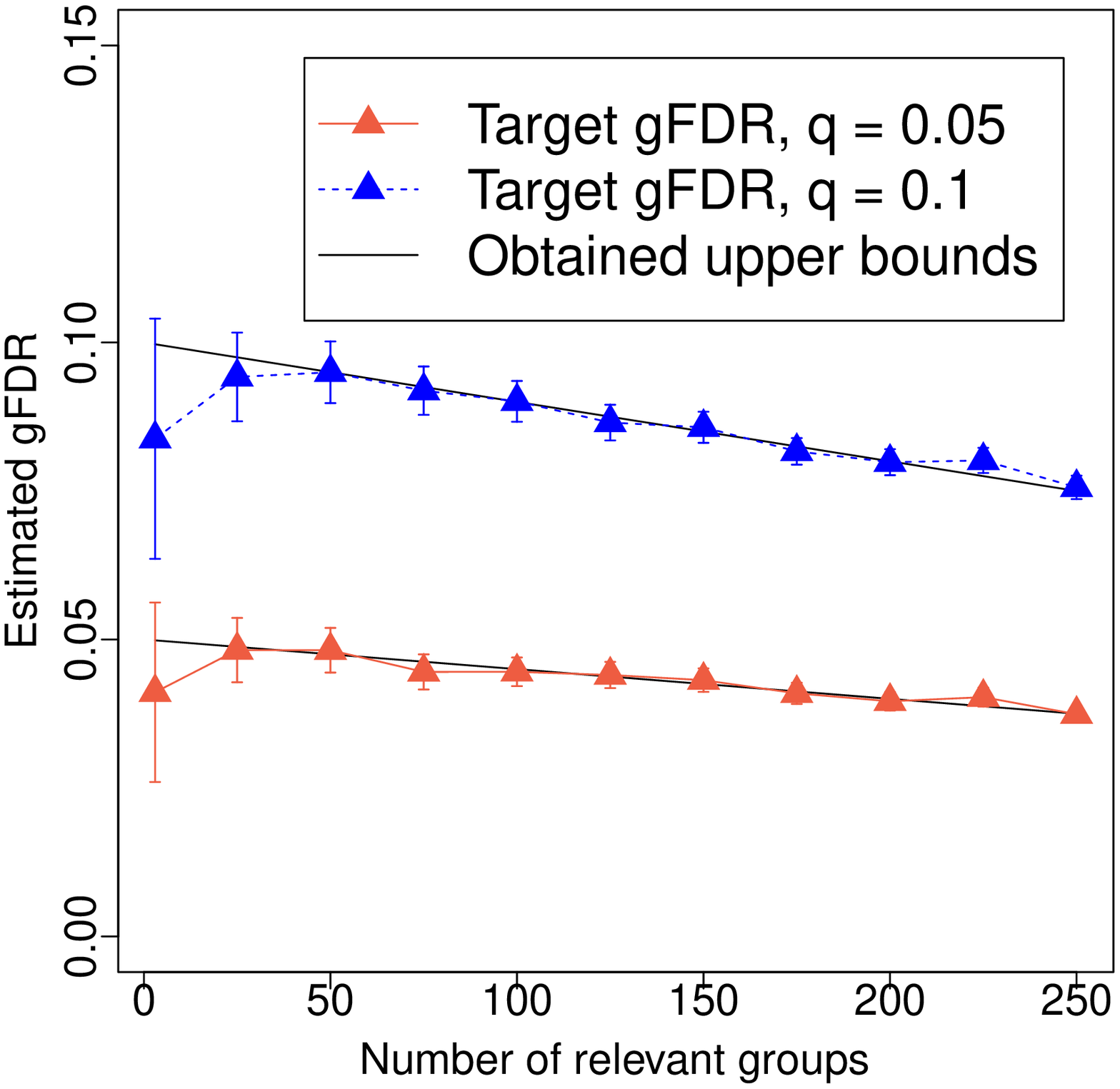}
  \caption{gFDR for $\lambda^{mean}$}
\end{subfigure}%
\begin{subfigure}{.33\textwidth}
  \centering
  \includegraphics[width=1\linewidth]{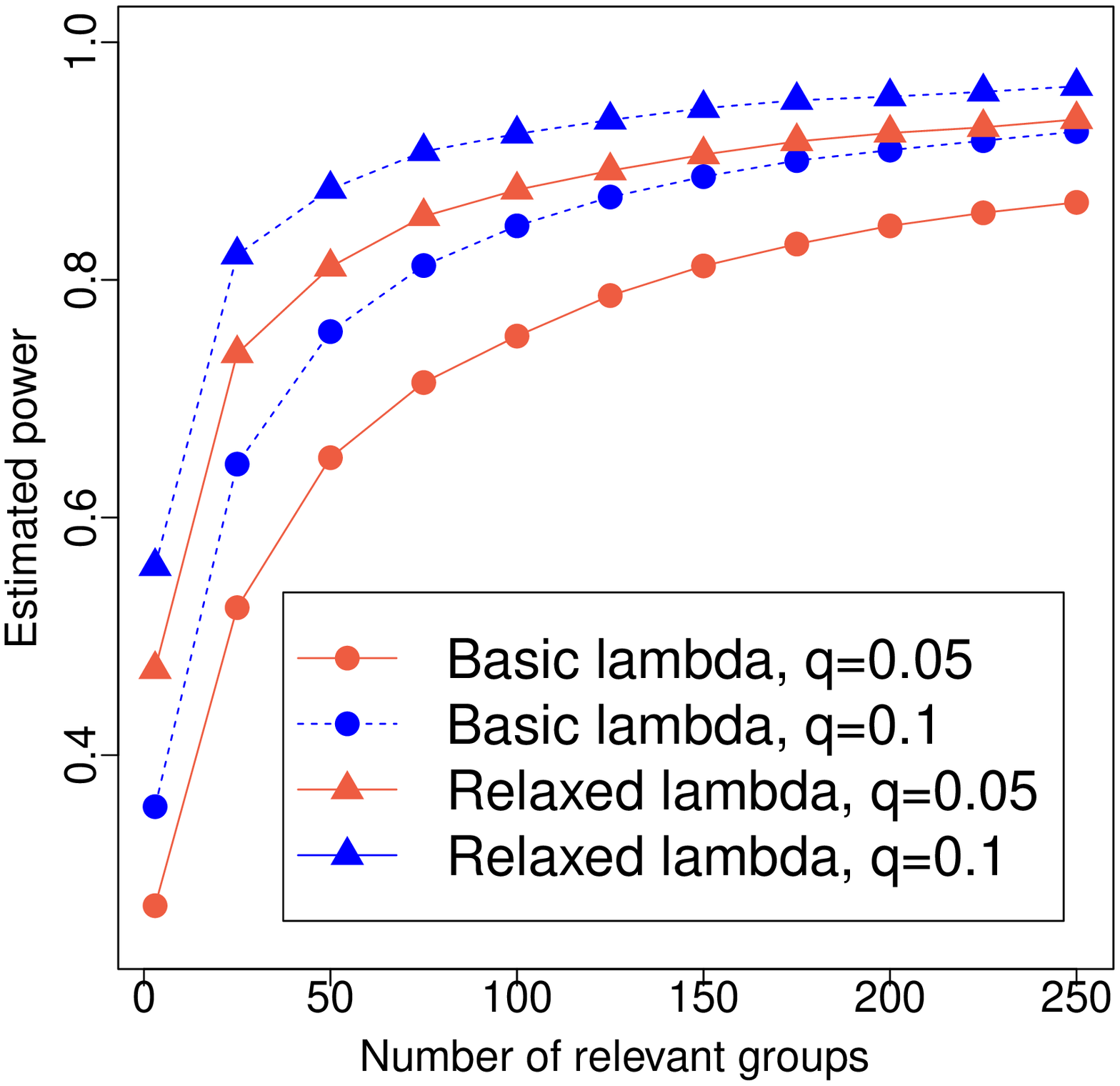}
  \caption{Power}
\end{subfigure}%
\caption{Orthogonal situation with various sizes of groups. For each target gFDR level and true support size, $300$ iterations were performed, bars correspond to $\pm 2$SE. Here, black straight lines are given by $q\cdot\big((m-k)/m\big)$, for $k$ being true support size. Weights were defined as $w_i:=\sqrt{l_i}$.}
\label{19091846}
\end{figure}
It could be observed, that the selected tuning parameters are rather conservative, i.e. the achieved gFDR is significantly lower than assumed. This suggests, that penalties (dictated by lambdas) could be slightly decreased, such as the method gets more power and still achieves the gFDR below the assumed level. Returning to the proof of Theorem \ref{gFDRcontrol}, we can see that the crucial property of sequence $\{\lambda_i\}_{r=1}^m$ is that  $1-F_{\Chi_{l_i}}\left(\lambda_rw_i\right)\leq\frac{qr}m$ for each $i$. The possible relaxation of this condition is to assume only that
\begin{equation}
\label{08121449}
\sum_{i=1}^m\Big(1-F_{w^{-1}_i\Chi_{l_i}}(\lambda_r)\Big)\leq qr.
\end{equation}
Under equal weights and equal $l_i$'s assumption, the above conditions are equivalent. In general case, however, the second approach (with the inequality replaced by equality) produces smaller penalties compared to tuning parameters given by Theorem \ref{gFDRcontrol}. Most often, such a change results in improving power (and increasing gFDR at the same time). Replacing the inequality in (\ref{08121449}) by equality yields the following strategy of choosing relaxed $\lambda$ sequence (denoted by $\lambda^{mean}$)  
\begin{equation}
\label{08191453}
\lambda^{mean}_r:= \overline{F}^{-1}\left(1-\frac{qr}m\right)\quad\textrm{for}\quad \overline{F}(x): = \frac1m\sum_{i=1}^mF_{w^{-1}_i\Chi_{l_i}}(x),\quad r\in\{1,\ldots,m\},
\end{equation}
where $F_{w^{-1}_i\Chi_{l_i}}$ is the cumulative distribution function of scaled chi distribution with $l_i$ degrees of freedom and scale $\mathcal{S}=w_i^{-1}$. In Figure \ref{19091846}b we present estimated gFDR, for tuning parameters given by (\ref{08191453}). The results suggest that with relaxed version of tuning parameters, we can still achieve the gFDR control while essentially improving the power of gSLOPE. Such a strategy could be especially important in situation, when differences between the smallest and the largest quantiles (among distributions $w_i^{-1}\Chi_{l_i}$) are relatively large. When this is the case, the gSLOPE with lambdas given by Theorem \ref{gFDRcontrol} in orthogonal situation could be considered as too conservative. 

%%%%%%%%%%%%%%%%%%%%%%%%%%%%%%%%%%%%%%%%%%%%%%%%%%%%%%%%%%%%%%%%%%%%%%%%%%%%%%%%%%%%%%%%%%%%%%%%%%%%%%%%%%%%%%%%%%%%%%%%%%%%%%%%%%%%%%%%%%
Up until this point, we have only considered the testing properties of gSLOPE. Though originally proposed to control the FDR, surprisingly, SLOPE enjoys appealing estimation properties as well \cite{WE}. It thus would be desirable to extend this link between testing and estimation for gSLOPE. In measuring the deviation of an estimator from the ground truth $\beta$, as earlier, we focus on the group level instead of an individual. Accordingly, here we aim to estimate $\llbracket \beta\rrbracket_{X,I}: = \big(\|X_{I_1} \beta_{I_1}\|_2, \ldots, \|X_{I_m} \beta_{I_m}\|_2\big)^\mathsf{T} $ or $\iI{\widetilde{\beta}}: = \big(\|\widetilde{\beta}_{\II_1}\|_2, \ldots, \|\widetilde{\beta}_{\II_m}\|_2\big)^\mathsf{T} $, equivalently. For illustration purpose, we employ the setting described as follows. Imagine that we have a sequences of problems with the number of groups $m$ growing to infinity: the design $X$ is orthonormal at groups level; ranks of submatrices $X_{I_i}$, $l_i$, are bounded, that is, $\max l_i \le l$ for some constant integer $l$; denoting by $k \ge 1$ the sparsity level (that is, the number of relevant groups), we assume the asymptotic $k/m \rightarrow 0$. Now we state our minimax theorem, where we write $a \sim b$ if $a/b \rightarrow 1$ in the asymptotic limit, and $\|\XI{\beta}\|_0$ denotes the number of nonzero entries of $\XI{\beta}$. The proof makes use of the same techniques for proving Theorem 1.1 in \cite{WE} and is deferred to the Appendix.
\begin{theorem}\label{minimax}
Fix any constant $q \in (0, 1)$, let $w_i = 1$ and $\lambda_i = F_{\chi_l}^{-1}(1 - qi/m)$ for $i=1,\ldots,m$. Under the preceding conditions, gSLOPE is asymptotically minimax over the nearly black object $\big\{\beta: \big\|\XI{\beta}\big\|_0 \le k\big\}$, i.e.,
\[
\sup_{\|\XI{\beta}\|_0 \le k} \mathbb{E} \left( \Big\| \XI{\beta^\ES{gS}} - \XI{\beta} \Big\|_2^2\right) \sim  \inf_{\widehat\beta}\sup_{\|\XI{\beta}\|_0 \le k} \mathbb{E} \left( \Big\| \XI{\widehat\beta} - \XI{\beta} \Big\|_2^2\right),
\] 
where the infimum is taken over all measurable estimators $\widehat\beta(y, X)$.
\end{theorem}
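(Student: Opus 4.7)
The plan is to reduce the gSLOPE estimation problem to a one-dimensional sorted-$\ell_1$ proximal computation on a vector of group norms, and then transfer the Gaussian-SLOPE analysis of \cite{WE} to this chi-noise setting. Using the standardized reformulation \eqref{24111029} and the representation employed in the proof of Theorem \ref{gFDRcontrol}, the $i$th entry of $\XI{\beta^{\ES{gS}}}$ equals $c^*_i$, where
\[
c^* \;=\; \argmin{c \in \mathbb R^m}\Bigl\{\tfrac12\sum_{i=1}^m(t_i-c_i)^2 + \sigma J_\lambda(c)\Bigr\}, \qquad t_i := \|\widetilde y_{\II_i}\|_2,
\]
and the $i$th entry of $\XI{\beta}$ equals $\theta_i := \|\widetilde\beta_{\II_i}\|_2$, so the squared loss is $\|c^* - \theta\|_2^2$. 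On the infimum side, rotation invariance within each group $\II_i$ makes $(t_i)_{i=1}^m$ sufficient for $\theta$, reducing the problem to estimating a $k$-sparse $\theta \in \mathbb R_{\ge 0}^m$ from $m$ independent $\sigma$-scaled noncentral $\chi_{l_i}$ observations; the $\sim$ in the theorem then amounts to showing that both the gSLOPE risk and the minimax risk are asymptotic to $2\sigma^2 k\log(m/k)$.

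Two analytic facts make the Gaussian argument of \cite{WE} portable: (i) by $1$-Lipschitzness of $y \mapsto \|y\|_2$ and Gaussian concentration, $t_i - \theta_i$ is sub-Gaussian at scale $\sigma$ with a deterministic bias $|\mathbb{E} t_i - \theta_i| \le \sigma\, \mathbb{E}\chi_l$ that is uniformly bounded and vanishes as $\theta_i/\sigma \to \infty$; (ii) for fixed $l$ and $i = o(m)$,
\[
\lambda_i \;=\; F_{\chi_l}^{-1}\!\bigl(1-qi/m\bigr) \;\sim\; \sqrt{2\log(m/i)},
\]
so $\sum_{i=1}^k \lambda_i^2 \sim 2k\log(m/k)$, matching the penalty shape used in the Gaussian SLOPE minimax proof.

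For the upper bound I would split $\|c^*-\theta\|_2^2 = \sum_{i \in S}(c_i^*-\theta_i)^2 + \sum_{i \notin S}(c_i^*)^2$ with $S = \{i : \theta_i > 0\}$, $|S| \le k$. The signal sum is controlled by nonexpansiveness of the sorted-$\ell_1$ prox together with $\sum_{i=1}^k\lambda_i^2 \sim 2k\log(m/k)$, following the upper-bound argument of \cite{WE} essentially verbatim after substituting the sub-Gaussian tails of (i) for the Gaussian tails used there. The noise sum is handled via the KKT characterization of the prox: because the quantile function of $\chi_l$ matches the penalty definition, the top order statistics of the null $t_i$'s are dominated by $\sigma\lambda_i$ with high probability, so the prox shrinks them and the noise contribution is also $O\bigl(\sigma^2 k\log(m/k)\bigr)$. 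For the matching lower bound I would invoke the standard least-favorable-prior construction used in \cite{WE}: set $\widetilde\beta_{\II_i} = \mu\, e_1$ with $\mu = \sigma\sqrt{2\log(m/k)}(1-\varepsilon)$ on a uniformly random $k$-subset of indices and $0$ elsewhere, and reduce to $m$ independent hypothesis tests on the sufficient statistics $t_i$.

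The main obstacle is the noise part of the upper bound. Unlike the Gaussian case, $t_i$ has a strictly positive mean $\sigma\, \mathbb{E}\chi_{l_i}$ under the null, so null observations do not concentrate around $0$. The hypothesis $\max l_i \le l$ with $l$ fixed is what keeps this bias a uniformly bounded constant, asymptotically negligible against the thresholds $\sigma\lambda_i$ for $i \ll m$; however, verifying that it does not accumulate over the $m-k$ null coordinates requires a sharper ordered-statistics argument than the direct Gaussian symmetry used in \cite{WE}.
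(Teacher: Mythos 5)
Your overall architecture matches the paper's: reduce to the sorted-$\ell_1$ prox applied to the vector of group norms $t_i=\|\widetilde y_{\II_i}\|_2$, get the lower bound from a one-coordinate-per-group (equivalently, least-favorable-prior) reduction to the classical sparse Gaussian sequence problem with risk $(1+o(1))2k\log(m/k)$, and split the upper bound into a signal part controlled by prox nonexpansiveness plus $\sum_{i\le k}\lambda_i^2\sim 2k\log(m/k)$ and a noise part over the null coordinates. The signal part as you sketch it is fine (the paper uses the cruder deterministic bound $|\zeta_i-\mu_i|\le 2\|\xi_i\|_2$, giving an additive $4lk=o(k\log(m/k))$, rather than sub-Gaussian concentration, but either works).

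The genuine gap is the off-support term, and it is twofold. First, your stated target for it, $O\bigl(\sigma^2 k\log(m/k)\bigr)$, is not strong enough: exact asymptotic minimaxity ($\sim$, ratio tending to $1$) requires $\mathbb{E}\sum_{i>k}\widehat\zeta_i^2 = o\bigl(k\log(m/k)\bigr)$, since the minimax risk itself is $(1+o(1))2k\log(m/k)$; an $O(\cdot)$ bound only yields minimaxity up to a constant. Second, the step you yourself flag as unverified is precisely where the bulk of the paper's work lies: it bounds $\sum_{i>k}\widehat\zeta_i^2\le\sum_{i=1}^{m-k}(\zeta_{(i)}-\lambda_{k+i})_+^2$ and splits the index range into $[1,Ak]$, $[Ak,\alpha m]$, $[\alpha m, m-k]$, handling each with a separate lemma. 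The top range uses the quantile representation $\zeta_{(i)}=F^{-1}_{\chi_l}(1-U_{(i)})$ together with exact Beta-moment computations for $\mathbb{E}\log U_{(i)}$ and $\mathbb{E}\log^2 U_{(i)}$; the middle range needs a $\chi_l$ analogue of the Gaussian tail-ratio bound $\alpha_u\le \mathrm{e}^{-\lambda_{k+i}u/2}\alpha_0$, which is exactly where the nonzero null mean you worry about enters and where the hypothesis $l/\lambda_{k+\lfloor\alpha m\rfloor}<1/2$ (i.e., $l$ fixed) is used; and the bulk range exploits the exact match between $\lambda_{k+i}$ and the $\chi_l$ quantiles to reduce $(\zeta_{(i)}-\lambda_{k+i})_+$ to $(q(k+i)/m-U_{(i)})_+$ and then to a binomial tail controlled by a Chernoff/Kullback--Leibler bound, which is what prevents accumulation over the $m-k$ nulls. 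Without these three estimates the proof is incomplete; "dominated by $\sigma\lambda_i$ with high probability" does not by itself control the expectation of the sum to the required $o(k\log(m/k))$ order.
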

Notably, in this theorem the choice of $\lambda_i$ does not assume the knowledge of sparsity level. Or putting it differently, in stark contrast to gLASSO, gSLOPE is adaptive to a range of sparsity in achieving the exact minimaxity. Combining Theorems~\ref{gFDRcontrol} and \ref{minimax}, we see the remarkable link between FDR control and minimax estimation also applies to gSLOPE \cite{ABDJ, WE}. While it is out of the scope of this paper, it is of great interest to extend this minimax result to general design matrices.

\subsection{The impact of chosen weights}
In this subsection we will discuss the influence of chosen weights, $\{w_i\}_{i=1}^m$, on results. Let $I=\{I_1,\ldots,I_m\}$ be given division into groups and $l_1,\ldots,l_m$ be ranks of submatrices $X_{I_i}$. Assume the orthogonality at groups level, i.e. that it holds $X_{I_i}^\mathsf{T}X_{I_j}=0$, for $i\neq j$, and suppose that $\sigma=1$. The support of $\XI{\beta}$ coincides with the support of vector $c^*$ defined in \eqref{17022353}, namely 
\begin{equation}
c^* = \argmin{c}\ \frac12\Big\|\iI{\y}-W^{-1}c\Big\|_2^2+J_{\lambda}(c),
\end{equation} 
where $W^{-1}$ is diagonal matrix with positive numbers $w_1^{-1},\ldots,w_m^{-1}$ on diagonal. Suppose now, that $c^*$ has exactly $r$ nonzero coefficients. From Corollary \ref{06021230}, these indices are given by $\{\pi(1),\ldots,\pi(r)\}$, where $\pi$ is permutation which orders $W^{-1}\iI{\y}$. Hence, the order of realizations $\big\{w_i^{-1}\|\widetilde{y}_{\II_i}\|_2\big\}_{i=1}^m$ decides about the subset of groups labeled by gSLOPE as relevant. Suppose that groups $I_i$ and $I_j$ are truly relevant, i.e $\|\Beta_{\II_i}\|_2>0$ and $\|\Beta_{\II_j}\|_2>0$. If we want to achieve the situation in which subset of truly discovered groups is not significantly affected by $l_i$, we should choose weights such as $w_i^{-1}\|\y_{\II_i}\|_2$ and $w_j^{-1}\|\y_{\II_j}\|_2$ are ''comparable''. One sensible strategy is to look at this issue from the side of expected values. The distributions of $\|\y_{\II_i}\|_2$ and $\|\y_{\II_j}\|_2$ are noncentral $\Chi$ distributions, with $l_i$ and $l_j$ degrees of freedom, and the noncentrality parameters equal to $\|\Beta_{\II_i}\|_2$ and $\|\Beta_{\II_j}\|_2$, respectively. Now, the expected value of the noncentral $\Chi$ distribution could be well approximated by the square root of the expected value of the noncentral $\Chi^2$ distribution, which gives
$$\mathbb{E}\big(w_i^{-1}\|\y_{\II_i}\|_2\big)\approx w_i^{-1}\sqrt{\mathbb{E}\big(\|\y_{\II_i}\|^2_2\big)}=w_i^{-1}\sqrt{l_i+\|\Beta_{\II_i}\|_2^2}.$$
Therefore, roughly speaking, truly relevant groups $I_i$ and $I_j$ are treated as comparable, when it occurs ${l_i/w_i^2+\|\Beta_{\II_i}\|_2^2/w_i^2\approx l_j/w_j^2+\|\Beta_{\II_j}\|_2^2/w_j^2}$. This gives us the intuition about the behavior of gSLOPE with the choice $w_i=\sqrt{l_i}$ for each $i$: gSLOPE treats two truly relevant groups as comparable, if groups effect sizes satisfy the condition $\big(\XI{\beta}\big)_i/\big(\XI{\beta}\big)_j\approx\sqrt{l_i}/\sqrt{l_j}$. The derived condition could be recast as $\|X_{I_i}\beta_{I_i}\|_2^2/l_i \approx \|X_{I_j}\beta_{I_j}\|_2^2/l_j$. This gives a nice interpretation: with the choice $w_i:=\sqrt{l_i}$, under orthogonality at groups level and with linear independence of columns inside groups, gSLOPE treats two groups as comparable, when these groups have similar squared effect group sizes per coefficient. One possible idealistic situation, when such a property occurs, is when all truly relevant variables have the same impact on the response and were divided into groups containing either all truly relevant or all truly irrelavant regressors.

In Figure \ref{21090909} (results for previously described simulations performed for orthogonal situation and various groups sizes) we see that when the condition $\big(\XI{\beta}\big)_i/\big(\XI{\beta}\big)_j=\sqrt{l_i}/\sqrt{l_j}$ is met, the fractions of groups with different sizes in the selected truly relevant groups (STRG) are approximately equal. To investigate the impact of selected weights on the set of discovered groups, we performed simulations with different settings, namely we used $w_i=1$ and $w_i = l_i$ (without changing other parameters). With the first choice, larger groups are penalized less than before, while the second choice yields the opposite situation. This is reflected in the proportion of each groups in STRG (Figure \ref{21090909}). 
\begin{figure}[ht]
\centering
\begin{subfigure}{.33\textwidth}
  \centering
	\includegraphics[width=1\linewidth]{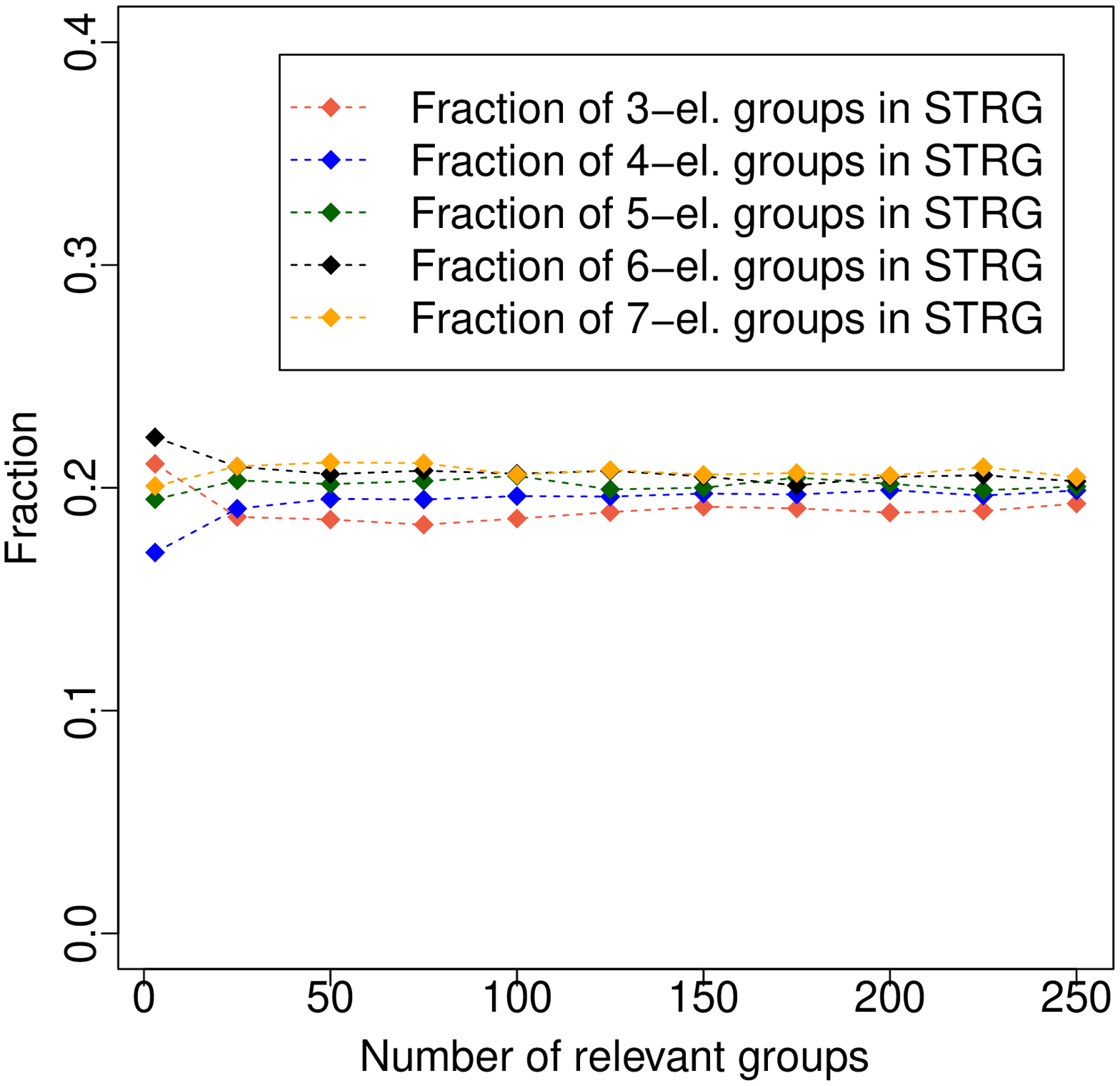}
  \caption{Structure of STRG, $w_i:=\sqrt{l_i}$}
\end{subfigure}%
\begin{subfigure}{.33\textwidth}
  \centering
  \includegraphics[width=1\linewidth]{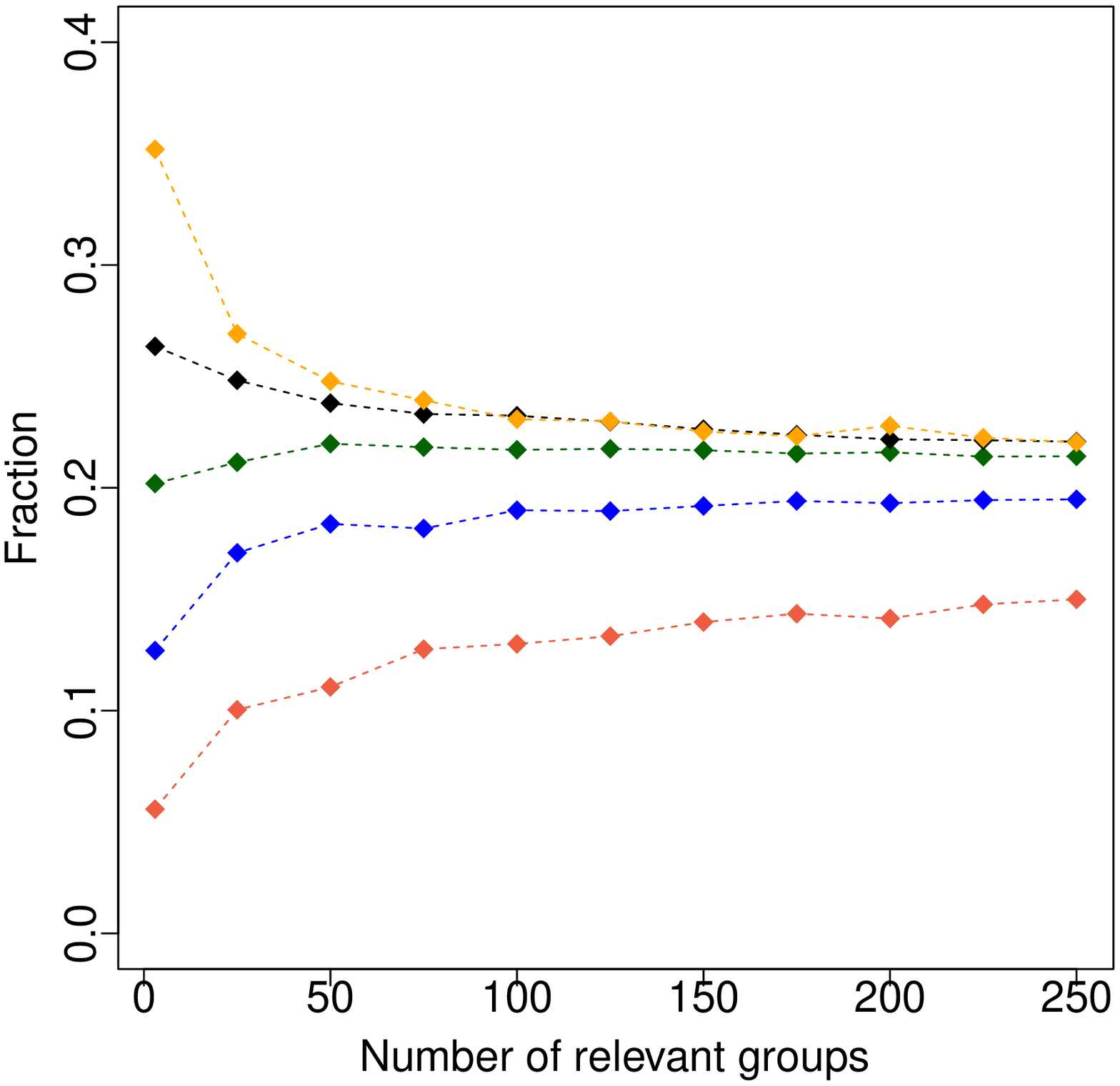}
  \caption{Structure of STRG, $w_i:=1$}
\end{subfigure}%
\begin{subfigure}{.33\textwidth}
  \centering
  \includegraphics[width=1\linewidth]{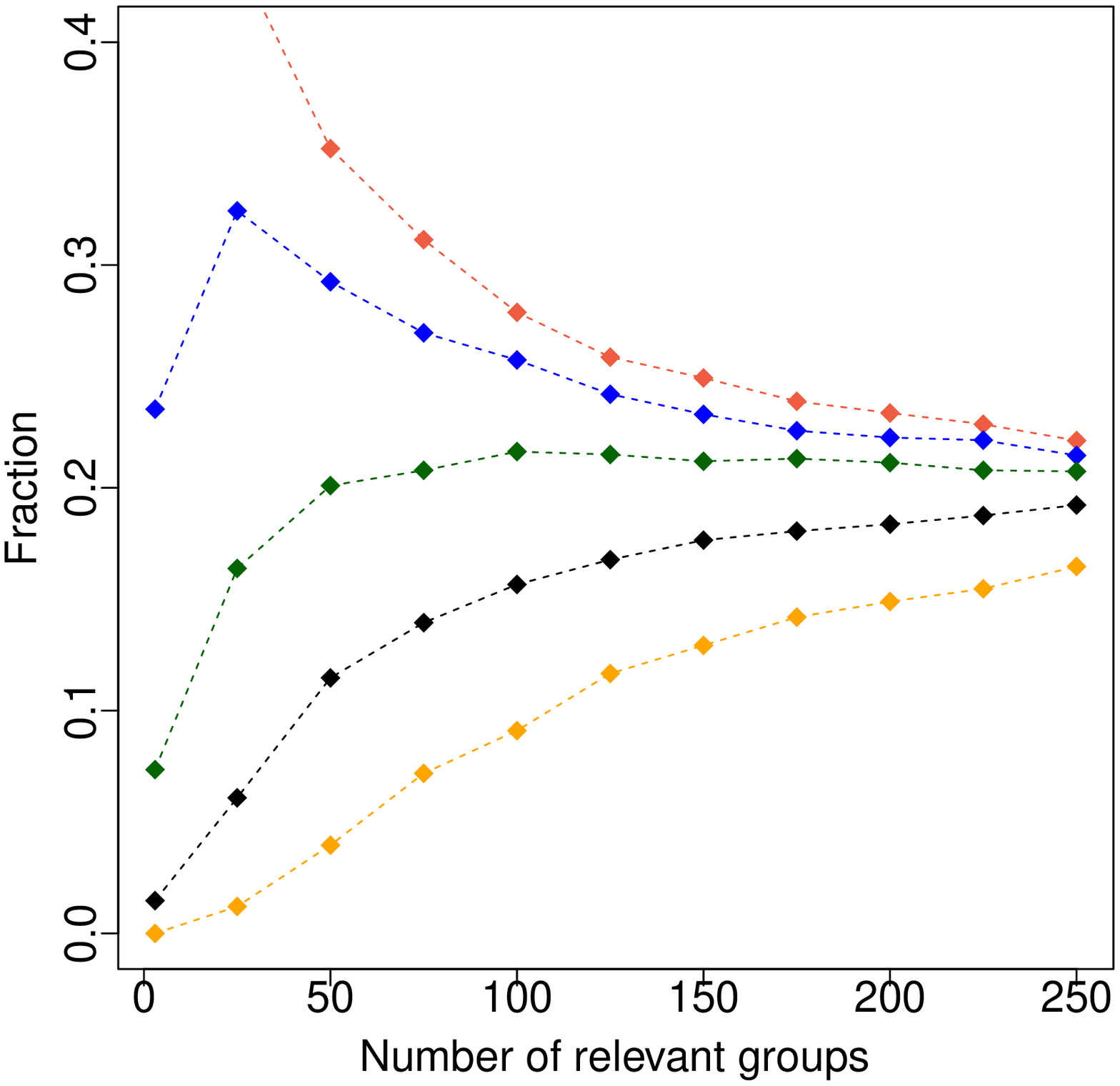}
  \caption{Structure of STRG, $w_i:=l_i$}
\end{subfigure}%
\caption{Fraction of each group sizes in STRG. Beyond the weights, this simulation was conducted with the same setting as in experiments summarized in Figure 3 for $\lambda^{mean}$. In particular, for truly relevant groups $i$ and $j$, it occurs $\big(\XI{\beta}\big)_i/\big(\XI{\beta}\big)_j=\sqrt{l_i}/\sqrt{l_j}$.}
\label{21090909}
\end{figure}
The values of gFDR are very similar under all choices of weights. Consequently, we are equipped in the entire family of settings, according to the rule: fix weights to induce the comparability in the domain of group effect strengths for different group sizes, and select lambdas to control gFDR for given weights. 

\subsection{Near-orthogonal situation}
In this section we will deal with the case in which the columns in the design matrix are only ''almost orthogonal'', which could be valid for real-world applications. To simulate such a situation we will assume that $n$ by $p$ design matrix is a realization of the random matrix with independent entries drawn from the normal distribution, $\mathcal{N}\big(0,\frac 1n\big)$, so as the expected value of $X_i\T{T}X_j$ is equal to $1$ for $i = j$, and equal to $0$ otherwise. The main objective is to derive the lambda sequence, which could be applied to achieve gFDR control under assumption that the $\XI{\beta}$ is sparse. In this subsection we will confine ourselves only to the case $l_1=\ldots=l_m:=l$, $w_1=\ldots=w_m:=w$ and when the number of elements in each group is relatively small as compared to the number of observations ($l<<n$). For simplicity in this subsection we will fix $\sigma = 1$. In case when $\sigma \neq 1$, the proposed sequence lambda should be multiplied by $\sigma$, as in expression (\ref{gSLOPE}).In the heuristic presented in this subsection, we will use the notation $A \approx B$, in order to express that with large probability the differences between corresponding entries of matrices $A$ and $B$ are very small. 

In situation when entries of $X$ comes from $\mathcal{N}\big(0,\frac 1n\big)$ distribution and sizes of groups are relatively small, very good approximation of $\beta^\ES{gS}$ could be obtained by $\hat{\beta}$, defined as
\begin{equation}
\label{gSLOPE_1}
\hat{\beta}: = \argmin b\ \ \bigg\{\frac 12\Big\|y-Xb\Big\|_2^2+\sigma J_{\lambda}\Big( W\I{b}\Big)\bigg\}.
\end{equation}
Assume for simplicity that $\|\beta_{I_1}\|_2>\ldots> \|\beta_{I_s}\|_2 >0$, $\|\beta_{I_j}\|_2=0$ for $j>s$, $\hat{\beta}$ satisfies the same conditions for some $\lambda$ (this implies in particular, that true signals are relatively strong) and the true model is sparse. Divide $I$ into two families of sets $I^s:=\{I_1,\ldots,I_s\}$ and $I^c:=\{I_{s+1},\ldots,I_m\}$. To derive optimality condition for $\hat{\beta}$ we will prove the following
\begin{theorem}
\label{27111155}
Let $b\in\mathbb{R}^p$ be such that $\|b_{I_1}\|_2>\ldots>\|b_{I_s}\|_2>0$, $\|b_{I_j}\|_2=0$ for $j>s$ and denote $\lambda^c:=(\lambda_{s+1},\ldots,\lambda_m)\T{T}$. If $g\in \partial J_{\lambda}\big(w\I{b}\big)$, then it holds:
\begin{equation}
\left\{
\begin{array}{l}
g_{I_i}=w\lambda_i\frac{b_{I_i}}{\|b_{I_i}\|_2},\ i=1,\ldots, s\\
\llbracket g\rrbracket_{I^c}\in C_{w\lambda^c}
\end{array}
\right.,
\end{equation}
where the set $C_{\lambda}$ (here with $w\lambda^c$ instead of $\lambda$) is defined in appendix (\ref{subsec:app26112110}).
\end{theorem}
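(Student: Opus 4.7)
The plan is to compute the subdifferential of $f(b):=J_{\lambda}\bigl(w\I{b}\bigr)$ at the given $b$ by first writing a clean local representation of $f$ near $b$ and then invoking the directional-derivative characterization of the subdifferential: $g\in\partial f(b)\iff g\T{T}h\leq f'(b;h)$ for every direction $h\in\mathbb{R}^{p}$.

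First I would use the strict inequalities $\|b_{I_1}\|_2>\cdots>\|b_{I_s}\|_2>0$ together with continuity of the Euclidean norm to argue that for $h$ in a small neighbourhood of $0$ the ordering of $\bigl\{\|b_{I_i}+h_{I_i}\|_2\bigr\}_{i=1}^{s}$ remains the identity permutation and each of those $s$ quantities still strictly exceeds every $\|h_{I_j}\|_2$ with $j>s$. This yields the local decomposition
\begin{equation*}
f(b+h)\;=\;w\sum_{i=1}^{s}\lambda_i\,\|b_{I_i}+h_{I_i}\|_2\;+\;J_{w\lambda^{c}}\bigl(\llbracket h\rrbracket_{I^{c}}\bigr),
\end{equation*}
since the sorted-$\ell_1$ terms carrying the coefficients $\lambda_{s+1},\ldots,\lambda_m$ are fed entirely by the norms $\|h_{I_j}\|_2$ for $j>s$. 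Replacing $h$ by $th$, dividing by $t>0$ and letting $t\downarrow 0$ (exploiting positive homogeneity of $J_{w\lambda^{c}}$ and differentiability of $\|\cdot\|_2$ away from the origin) gives the directional derivative
\begin{equation*}
f'(b;h)\;=\;w\sum_{i=1}^{s}\lambda_i\,\frac{b_{I_i}\T{T}h_{I_i}}{\|b_{I_i}\|_2}\;+\;J_{w\lambda^{c}}\bigl(\llbracket h\rrbracket_{I^{c}}\bigr).
\end{equation*}

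Next I would probe the subgradient criterion $g\T{T}h\leq f'(b;h)$ with two families of test directions. Taking $h$ supported on a single block $I_i$ with $i\leq s$, the second term of $f'(b;h)$ vanishes and the first is linear in $h_{I_i}$; applying the inequality to both $h$ and $-h$ pins down $g_{I_i}=w\lambda_i\,b_{I_i}/\|b_{I_i}\|_2$. Taking $h$ supported on $I^{c}$ leaves the scalar condition
\begin{equation*}
\sum_{i>s}g_{I_i}\T{T}h_{I_i}\;\leq\;J_{w\lambda^{c}}\bigl(\llbracket h\rrbracket_{I^{c}}\bigr).
\end{equation*}
Maximizing the left-hand side over each $h_{I_i}$ of fixed Euclidean length (Cauchy-Schwarz is sharp when $h_{I_i}\parallel g_{I_i}$) reduces this to the requirement that $\sum_{i>s}\|g_{I_i}\|_2\,v_i\leq J_{w\lambda^{c}}(v)$ for every nonnegative $v\in\mathbb{R}^{m-s}$, which is exactly the dual sorted-$\ell_1$ ball condition that defines $C_{w\lambda^{c}}$ in the appendix, i.e.\ $\llbracket g\rrbracket_{I^{c}}\in C_{w\lambda^{c}}$. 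Sufficiency is then obtained by running these estimates in reverse: the standard $\ell_2$ subgradient inequality on each nonzero block, combined with the dual-ball condition and Cauchy-Schwarz on the zero blocks, reassembles $g\T{T}h\leq f'(b;h)$ for every $h$.

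The main obstacle is to justify the local decomposition of $J_{\lambda}\bigl(w\I{b+h}\bigr)$ rigorously: the strict-inequality hypothesis must be used carefully so that the first $s$ coordinates remain permanently dominant (and hence act as smooth norms at $b$), while the last $m-s$ coordinates, all collapsed to zero at $b$, contribute the separate sorted-norm term $J_{w\lambda^{c}}\bigl(\llbracket h\rrbracket_{I^{c}}\bigr)$. Once this clean split is in hand, everything else is routine duality between the sorted $\ell_1$ norm and its unit dual ball $C_{w\lambda^{c}}$.
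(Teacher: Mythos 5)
Your proposal is correct and follows essentially the same route as the paper: localize around $b$ using the strict ordering so that $J_{\lambda}\bigl(w\I{b+h}\bigr)$ splits into a smooth part over the nonzero blocks plus a sorted-$\ell_1$ term in $\llbracket h\rrbracket_{I^c}$, then test with perturbations supported on a single relevant block (giving $g_{I_i}=w\lambda_i b_{I_i}/\|b_{I_i}\|_2$ by differentiability of $\|\cdot\|_2$ away from the origin) and with perturbations supported on $I^c$ (giving the dual-ball condition $\llbracket g\rrbracket_{I^c}\in C_{w\lambda^c}$). The only cosmetic differences are that you phrase the localization via directional derivatives where the paper uses its Corollary on subgradient inequalities over open neighbourhoods of zero, and you extract the dual-ball condition by direct maximization where the paper invokes the conjugate of the grouped sorted norm; the sufficiency direction you sketch is extra, as the theorem only asserts the forward implication.
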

\begin{proof}
For $b\in \mathbb{R}^p$ define $J_{\lambda,I}(b):=J_{\lambda}\big(\I{b}\big)$ and put
$H:=\big\{h\in\mathbb{R}^p:\ \|(b+h)_{I_1}\|_2>\ldots>\|(b+h)_{I_s}\|_2,\ \|(b+h)_{I_s}\|_2>\|h_{I_j}\|_2,\ j>s\big\}.$
If $g\in\partial J_{\lambda,I}(b)$, then for all $h\in H$ from definition of subgradient it holds
\begin{equation}
\label{06191517}
\sum_{i=1}^s\lambda_i\|(b+h)_{I_i}\|_2+\sum_{i=s+1}^m\lambda_i\big(\I{b+h}\big)_{(i)}\geq\sum_{i=1}^s\lambda_i\|b_{I_i}\|_2 + \sum_{i=1}^sg_{I_i}\T{T}h_{I_i}+(g^c)\T{T}h^c,
\end{equation}
for $g^c:=(g_{I_{s+1}}\T{T},\ldots,g_{I_m}\T{T})\T{T}$ and $h^c:=(h_{I_{s+1}}\T{T},\ldots,h_{I_m}\T{T})\T{T}$. Define $\widetilde{I}:=\big\{\widetilde{I}_1,\ldots, \widetilde{I}_{m-s}\big\}$, with set $\widetilde{I}_i:=\big\{(i-1)\cdot l +1,\ldots, i\cdot l\big\}$. Then $\llbracket g^c\rrbracket_{\widetilde{I}}= \llbracket g\rrbracket_{I^c}$. Consider first case, when $h$ belongs to the set $H^c:=\{h\in H:\ h_{I_i}\equiv0,\ i\leq s\}$. This yields
\begin{equation}
\label{06191400}
\sum_{i=1}^{m-s}\lambda_{s+i}\big(\llbracket h^c\rrbracket_{\widetilde{I}}\big)_{(i)}\geq (g^c)\T{T}h^c.
\end{equation}
Since $\{h^c:\ h\in H^c\}$ is open in $\mathbb{R}^{l(m-s)}$ and contains zero, from Corollary \ref{06182058} we have that $g^c\in \partial J_{\lambda^c,\widetilde{I}}(0)$ and the inequality (\ref{06191400}) is true for any $h^c\in\mathbb{R}^{l(m-s)}$ yielding  
\begin{equation}
0\geq \sup_{h^c}\Big\{(g^c)\T{T}h^c -J_{\lambda^c,\widetilde{I}}(h^c)\Big\} = J_{\lambda^c,\widetilde{I}}^*(g^c) = \left\{\begin{array}{cl}
0,&\llbracket g^c\rrbracket_{\widetilde{I}}\in C_{\lambda^c}\\
\infty,&\textrm{otherwise}
\end{array}\right.,
\end{equation}
see Proposition \ref{07071055}. This result immediately gives condition $\llbracket g^c\rrbracket_{\widetilde{I}}\in C_{\lambda^c}$, which is equivalent with $\llbracket g\rrbracket_{I^c}\in C_{\lambda^c}$. To find conditions for $g_{I_i}$ with $i\leq s$, define sets $H_i:=\{h\in H:\ h_{I_j}\equiv0,\ j\neq i\}$. For $h\in H_i$, (\ref{06191517}) reduces to $\lambda_i\|b_{I_i}+h_{I_i}\|_2\geq \lambda_i\|b_{I_i}\|_2+g_{I_i}\T{T}h_{I_i}$. Since the set $\{h_{I_i}:\ h\in H_i\}$ is open in $\mathbb{R}^l$ and contains zero, from Corollary \ref{06182058} we have $g_{I_i}\in \partial f_i(b_{I_i})$ for $f_i:\mathbb{R}^l\longrightarrow\mathbb{R}$, $f_i(x):=\lambda_i\|x\|_2$. Since $f_i$ is convex and differentiable in $b_{I_i}$, it holds $g_{I_i}=\lambda_i\frac{b_{I_i}}{\|b_{I_i}\|_2}$, which finishes the proof.
\end{proof}
\noindent The above theorem allows to write the optimality condition for $\hat{\beta}$ in form 
\begin{equation}
\label{24031126}
\left\{
\begin{array}{l}
X_{I_i}\T{T}(y-X\hat{\beta})=w\lambda_i\frac{\hat{\beta}_{I_i}}{\|\hat{\beta}_{I_i}\|_2},\ i=1,\ldots, s\\
\llbracket X\T{T}(y-X\hat{\beta})\rrbracket_{I^c}\in C_{w\lambda^c}
\end{array}
\right..
\end{equation}
Since $X_{I_i}\T{T}X_{I_i}\approx\mathbf{I}_l$, for $i\leq s$ we get $X_{I_i}\T{T}\left(y - X_{\backslash I_i}\hat{\beta}_{\backslash I_i}\right) \approx \hat{\beta}_{I_i}\left(1+\frac{w\lambda_i}{\|\hat{\beta}_{I_i}\|_2}\right)$, where $X_{\backslash I_i}$ is matrix $X$ without columns from $I_i$ and $\hat{\beta}_{\backslash I_i}$ denotes vector $\hat{\beta}$ with removed coefficients indexed by $I_i$. This means that, for $i=1,\ldots, s$, vector $v_{I_i}:=X_{I_i}\T{T}\left(y - X_{\backslash I_i}\hat{\beta}_{\backslash I_i}\right)$ is approximately collinear with $\hat{\beta}_{I_i}$. Since $1+\frac{w\lambda_i}{\|\hat{\beta}_{I_i}\|_2}>0$, we have $\frac{v_{I_i}}{\|v_{I_i}\|_2} \approx \frac{\hat{\beta}_{I_i}}{\|\hat{\beta}_{I_i}\|_2}$.
This yields $\hat{\beta}_{I_i}\approx\left(1-\frac{w\lambda_i}{\|v_{I_i}\|_2}\right)v_{I_i}$ and consequently $\|\hat{\beta}_{I_i}\|_2\approx\Big|\|v_{I_i}\|_2-w\lambda_i\Big|$. Therefore (\ref{24031126}) can be written as 
\begin{equation}
\label{23061949}
\left\{
\begin{array}{l}
\Big|\|v_{I_i}\|_2-w\lambda_i\Big|\approx\|\hat{\beta_{I_i}}\|_2,\ i=1,\ldots, s\\
\llbracket v \rrbracket_{I^c}\in C_{w\lambda^c}
\end{array}
\right.,
\end{equation}
for $v:=(v_{I_1}\T{T},\ldots,v_{I_m}\T{T})\T{T}$.

The task now is to select $\lambda_i$'s such that condition $\llbracket v\rrbracket_{I^c}\in C_{w\lambda^c}$ regulates the rate of false discoveries. Denote $I_S:=\bigcup_{i=1}^sI_i$. Putting $y=X_{I_S}\beta_{I_S}+z$, we obtain
\begin{equation}
\label{24061549}
v_{I_i} = X_{I_i}\T{T}X_{I_S}(\beta_{I_S}- \hat{\beta}_{I_S}) + X_{I_i}\T{T}z,
\end{equation}
for $i>s$ (irrelevant groups). Under orthogonal design this expression reduces only to the term $X_{I_i}\T{T}z$, and in such situation $\|v_{I_i}\|_2$ has $\Chi$ distribution with $l$ degrees of freedom which was used in subsection \ref{subsec:06232149} to define the sequence $\lambda$. In the considered near-orthogonal situation, the term $X_{I_i}\T{T}X_{I_S}(\beta_{I_S}- \hat{\beta}_{I_S})$ should be also taken into account. Two following assumptions will be important to derive the appropriate approximation of $v_{I_i}$ distribution:
\begin{itemize}
\setlength\itemsep{1pt}
\item the distribution of $v_{I_i}$ could be well approximated by multivariate normal distribution, 
\item for relatively strong effects it occurs $\frac{\hat{\beta}_{I_i}}{\|\hat{\beta}_{I_i}\|_2}\approx\frac{\beta_{I_i}}{\|\beta_{I_i}\|_2}$ for $i=1,\ldots,s$.
\end{itemize}
The first assumption is justified when one works with large data scenario, based on the Central Limit Theorem. In discussion concerning the second assumption it is important to clarify the effect of penalty imposed on entire groups. The magnitudes of coefficients in $\hat{\beta}_{I_i}$, for truly relevant group $i$, are generally significantly smaller than in $\beta_{I_i}$. This, a so-called shrinking effect, is typical for penalized methods. It turns out, however, that under assumed conditions estimates of coefficients of nonzero $\beta_{I_i}$ are pulled to zero proportionally and after normalizing, $\hat{\beta}_{I_i}$ and $\beta_{I_i}$ are comparable. 

From the upper equation in (\ref{24031126}), we have that $X_{I_S}\T{T}(X_{I_S}\beta_{I_S}-X_{I_S}\hat{\beta}_{I_S}) + X_{I_S}\T{T}z \approx wH_{\lambda,\beta}$, for 
\begin{equation}
\label{27091918}
H_{\lambda,\beta}: = \Big(\lambda_1\frac{\beta_{I_1}\T{T}}{\|\beta_{I_1}\|_2},\ldots, \lambda_s\frac{\beta_{I_s}\T{T}}{\|\beta_{I_s}\|_2}\Big)\T{T}, 
\end{equation}
which gives $X_{I_i}\T{T}X_{I_S}(\beta_{I_S}-\hat{\beta}_{I_S}) \approx X_{I_i}\T{T}X_{I_S}(X_{I_S}\T{T}X_{I_S})^{-1}(wH_{\lambda,\beta} - X_{I_S}\T{T}z)$. Combining the last expression with (\ref{24061549}) yields
\begin{equation}
\label{03251704}
v_{I_i}\approx  X_{I_i}\T{T}X_{I_S}(X_{I_S}\T{T}X_{I_S})^{-1}\Big(wH_{\lambda,\beta}-X_{I_S}\T{T}z\Big)+X_{I_i}\T{T}z.
\end{equation}
To determine the parameters of multivariate normal distribution, which best describes the distribution of $v_{I_i}$, we will derive the exact values of the mean and the covariance matrix of the distribution of the right-hand side expression in (\ref{03251704}) for $i>s$. Since $I_i\cap I_S=\emptyset$ and entries of $X$ matrix are randomized independently with $\mathcal{N}\big(0,\frac 1n\big)$ distribution, the expected value of the random vector in (\ref{03251704}) is $0$ and its covariance matrix is provided in the following theorem.
\begin{theorem}
\label{06241642}
The covariance matrix of $\hat{v}_{I_i}:=X_{I_i}\T{T}X_{I_S}(X_{I_S}\T{T}X_{I_S})^{-1}\Big(wH_{\lambda,\beta}-X_{I_S}\T{T}z\Big)+X_{I_i}\T{T}z$, for $i>s$, is given by the formula
$$Cov(\hat{v}_{I_i}) = \left(\frac{n-ls}n+w^2\frac{\|\lambda^S\|^2_2}{n-ls-1}\right)\mathbf{I}_l,$$ where $\lambda^S:=(\lambda_1,\ldots, \lambda_s)\T{T}.$
\end{theorem}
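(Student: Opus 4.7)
The plan is to exploit the independence of $X_{I_i}$ from $(X_{I_S}, z)$ that comes from $I_i \cap I_S = \emptyset$ (so the columns of $X_{I_i}$ are independent of everything else), combined with the standard inverse Wishart moment formula. First I would absorb everything into a single vector: write
\[
\hat{v}_{I_i} = X_{I_i}^{\mathsf T} A, \qquad A := X_{I_S}(X_{I_S}^{\mathsf T} X_{I_S})^{-1}\bigl(wH_{\lambda,\beta} - X_{I_S}^{\mathsf T} z\bigr) + z \in \mathbb{R}^n.
\]
The point is that $A$ depends only on $X_{I_S}$ and $z$, both of which are independent of $X_{I_i}$. Conditioning on $A$, the entries of $X_{I_i}$ are i.i.d.\ $\mathcal{N}(0,1/n)$, so each coordinate of $X_{I_i}^{\mathsf T} A$ is a Gaussian linear combination with weights $A_1,\ldots,A_n$ and the coordinates are jointly Gaussian with covariance $(\|A\|_2^2/n)\mathbf{I}_l$. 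Since $\hat{v}_{I_i}$ has mean zero (conditioning on $A$ yields mean zero, hence unconditionally too), this reduces everything to computing $\mathbb{E}\|A\|_2^2/n$.

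Next I would decompose $A$ using the projection $P = X_{I_S}(X_{I_S}^{\mathsf T} X_{I_S})^{-1} X_{I_S}^{\mathsf T}$ onto the column span of $X_{I_S}$:
\[
A = X_{I_S}(X_{I_S}^{\mathsf T} X_{I_S})^{-1} wH_{\lambda,\beta} \;+\; (\mathbf{I}_n - P)z.
\]
The first summand lies in the column span of $X_{I_S}$ and the second in its orthogonal complement, so the cross term in $\|A\|_2^2$ vanishes. For the first summand I compute
\[
\bigl\|X_{I_S}(X_{I_S}^{\mathsf T} X_{I_S})^{-1} wH_{\lambda,\beta}\bigr\|_2^2 = w^2 H_{\lambda,\beta}^{\mathsf T} (X_{I_S}^{\mathsf T} X_{I_S})^{-1} H_{\lambda,\beta}.
\]
For the second summand, since $z$ is independent of $X_{I_S}$ and $\mathrm{tr}(\mathbf{I}_n - P) = n - ls$ almost surely (the columns of $X_{I_S}$ are a.s.\ linearly independent when $n > ls$), we get $\mathbb{E}\|(\mathbf{I}_n - P)z\|_2^2 = n - ls$.

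The heart of the calculation is evaluating $\mathbb{E}[(X_{I_S}^{\mathsf T} X_{I_S})^{-1}]$. Here I would use that $\sqrt{n}\,X_{I_S}$ has i.i.d.\ $\mathcal{N}(0,1)$ entries, so $W := nX_{I_S}^{\mathsf T} X_{I_S}$ is Wishart $W_{ls}(\mathbf{I}_{ls}, n)$. The standard inverse Wishart mean formula gives $\mathbb{E}[W^{-1}] = \tfrac{1}{n - ls - 1}\mathbf{I}_{ls}$ (valid because we have assumed $l \ll n$), hence $\mathbb{E}[(X_{I_S}^{\mathsf T} X_{I_S})^{-1}] = \tfrac{n}{n - ls - 1}\mathbf{I}_{ls}$. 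Plugging in and using the explicit form of $H_{\lambda,\beta}$ to observe that
\[
\|H_{\lambda,\beta}\|_2^2 = \sum_{i=1}^s \lambda_i^2 \frac{\|\beta_{I_i}\|_2^2}{\|\beta_{I_i}\|_2^2} = \|\lambda^S\|_2^2,
\]
I obtain $\mathbb{E}\|A\|_2^2 = w^2 \tfrac{n}{n-ls-1}\|\lambda^S\|_2^2 + (n - ls)$. Dividing by $n$ yields exactly the scalar in the theorem.

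The only nontrivial step is invoking the inverse Wishart moment, which needs $n - ls - 1 > 0$; this is implicit from the $l \ll n$ assumption stated at the start of the subsection. All other steps are straightforward algebraic manipulations, exploiting the orthogonal decomposition against $P$ and the independence of $X_{I_i}$ from the rest.
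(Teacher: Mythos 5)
Your proof is correct, and it reaches the result by a mechanically different (and somewhat leaner) route than the paper. The paper splits $\hat{v}_{I_i}$ into $\xi_{X,z}=X_{I_i}\T{T}(\mathbf{I}_n-A_X)z$ and $\zeta_X=wX_{I_i}\T{T}B_XH_{\lambda,\beta}$, checks the cross-covariance vanishes, and then computes each covariance by appealing to two matrix-valued expectation lemmas, $\mathbb{E}(A_X)=\frac{ls}{n}\mathbf{I}_n$ and $\mathbb{E}\big(B_XH_{\lambda,\beta}H_{\lambda,\beta}\T{T}B_X\T{T}\big)=\frac{\|\lambda^S\|_2^2}{n-ls-1}\mathbf{I}_n$, each of which is proved separately via permutation and sign-flip symmetry arguments. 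You instead condition on $A:=B_XwH_{\lambda,\beta}+(\mathbf{I}_n-P)z$, which is independent of $X_{I_i}$; this makes the isotropy of the answer automatic (the conditional covariance is $(\|A\|_2^2/n)\mathbf{I}_l$ and the conditional mean is zero, so the law of total covariance leaves only $\mathbb{E}\|A\|_2^2/n$), and the orthogonal decomposition of $A$ against the projection $P$ plays the role of the paper's vanishing cross-covariance. The payoff is that you never need the full matrix statements of Lemmas \ref{25031835} and \ref{31032152} --- only the deterministic trace $\operatorname{tr}(\mathbf{I}_n-P)=n-ls$ and the scalar inverse Wishart moment $\mathbb{E}\big[H_{\lambda,\beta}\T{T}(X_{I_S}\T{T}X_{I_S})^{-1}H_{\lambda,\beta}\big]=\frac{n\|\lambda^S\|_2^2}{n-ls-1}$, together with $\|H_{\lambda,\beta}\|_2^2=\|\lambda^S\|_2^2$ --- so the symmetry lemmas can be bypassed entirely. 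Both arguments rest on the same two quantitative inputs, and you correctly flag the only real hypothesis, $n-ls-1>0$, needed for the inverse Wishart mean to exist.
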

\noindent Before proving Theorem \ref{06241642}, we will introduce two lemmas, proofs of which can be found in the Appendix.
\begin{lemma}
\label{25031835}
Suppose that entries of a random matrix $X\in M(n,r)$, with $r\leq n$, are independently and identically distributed and have a normal distribution with zero mean. Then, there exists the expected value of a random matrix $A_X = X(X\T{T}X)^{-1}X\T{T}$ and $\mathbb{E}\left(A_X\right) = \frac rn \mathbf{I}_n$.
\end{lemma}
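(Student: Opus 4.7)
The plan is to use the rotational invariance of the distribution of $X$ together with the fact that $A_X$ is always an orthogonal projection, which makes every entry uniformly bounded and gives an immediate handle on its trace.

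First I would observe that because $r\le n$ and the entries of $X$ are i.i.d.\ from a continuous distribution, the columns of $X$ are linearly independent almost surely, so $X^{\mathsf{T}}X$ is invertible a.s.\ and $A_X=X(X^{\mathsf{T}}X)^{-1}X^{\mathsf{T}}$ is well-defined. A direct computation shows $A_X^2=A_X$ and $A_X^{\mathsf{T}}=A_X$, so $A_X$ is the orthogonal projection onto the column span of $X$. In particular every entry of $A_X$ lies in $[-1,1]$, which guarantees that the expectation $\mathbb{E}(A_X)$ exists and that we may freely interchange expectation with linear operations. Since $A_X$ has rank $r$ almost surely, $\operatorname{tr}(A_X)=r$ a.s., hence $\mathbb{E}\operatorname{tr}(A_X)=r$.

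Next I would exploit the rotational invariance of the law of $X$. For every orthogonal matrix $Q\in O(n)$, the matrix $QX$ has i.i.d.\ mean-zero normal entries with the same variance as $X$ (this is the key place the normal/isotropic assumption enters), so $QX\stackrel{d}{=}X$. A short calculation gives
\begin{equation*}
A_{QX}=QX\bigl((QX)^{\mathsf{T}}(QX)\bigr)^{-1}(QX)^{\mathsf{T}}=QX(X^{\mathsf{T}}X)^{-1}X^{\mathsf{T}}Q^{\mathsf{T}}=QA_XQ^{\mathsf{T}}.
\end{equation*}
Taking expectations and setting $M:=\mathbb{E}(A_X)$ yields $M=QMQ^{\mathsf{T}}$ for every $Q\in O(n)$.

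Finally I would invoke the standard fact from representation theory (or a direct elementary argument: choose $Q$ to be a permutation to force $M_{ii}=M_{jj}$, and choose $Q$ to flip a single coordinate sign to force $M_{ij}=0$ for $i\ne j$) that a matrix commuting in this way with all orthogonal transformations must be a scalar multiple of the identity. Thus $M=c\,\mathbf{I}_n$ for some $c\in\mathbb{R}$. Taking trace, $nc=\operatorname{tr}(M)=\mathbb{E}\operatorname{tr}(A_X)=r$, giving $c=r/n$ and hence $\mathbb{E}(A_X)=\frac{r}{n}\mathbf{I}_n$. The only subtle step is the rotational invariance argument; the rest is routine once one recognizes $A_X$ as a projection.
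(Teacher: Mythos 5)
Your proof is correct, and its skeleton (a symmetry argument pinning $\mathbb{E}(A_X)$ down to a multiple of the identity, followed by the trace identity $\operatorname{tr}(A_X)=r$ to fix the constant) is the same as the paper's. The one genuine difference is the symmetry group you invoke: you use invariance of the law of $X$ under the full orthogonal group $O(n)$, which is where Gaussianity enters for you, and then conclude $M=QMQ^{\mathsf{T}}$ for all $Q$ forces $M=c\,\mathbf{I}_n$. The paper instead uses only the discrete subgroup generated by row permutations and a single coordinate sign flip --- exactly the ``direct elementary argument'' you mention in parentheses --- which needs nothing more than the entries being i.i.d.\ with a distribution symmetric about zero; in that sense the paper's version of the step is marginally more general, while yours is slicker for the Gaussian case actually assumed. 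The two proofs also differ cosmetically in how they justify integrability: you bound every entry of the projection by $1$, the paper bounds it by the Frobenius norm $\sqrt{r}$; either suffices. One small point worth making explicit in your write-up is that $X^{\mathsf{T}}X$ is invertible only almost surely, so $A_X$ is defined off a null set --- you do say this, and the paper leaves it implicit, so no gap either way.
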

\begin{lemma}
\label{31032152}
Suppose that $X\in M(n,r)$, with $r+1<n$, and entries of $X$ are independent and identically distributed, $X_{ij}\sim\mathcal{N}(0,1/n)$ for all $i$ and $j$. Then, there exists expected value of random matrix, $M_{X,\lambda}: = B_XH_{\lambda,\beta}H_{\lambda,\beta}\T{T}B_X\T{T}$, for $B_X=X(X\T{T}X)^{-1}$ and $H_{\lambda,\beta}$ defined in \eqref{27091918}. Moreover, it holds $\mathbb{E}\left(M_X\right) = \frac {\|\lambda_S\|_2^2}{n-r-1}\, \mathbf{I}_n$.
\end{lemma}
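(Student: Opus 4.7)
The plan is to combine a single symmetry reduction with a trace computation. Writing $h:=H_{\lambda,\beta}$, the random matrix under study is $M_X=B_X h\,h\T{T}B_X\T{T}$, where $h$ is deterministic with
\begin{equation*}
\|h\|_2^2=\sum_{i=1}^{s}\lambda_i^2\,\frac{\|\beta_{I_i}\|_2^2}{\|\beta_{I_i}\|_2^2}=\|\lambda_S\|_2^2,
\end{equation*}
and the distribution of $X$ is invariant under left multiplication $X\mapsto OX$ by any $n\times n$ orthogonal $O$, since its entries are i.i.d.\ Gaussian.

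First I would exploit this left rotation invariance. Under $X\mapsto OX$ one has $X\T{T}X\mapsto X\T{T}X$ and hence $B_X=X(X\T{T}X)^{-1}\mapsto O B_X$, which yields $M_X\mapsto O M_X O\T{T}$. Taking expectations gives $O\,\mathbb{E}(M_X)\,O\T{T}=\mathbb{E}(M_X)$ for every orthogonal $O$, forcing $\mathbb{E}(M_X)=c\,\mathbf{I}_n$ for some scalar $c\ge 0$. The problem then reduces to computing $c$, which I would do by taking traces:
\begin{equation*}
nc=\mathbb{E}\bigl(\operatorname{tr}(M_X)\bigr)=\mathbb{E}\bigl(h\T{T}B_X\T{T}B_X h\bigr)=h\T{T}\,\mathbb{E}\bigl(B_X\T{T}B_X\bigr)\,h.
\end{equation*}
A direct simplification gives $B_X\T{T}B_X=(X\T{T}X)^{-1}X\T{T}X(X\T{T}X)^{-1}=(X\T{T}X)^{-1}$. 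Since $X_{ij}\sim\mathcal{N}(0,1/n)$, the matrix $nX\T{T}X$ is standard Wishart $W_r(\mathbf{I}_r,n)$, so $(X\T{T}X)^{-1}$ is inverse Wishart with mean $\tfrac{n}{n-r-1}\mathbf{I}_r$; the hypothesis $r+1<n$ is precisely what guarantees integrability, and bounding $\|M_X\|\le \|(X\T{T}X)^{-1}\|\,\|h\|_2^2$ then gives the existence of $\mathbb{E}(M_X)$ asserted in the lemma. Substituting gives $nc=\tfrac{n\|h\|_2^2}{n-r-1}=\tfrac{n\|\lambda_S\|_2^2}{n-r-1}$, whence $c=\|\lambda_S\|_2^2/(n-r-1)$, which is the stated formula.

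The only nontrivial ingredient is the inverse-Wishart mean identity; everything else is routine linear algebra combined with the one symmetry argument. If one prefers to avoid quoting that identity, a parallel route is to also use right rotation invariance $X\mapsto XU$ (with $U\in O(r)$), which shows that $\mathbb{E}(B_X A B_X\T{T})=\tfrac{\operatorname{tr}(A)}{r}\,\mathbb{E}(B_XB_X\T{T})$ for every deterministic symmetric $A$; applying this to $A=hh\T{T}$ and then evaluating $\mathbb{E}(B_XB_X\T{T})$ via the scalar identity $\operatorname{tr}(B_XB_X\T{T})=\operatorname{tr}((X\T{T}X)^{-1})$ isolates the Wishart computation into one final scalar and produces the same constant.
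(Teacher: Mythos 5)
Your proof is correct and follows essentially the same route as the paper's: a symmetry argument forces $\mathbb{E}(M_{X,\lambda})$ to be a multiple of $\mathbf{I}_n$, and the constant is then pinned down by the trace identity $\operatorname{tr}(M_{X,\lambda})=H_{\lambda,\beta}^\mathsf{T}(X^\mathsf{T}X)^{-1}H_{\lambda,\beta}$ together with the inverse-Wishart mean (with $r+1<n$ supplying integrability, exactly as in the paper's existence step). The only difference is cosmetic: you invoke the full left orthogonal invariance of the Gaussian ensemble in one stroke, whereas the paper reaches the same conclusion using only the discrete subgroup of row permutations and sign flips.
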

\begin{proof}[Proof of Theorem \ref{06241642}]
We have $\hat{v}_{I_i} =\xi_{X,z} + \zeta_X,$ for $\xi_{X,z}: = X_{I_i}\T{T}\left(\mathbf{I}_n-A_X\right)z$, $\zeta_X:= wX_{I_i}\T{T}B_XH_{\lambda,\beta}$, $A_X: = X_{I_S}(X_{I_S}\T{T}X_{I_S})^{-1}X_{I_S}\T{T}$, $B_X: = X_{I_S}(X_{I_S}\T{T}X_{I_S})^{-1}$. Since $\mathbb{E}(\xi_{X,z}\zeta_X\T{T}) = 0$ and mean of $\hat{v}_{I_i}$ is equal to $0$, it holds $Cov(\hat{v}_{I_i})=Cov(\xi_{X,z})+Cov(\zeta_X)$. Now thanks to Lemma \ref{25031835} and Lemma \ref{31032152}
\begin{equation}
\begin{split}
Cov(\xi_{X,z}) = \ &\mathbb{E}\left[X_{I_i}\T{T}\big(\mathbf{I}_n-A_X\big)zz\T{T}\big(\mathbf{I}_n-A_X\big)\T{T}X_{I_i}\right]=\\
&\mathbb{E}\left[X_{I_i}\T{T}\big(\mathbf{I}_n-A_X\big)\big(\mathbf{I}_n-A_X\big)\T{T}X_{I_i}\right]= \mathbb{E}\left[X_{I_i}\T{T}\big(\mathbf{I}_n-A_X\big)X_{I_i}\right] =\\
&\frac1n\big(n-ls\big)\cdot\mathbb{E}\left[X_{I_i}\T{T}X_{I_i}\right]=\frac1n\big(n-ls\big)\cdot\mathbf{I}_l,
\end{split}
\end{equation}
\begin{equation}
\begin{split}
Cov(\zeta_X) = \ & w^2\mathbb{E}\left[X_{I_i}\T{T}B_XH_{\lambda,\beta}H_{\lambda,\beta}\T{T}B_X\T{T}X_{I_i}\right] = w^2\frac{\|\lambda^S\|_2^2}{n-sl-1}\mathbb{E}\left[X_{I_i}\T{T}X_{I_i}\right] =\\
& w^2\frac{\|\lambda^S\|_2^2}{n-sl-1}\mathbf{I}_l,
\end{split}
\end{equation}
which finishes the proof.
\end{proof}
We have shown that for $i>s$ the distribution of $\|v_{I_i}\|_2$ could be approximated by scaled $\Chi$ distribution with $l$ degrees of freedom and scale parameter $\mathcal{S} = \sqrt{\frac{n-ls}n+\frac{w^2\|\lambda_S\|^2_2}{n-sl-1}}$. Now, analogously to the orthogonal situation, lambdas could be defined as $\lambda_i:=\frac{1}{w_i}F_{\mathcal{S}\Chi_l}^{-1}\left (1-\frac{q\cdot i}m\right )= \frac{\mathcal{S}}{w_i}F_{\Chi_l}^{-1}\left (1-\frac{q\cdot i}m\right )$. Since $s$ is unknown, we will apply the strategy used in $\cite{SLOPE}$: define $\lambda_1$ as in orthogonal case and for $j\geq 2$ define $\lambda_i$ basing on already generated sequence, according to following procedure.
\begin{algorithm}[H]
{\fontsize{9pt}{5pt}\selectfont
  \caption{Selecting lambdas in near-orthogonal situation: equal groups sizes}
	\label{24021313}
  \begin{algorithmic}
		\State \textbf{input:} $q\in (0,1)$,\ \ $w>0$,\ \ $p,\ n,\ m,\ l \in\mathbb{N}$
		\State $\lambda_1:=\frac1wF_{\chi_{\mathnormal{l}}}^{-1}\left (1-\frac{q}m\right )$;
		\State  \textbf{For} $i\in\{2,\ldots,m\}$:
		\State \indent  $\lambda^S: = (\lambda_1,\ldots, \lambda_{i-1})^\mathsf{T}$;
		\State \indent  $\mathcal{S}: = \sqrt{\frac{n-l(i-1)}n+\frac{w^2\|\lambda^S\|^2_2}{n-l(i-1)-1}}$;
		\State \indent  $\lambda^*_i:=\frac{\mathcal{S}}{w_i}F_{\chi_l}^{-1}\left (1-\frac{q\cdot i}m\right )$;
		\State \indent  if $\lambda^*_i\leq \lambda_{i-1}$, then put $\lambda_i:=\lambda^*_i$. Otherwise, stop the procedure and put $\lambda_j:=\lambda_{i-1}$ for $j\geq i$;
		\State  \textbf{end for}
  \end{algorithmic}\label{21091256}
	}
\end{algorithm}

To justify the need for correction, when the columns in the design matrix are realizations of independent random variables, we performed simulations with entries of design matrix generated from $\mathcal{N}(0,1/n)$ distribution. For each target gFDR level and true support size we generated observations according to (\ref{gmodel}) with $\sigma=1$ and $m=1000$ groups, each containing $l=5$ variables. For each index $i$, included to true support, we randomized group effect such as $\|X_{I_i}\beta_{I_i}\|_2: = B(m,l)$, for $B(m,l): = \sqrt{4\log(m)/\big(1-m^{-\frac2l}\big)-l}$.

In Figure \ref{21091237} we show gFDR for target levels $q_1=0.05$ and $q_2=0.1$, when lambdas are chosen based Theorem \ref{gFDRcontrol} (basic lambdas, Figure \ref{21091237}a) and when we apply the correction given by Procedure \ref{21091256} (corrected lambdas, Figure \ref{21091237}b). First $100$ coefficients of the corresponding sequences of smoothing parameters are shown in Figure \ref{21091237}c.
\begin{figure}[ht]
\centering
\begin{subfigure}{.33\textwidth}
  \centering
	\includegraphics[width=1\linewidth]{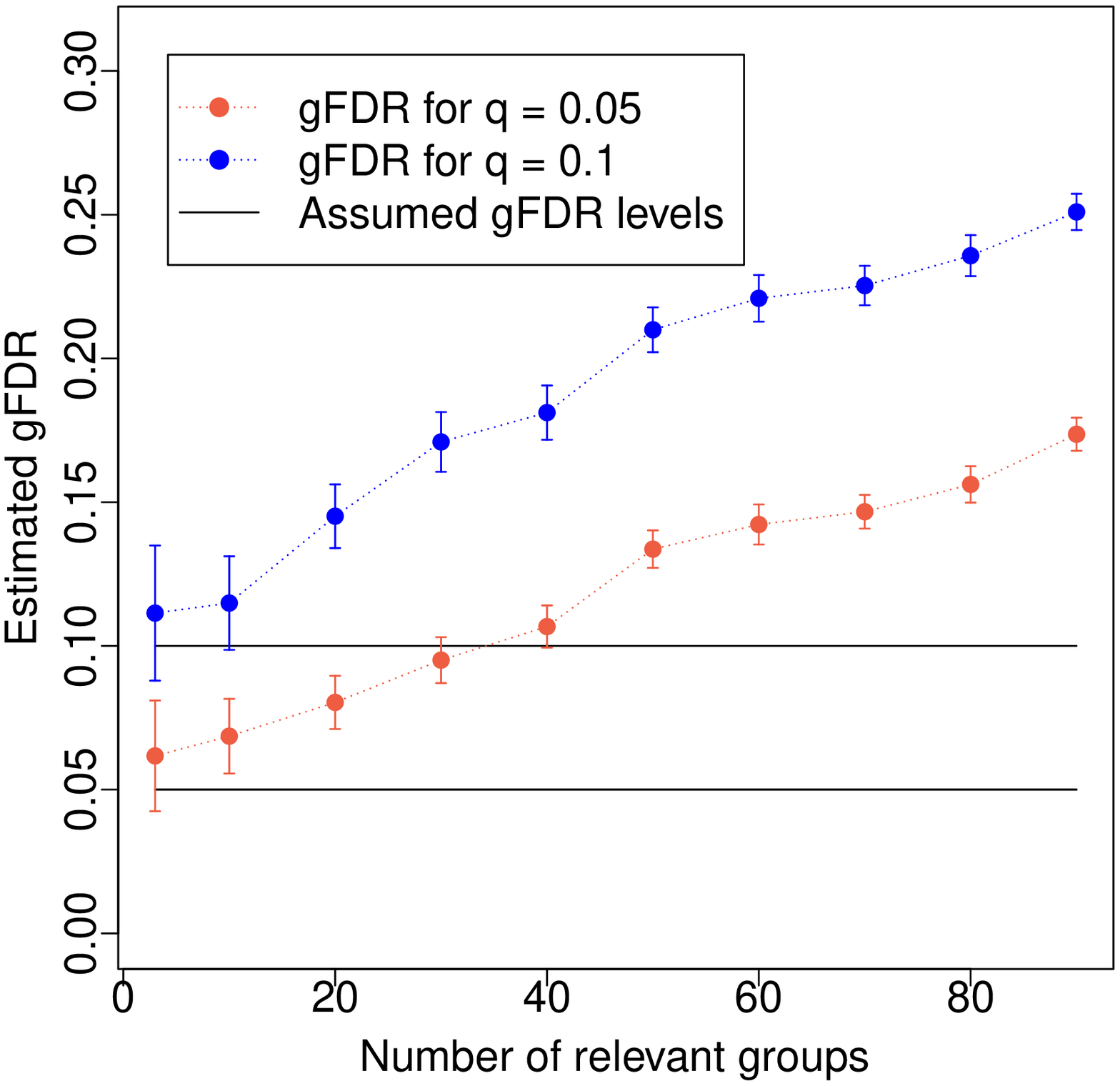}
  \caption{gFDR, basic lambdas}
\end{subfigure}%
\begin{subfigure}{.33\textwidth}
  \centering
  \includegraphics[width=1\linewidth]{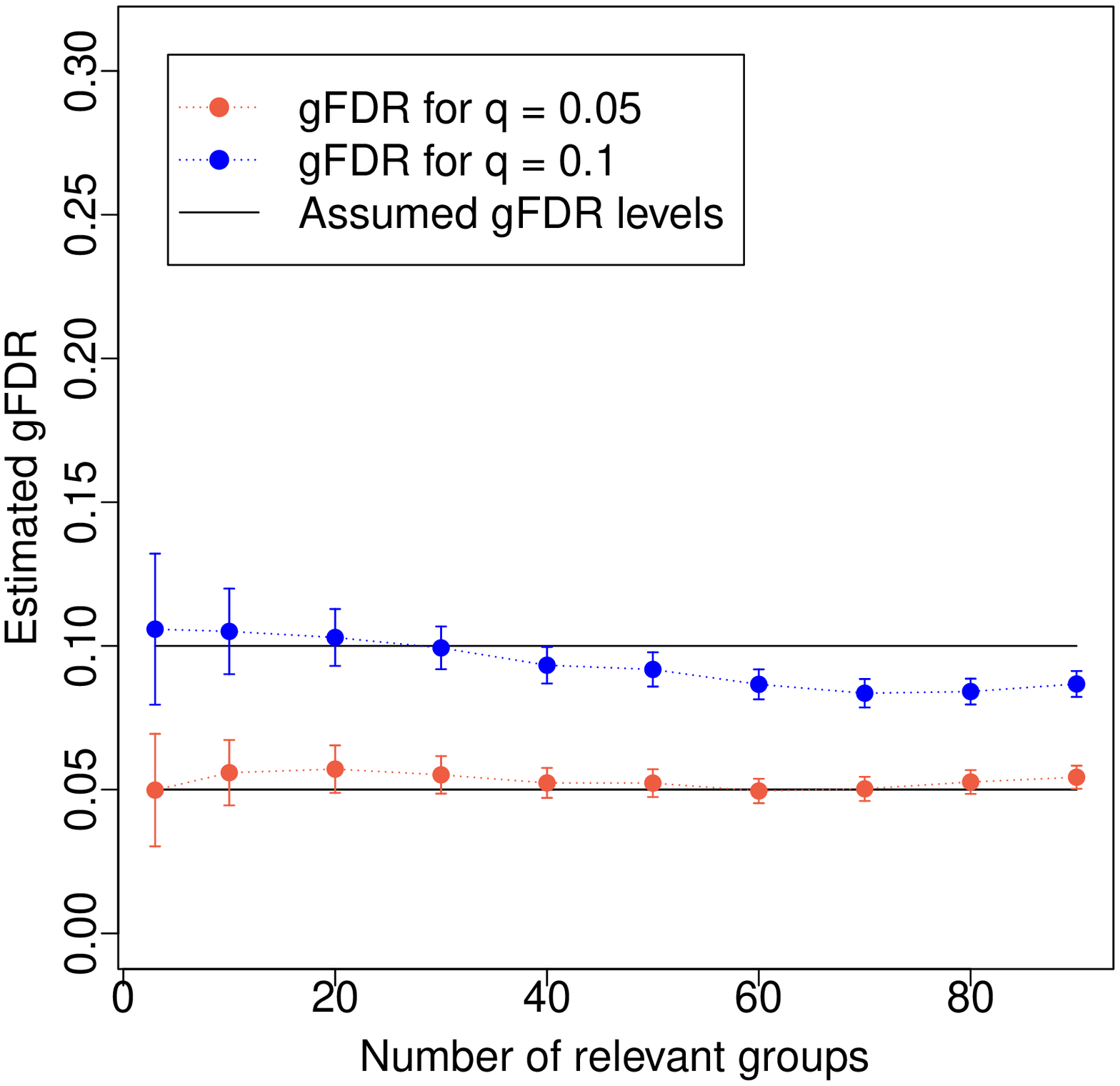}
  \caption{gFDR, corrected lambdas}
\end{subfigure}%
\begin{subfigure}{.33\textwidth}
  \centering
  \includegraphics[width=1\linewidth]{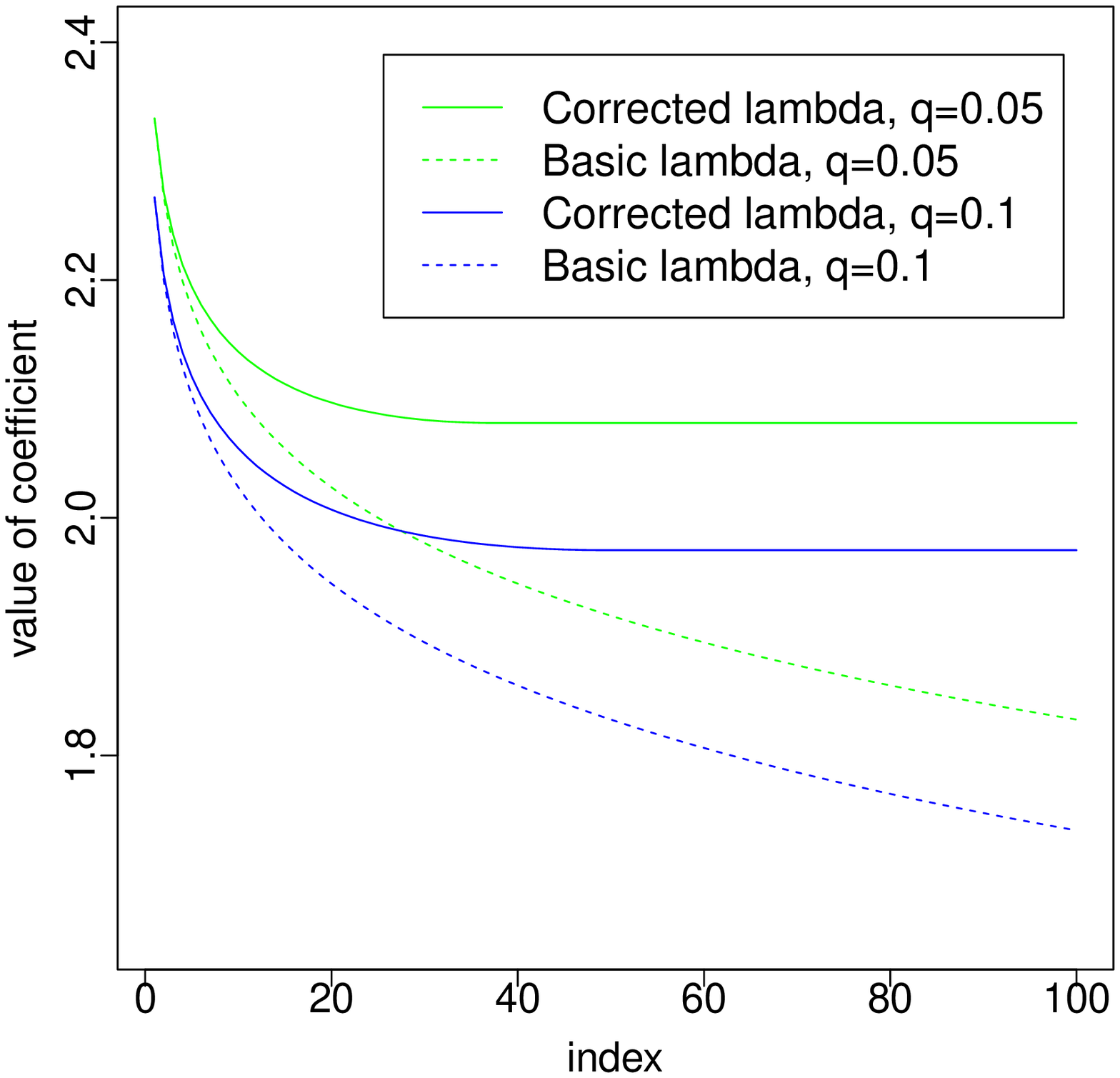}
  \caption{Basic and corrected lambdas}
\end{subfigure}%
\caption{Near-orthogonal situation with equal groups sizes for $l=5$, $m=1000$ and $p=n=5000$. For each target gFDR level and true support size, $200$ iterations were performed, bars correspond to $\pm 2$SE. Entries of design matrix were drawn from $\mathcal{N}(0,1/n)$ distribution and truly relevant signal, $i$, was generated such as $\|\beta_{I_i}\|_2=\sqrt{4\ln(m)/(1-m^{-2/l})-l}$.}
\label{21091237}
\end{figure}
Clearly, basic lambdas, defined in Theorem \ref{gFDRcontrol}, lead to excessive of the target gFDR, even when only relatively few groups are truly relevant. This means that the correction of smoothing parameters is necessary under occurrence of nonzero correlations between groups. The results (Figure 7b) show that our procedure allows to control gFDR for quite wide range of sparsity.

Consider now the Gaussian design with arbitrary groups sizes and sequence of positive weights $w_1,\ldots,w_m$. One possible approach is to construct consecutive $\lambda_i$ as the largest scaled quantiles among all distributions, i.e. as $\max\limits_{j=1,\ldots,m}\left\{\frac{\mathcal{S}_j}{w_j}F^{-1}_{\chi_{l_j}}\left (1-\frac{q\cdot i}{m}\right )\right\}$ for corrections $\mathcal{S}_j$'s adjusted to different $l_i$ values (the conservative strategy). In this article, however, we will stick to the more liberal strategy and we will construct relaxed lambdas basing on the concept used earlier in definition of $\lambda^{mean}$ \eqref{08191453}. Therefore we will generate lambdas according to Procedure \ref{11091537}, 
\begin{algorithm}
{\fontsize{9pt}{5pt}\selectfont
  \caption{Selecting lambdas in near-orthogonal situation: arbitrary groups sizes}
	\label{11091537}
  \begin{algorithmic}
		\State \textbf{input:} $q\in (0,1)$,\ \ $w_1,\ldots,w_m>0$,\ \ $p,\ n,\ m,\ l_1,\ldots, l_m \in\mathbb{N}$
		\State $\lambda_1:=\overline{F}^{-1}\left(1-\frac qm\right),\quad$ for $\quad\overline{F}(x): = \frac1m\sum_{i=1}^mF_{w^{-1}_i\chi_{l_i}}(x)$;
		\State  \textbf{for} $i\in\{2,\ldots,m\}$:
		\State \indent $\lambda^S: = (\lambda_1,\ldots, \lambda_{i-1})^\mathsf{T}$;
		\State \indent $\mathcal{S}_j: = \sqrt{\frac{n-l_j(i-1)}n+\frac{w_j^2\|\lambda^S\|^2_2}{n-l_j(i-1)-1}},\qquad$ for $j\in\{1,\ldots,m\}$;
		\State \indent $\lambda^*_i:=\overline{F}^{-1}_{\mathcal{S}}\left(1-\frac {qi}m\right),\quad$ for $\quad\overline{F}_{\mathcal{S}}(x): = \frac1m\sum_{j=1}^m\frac{\mathcal{S}_j}{w_j}F_{\chi_{l_j}}(x)$;
		\State \indent if $\lambda^*_i\leq \lambda_{i-1}$, then put $\lambda_i:=\lambda^*_i$. Otherwise, stop the procedure and put $\lambda_j:=\lambda_{i-1}$ for $j\geq i$;
		\State  \textbf{end for}
  \end{algorithmic}
	}
\end{algorithm}
where the idea is to use the arithmetic mean of scaled distributions rather than the maximum value, which enables to discover larger number of truly relevant groups as compared to the conservative variant. We will test these lambdas in the next subsection, where we also go one step further and assume additionally that the variance of the stochastic error in unknown.

%%%%%%%%%%%%%%%%%%%%%%%%%%%%%%%%%%%%%%%%%%%%%%%%%%%%%%%%%%%%%%%%%%%%%%%%%%%%%%%%%%%%%%%%%%%%%%%%%%%%%%%%%%%%%%%%%%%%%%%%%%%%%%%%%%%%%%%
%%%%%%%%%%%%%%%%%%%%%%%%%%%%%%%%%%%%%%%%%%%%%%%%%%%%%%%%%%%%%%%%%%%%%%%%%%%%%%%%%%%%%%%%%%%%%%%%%%%%%%%%%%%%%%%%%%%%%%%%%%%%%%%%%%%%%%%
\subsection{The estimation of the variance of stochastic error, $\sigma^2$}
Up until this moment, we have used $\sigma$ in gSLOPE optimization problem, assuming that this parameter is known . However, in many applications $\sigma$ is unknown and its estimation is an important issue. When $n>p$, the standard procedure is to use the unbiased estimator of $\sigma^2$, $\hat{\sigma}_{\textnormal{\tiny OLS}}^2$, given by
\begin{equation}
\label{11091832}
\hat{\sigma}_{\textnormal{\tiny OLS}}^2:=\big(y-X\beta^{\textnormal{\tiny OLS}}\big)^\mathsf{T}\big(y-X\beta^{\textnormal{\tiny OLS}}\big)/(n-p),\textrm{ for }\beta^{\textnormal{\tiny OLS}}:=(X^\mathsf{T}X)^{-1}X^\mathsf{T}y.
\end{equation}
For the target situation, with $p$ much larger than $n$, such an estimator can not be used. To estimate $\sigma$ we will therefore apply the procedure which was dedicated for this purpose in $\cite{SLOPE}$ in the context of SLOPE. Below we present algorithm adjusted to gSLOPE (Procedure \ref{11091824}).
\begin{algorithm}
{\fontsize{9pt}{5pt}\selectfont
  \caption{gSLOPE with estimation of $\sigma$}
	\label{11091824}
  \begin{algorithmic}
		\State \textbf{input:} $y$,\ $X$ and $\lambda$ (defined for some fixed $q$)
		\State  \textbf{initialize:} $S_+=\emptyset$;
		\State  \textbf{repeat}
		\State  \indent $S=S_+$;
		\State  \indent \textrm{compute RSS obtained by regressing }$y${ onto variables in }$S$;
		\State  \indent set $\hat{\sigma}=RSS/(n-|S|-1)$;
		\State  \indent compute the solution $\beta^{\textnormal{\tiny gS}}$ to gSLOPE with parameters $\hat{\sigma}$ and sequence $\lambda$;
		\State  \indent set $S_+=\operatorname{supp}(\beta^{\textnormal{\tiny gS}})$;
		\State  \textbf{until} $S_+=S$
  \end{algorithmic}
	}
\end{algorithm}
The idea standing behind the procedure is simple. The gSLOPE property of producing sparse estimators is used, and in each iteration columns in design matrix are first restricted to support of $\beta^{\textnormal{\tiny gS}}$, so the numbers of rows exceeds the number of columns and (\ref{11091832}) can be used. Algorithm terminates when gSLOPE finds the same subset of relevant variables as in the preceding iteration.

To investigate the performance of gSLOPE under the Gaussian design and various groups sizes, we performed simulations with $1000$ groups. Their sizes were drawn from the binomial distribution, $Bin(1000; 0.008)$, so as the expected value of the group size was equal to $8$ (Figure \ref{21091752}c). As a result, we obtained $7917$ variables, divided into $1000$ groups (the same division was used in all iterations and scenarios). For each sparsity level, two target gFDR levels, $0.05$ and $0.1$, and each iteration we generated entries of the design matrix using $\mathcal{N}\big(0,\frac1n\big)$ distribution, then $X$ was standardized and observation were generated according to model (\ref{gmodel}) with $\sigma=1$. To obtain estimates of relevant groups we have used the iterative version of gSLOPE, with $\sigma$ estimation (Procedure \ref{11091824}) and lambdas given by Procedure \ref{11091537}. We performed $200$ repetitions for each scenario, $n$ was fixed as $5000$. Results are represented in Figure \ref{21091752}.
\begin{figure}[ht]
\centering
\begin{subfigure}{.33\textwidth}
  \centering
	\includegraphics[width=1\linewidth]{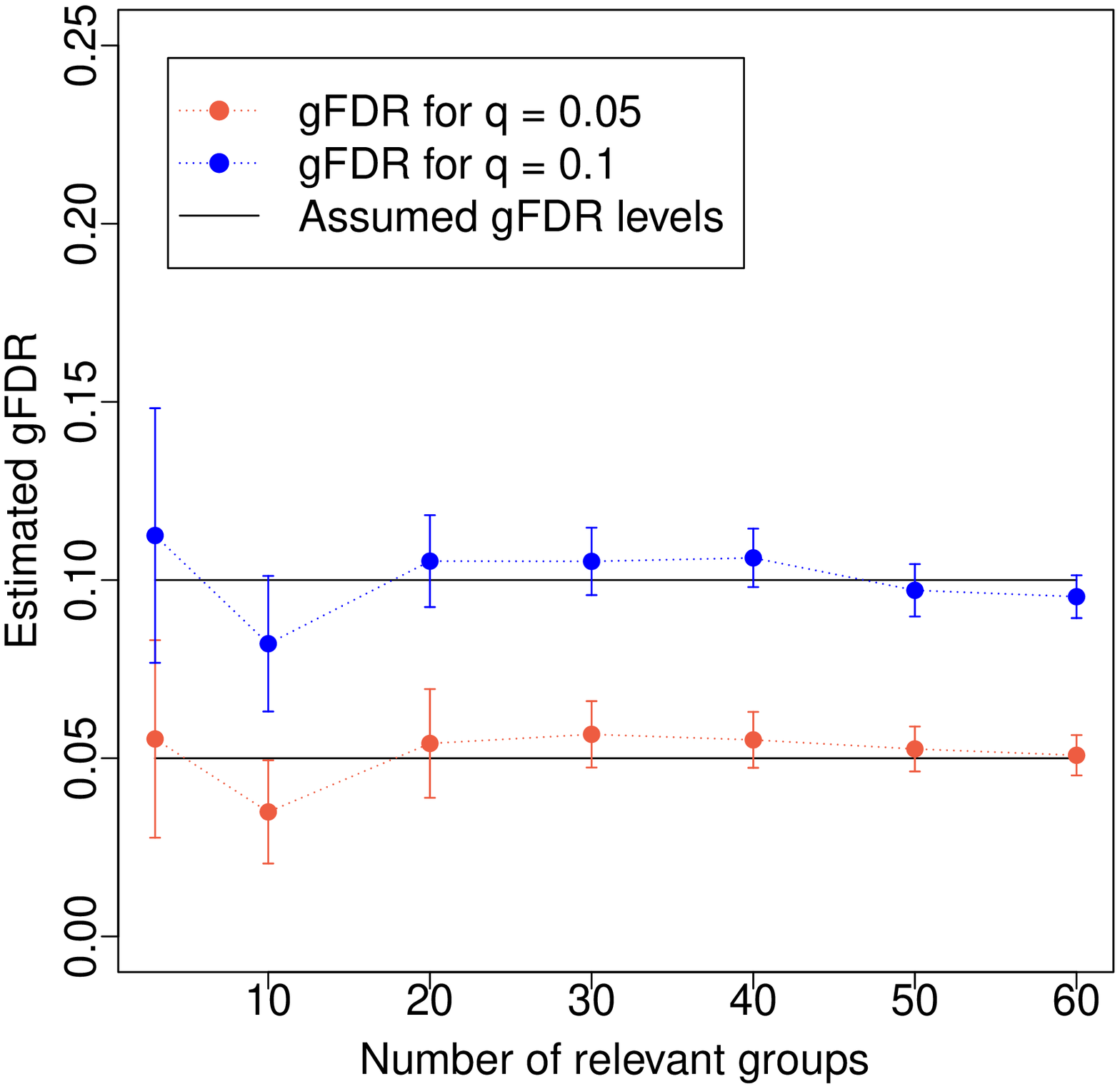}
  \caption{gFDR}
\end{subfigure}%
\begin{subfigure}{.33\textwidth}
  \centering
  \includegraphics[width=1\linewidth]{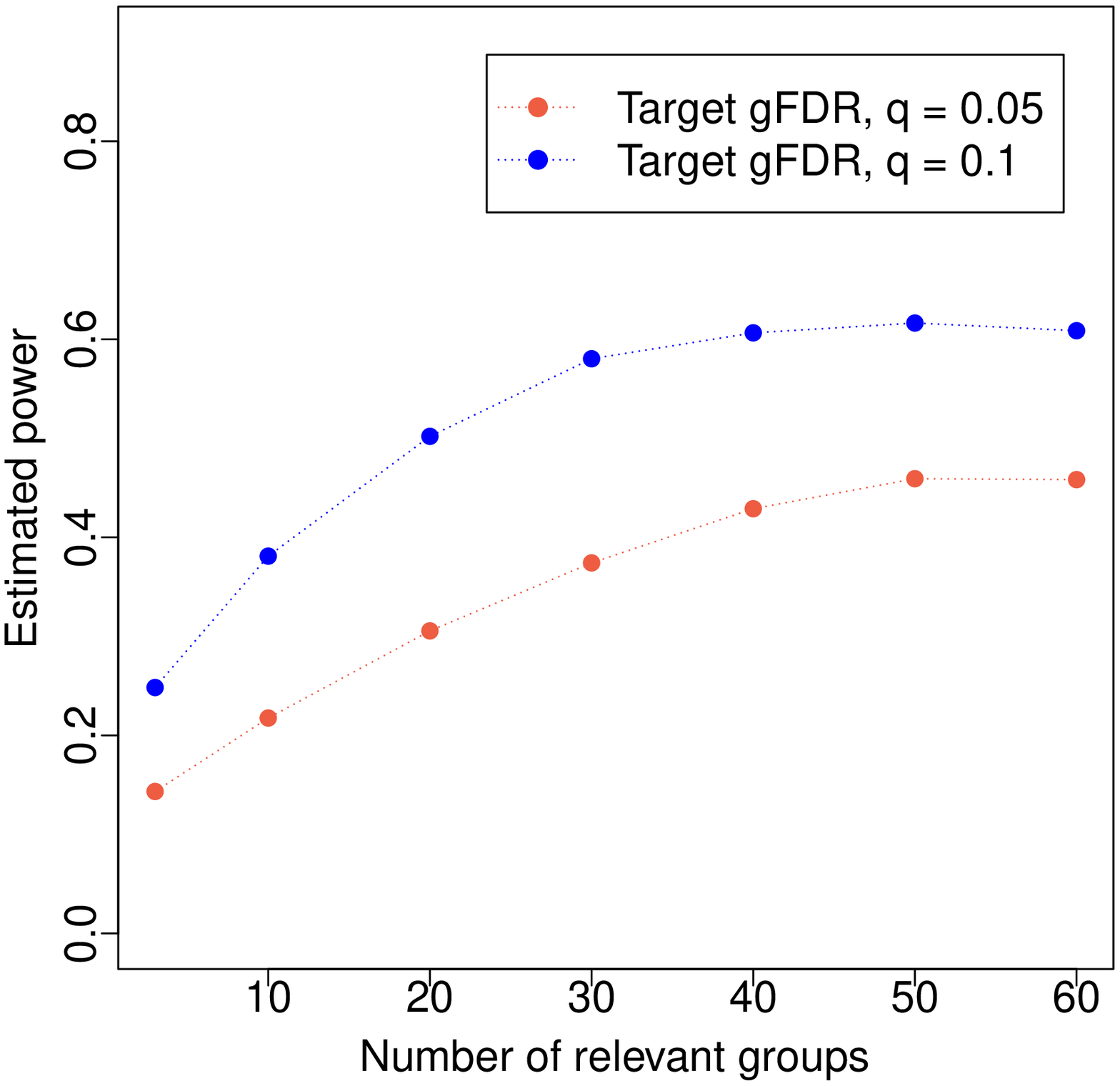}
  \caption{Power}
\end{subfigure}%
\begin{subfigure}{.33\textwidth}
  \centering
  \includegraphics[width=1\linewidth]{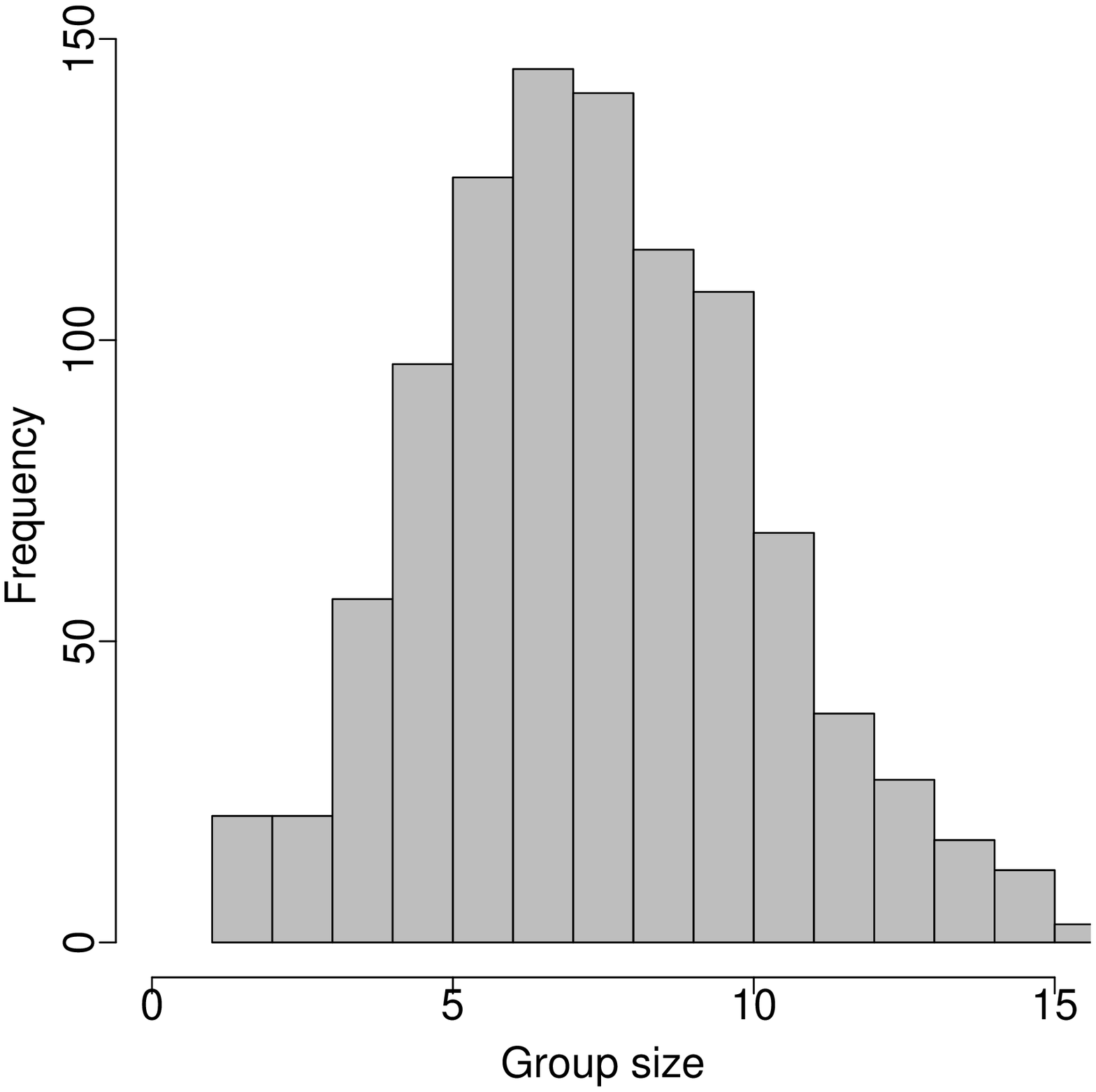}
  \caption{Histogram for group sizes}
\end{subfigure}%
\caption{Near-orthogonal situation for various groups sizes with $m=1000$, $p=7917$ and $n=5000$. Bars correspond to $\pm 2$SE. Entries of design matrix were drawn from $\mathcal{N}(0,1/n)$ distribution and truly relevant signal, $i$, was generated such as $\|X_{I_i}\beta_{I_i}\|_2=\frac1m\sum_{i=1}^mB(m,l_i)$, where $B(m,l)$ is defined in (5.24).}
\label{21091752}
\end{figure}

\section{Numerical algorithm}\label{31071347}
In this section we will discuss the convexity of the objective function and the algorithm for computing the solution to gSLOPE problem (\ref{gSLOPE}). Our optimization method is based on the fast algorithm for evaluation proximity operator (prox) for sorted $\ell_1$ norm, which was derived in \cite{SLOPE}.
\subsection{Convexity of the objective function}
\noindent To show that the objectives in problems \eqref{gSLOPE} and (\ref{24111029}) are a convex functions, we will prove the following propositions
\begin{proposition}
Function $J_{\lambda, W, \II}(b):=J_{\lambda}\Big( W\iI{b}\Big)$ is a norm for any nonnegative, nonincreasing sequence $\{\lambda_i\}_{i=1}^m$ containing at least one nonzero element, partition $\II$ of the set $\{1,\ldots,\widetilde{p}\}$ and diagonal matrix $W$ with positive elements on diagonal.
\end{proposition}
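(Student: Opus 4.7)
The plan is to verify the four defining axioms of a norm for $J_{\lambda, W, \II}$, relying on the fact that the sorted $\ell_1$ norm $J_\lambda$ on $\mathbb{R}^m$ is itself a norm (as established in \cite{SLOPE}). Nonnegativity is immediate since $J_\lambda$ is a sum of products of nonnegative quantities. For absolute homogeneity, I would observe that $\iI{\alpha b} = |\alpha|\iI{b}$ because each component $\|(\alpha b)_{\II_i}\|_2 = |\alpha|\|b_{\II_i}\|_2$, so $W\iI{\alpha b} = |\alpha|W\iI{b}$ and the homogeneity of $J_\lambda$ finishes this step.

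For positive definiteness I would use the assumption that $\lambda$ has at least one nonzero element combined with monotonicity $\lambda_1 \geq \ldots \geq \lambda_m \geq 0$, which forces $\lambda_1 > 0$. Then $J_{\lambda, W, \II}(b) \geq \lambda_1 \max_i (w_i\|b_{\II_i}\|_2)$, so $J_{\lambda, W, \II}(b) = 0$ forces each $w_i\|b_{\II_i}\|_2 = 0$; since $w_i > 0$ and $\{\II_i\}$ covers $\{1,\ldots,\widetilde p\}$, this yields $b = 0$.

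The main step, and mild obstacle, is the triangle inequality. I would argue in two stages. First, the Euclidean triangle inequality applied coordinatewise gives $\|(a+b)_{\II_i}\|_2 \leq \|a_{\II_i}\|_2 + \|b_{\II_i}\|_2$, and since $w_i > 0$ this yields the componentwise bound
\begin{equation*}
0 \leq W\iI{a+b} \leq W\iI{a} + W\iI{b}.
\end{equation*}
Second, I would invoke the monotonicity of $J_\lambda$ on the nonnegative orthant: if $0 \leq x \leq y$ componentwise in $\mathbb{R}^m$, then the decreasing rearrangements satisfy $x_{(i)} \leq y_{(i)}$ for every $i$ (a standard rearrangement fact), and since $\lambda_i \geq 0$ this gives $J_\lambda(x) \leq J_\lambda(y)$. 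Applying this to the displayed inequality, then using the fact that $J_\lambda$ itself obeys the triangle inequality (it is a norm on $\mathbb{R}^m$), I obtain
\begin{equation*}
J_{\lambda, W, \II}(a+b) \leq J_\lambda\bigl(W\iI{a} + W\iI{b}\bigr) \leq J_\lambda(W\iI{a}) + J_\lambda(W\iI{b}) = J_{\lambda, W, \II}(a) + J_{\lambda, W, \II}(b),
\end{equation*}
which completes the verification. The only subtlety worth spelling out is the rearrangement monotonicity step, since the other axioms reduce mechanically to properties of $J_\lambda$ already known from the SLOPE literature.
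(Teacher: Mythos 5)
Your proposal is correct and follows essentially the same route as the paper: the componentwise bound $W\iI{a+b}\preceq W\iI{a}+W\iI{b}$, then monotonicity of $J_\lambda$ on the nonnegative orthant via the rearrangement fact (the paper's Proposition \ref{Props1} and Corollary \ref{17021046}), then the triangle inequality for $J_\lambda$ itself. The only difference is that you spell out positive definiteness and homogeneity, which the paper dismisses as "easy to see"; your extra detail there is accurate.
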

\begin{proof}
It is easy to see that $J_{\lambda, W, \II}(c) = 0$ if and only if $c=0$ and that for any scalar $\alpha\in \mathbb{R}$ it occurs $J_{\lambda, W, \II}(\alpha c) = |\alpha|J_{\lambda, W, \II}(c)$. We will show that $J_{\lambda, W, \II}$ satisfies the triangle inequality. Let $b, c$ be any vectors from $\mathbb{R}^{\widetilde{p}}$. From the positivity of $w_i$'s we have $W\iI{a+b}\preceq W\iI{a} + W\iI{b}$. Therefore, Corollary \ref{17021046} yields
\begin{equation}
\begin{split}
J_{\lambda, W, \II}\big(a+ b\big)= & J_{\lambda}\Big( W\iI{a+b}\Big)\leq\  J_{\lambda}\Big( W\iI{a} + W\iI{b}\Big)\leq \\ & J_{\lambda}\Big( W\iI{a}\Big) + J_{\lambda}\Big(W\iI{b}\Big) = J_{\lambda, W, \II}(a) + J_{\lambda, W, \II}(b).
\end{split}
\end{equation}
since $J_{\lambda}$ is the norm.
\end{proof}

\begin{proposition}
Function $J_{\lambda}\Big( W\XI{b}\Big)$ is a seminorm for any nonnegative, nonincreasing sequence $\{\lambda_i\}_{i=1}^m$, partition $I$ of the set $\{1,\ldots,p\}$, design matrix $X\in M(n,p)$ and diagonal matrix $W$ with positive elements on diagonal.
\end{proposition}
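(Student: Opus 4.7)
The plan is to piggyback on the previous proposition by passing through the standardized representation. Recall that in Section~2 the authors established the decomposition $X_{I_i} = U_i R_i$ and showed that setting $c_{\II_i} := R_i b_{I_i}$ yields
\[
\bigl(\XI{b}\bigr)_i = \|X_{I_i} b_{I_i}\|_2 = \|R_i b_{I_i}\|_2 = \|c_{\II_i}\|_2 = \bigl(\iI{c}\bigr)_i.
\]
Hence $J_{\lambda}(W\XI{b}) = J_{\lambda}(W\iI{c}) = J_{\lambda, W, \II}(c)$, where $c$ depends linearly on $b$ through the block‑diagonal map $\mathcal{R}: b \mapsto (R_1 b_{I_1}, \ldots, R_m b_{I_m})$. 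Since the previous proposition shows that $J_{\lambda, W, \II}$ is a norm on $\mathbb{R}^{\widetilde{p}}$, the function under consideration is the pullback of a norm by a linear operator, and such pullbacks are always seminorms.

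Concretely, I would verify the two remaining seminorm axioms directly. For absolute homogeneity, given $\alpha\in\mathbb{R}$,
\[
J_{\lambda}\bigl(W\XI{\alpha b}\bigr) = J_{\lambda, W, \II}(\mathcal{R}(\alpha b)) = J_{\lambda, W, \II}(\alpha\,\mathcal{R}(b)) = |\alpha|\,J_{\lambda, W, \II}(\mathcal{R}(b)) = |\alpha|\,J_{\lambda}\bigl(W\XI{b}\bigr),
\]
using the linearity of $\mathcal{R}$ and the homogeneity of the norm from the preceding proposition. For the triangle inequality, linearity of $\mathcal{R}$ gives $\mathcal{R}(a+b) = \mathcal{R}(a) + \mathcal{R}(b)$, and then
\[
J_{\lambda}\bigl(W\XI{a+b}\bigr) = J_{\lambda, W, \II}(\mathcal{R}(a)+\mathcal{R}(b)) \leq J_{\lambda, W, \II}(\mathcal{R}(a)) + J_{\lambda, W, \II}(\mathcal{R}(b)) = J_{\lambda}\bigl(W\XI{a}\bigr) + J_{\lambda}\bigl(W\XI{b}\bigr).
\]
Non‑negativity is immediate from $J_{\lambda}(\cdot)\ge 0$ on $\mathbb{R}_{\ge 0}^m$.

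There is essentially no hard step here; the only substantive point is to explain why we obtain a seminorm rather than a norm, and this is exactly the reason the proposition is stated more weakly than the previous one. If any submatrix $X_{I_i}$ fails to have full column rank, then $R_i$ has a nontrivial kernel, so one can choose $b_{I_i}\neq 0$ with $R_i b_{I_i}=0$, giving $\|X_{I_i} b_{I_i}\|_2 = 0$; taking such a $b$ supported on a single deficient block yields $b\neq 0$ while $J_{\lambda}(W\XI{b})=0$. Thus positivity can fail in general, which is exactly why $X$ is allowed to be arbitrary (including with linear dependencies inside blocks) in the setup of Section~\ref{subsec:gs2711944}.
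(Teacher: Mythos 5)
Your proposal is correct, but it takes a genuinely different route from the paper. The paper proves this directly, in one line each: absolute homogeneity is immediate, and for the triangle inequality it observes that the $\ell_2$ triangle inequality inside each group gives the componentwise domination $W\XI{a+b}\preceq W\XI{a}+W\XI{b}$, after which the monotonicity of $J_\lambda$ (Corollary \ref{17021046}) and the triangle inequality of $J_\lambda$ finish the job — exactly mirroring the proof of the preceding proposition rather than invoking it. You instead factor through the standardized representation $X_{I_i}=U_iR_i$, identify $J_\lambda\big(W\XI{b}\big)$ with $J_{\lambda,W,\II}(\mathcal{R}(b))$ for the linear block map $\mathcal{R}$, and use the general fact that the pullback of a norm along a linear map is a seminorm. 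Your route is more structural, genuinely reuses the previous proposition, and adds something the paper omits: a precise explanation of when positive definiteness fails (a nontrivial kernel of some $R_i$), which is why the statement is weakened to a seminorm. The paper's route buys a little more generality and self-containment: it needs no factorization, so it covers a block $X_{I_i}=0$ without comment (your $U_iR_i$ decomposition presupposes the standing assumption $l_i>0$; if a block were zero you would just note that the corresponding component of $\XI{b}$ vanishes identically). One more pedantic point: the previous proposition asserts that $J_{\lambda,W,\II}$ is a norm only when some $\lambda_i\neq 0$, whereas the present statement allows $\lambda\equiv 0$; in that degenerate case the functional is identically zero and trivially a seminorm, so your piggyback argument needs only that one-line remark to cover the full hypothesis.
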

\begin{proof}
Clearly, $J_{\lambda}\Big( W\XI{\alpha b}\Big)= |\alpha|J_{\lambda}\Big( W\XI{b}\Big)$, for any scalar $\alpha\in \mathbb{R}$. Moreover, for any $a,b\in \mathbb{R}^p$, it holds $W\XI{a+b}\preceq W\XI{a} + W\XI{b}$, and the triangle inequality could be proved similarly as in the previous proposition.
\end{proof}

\subsection{Proximal gradient method}
Consider unconstrained optimization problem of form
\begin{equation}
\label{07030001}
\minimize{b} \ f(b) = g(b) + h(b),
\end{equation}
where $g$ and $h$ are convex functions and $g$ is differentiable (for example LASSO and SLOPE are of such form). There exist efficient methods, namely {\itshape proximal gradient algorithms}, which could be applied to find numerical solution for such objective functions. To design efficient algorithms, however, $h$ must be prox-capable, meaning that there is known fast algorithm for computing the proximal operator for $h$, 
\begin{equation}
prox_{th}(y):=\argmin{b}\left\{\frac1{2t}\|y-b\|_2^2+h(b)\right\},
\end{equation}
for each $y\in\mathbb{R}^p$ and $t>0$.
The iterative algorithm work as follows. Suppose that in $k$ step $b^{(k)}$ is the current guess. Then, guess $b^{(k+1)}$ is given by
\begin{equation}
\label{17021824}
b^{(k+1)}:=\argmin{b}\left\{g\Big(b^{(k)}\Big)+\left\langle\nabla g\Big(b^{(k)}\Big),b-b^{(k)}\right\rangle+\frac1{2t}\|b-b^{(k)}\|_2^2+h(b)\right\}.
\end{equation} 
The two first terms in objective function in (\ref{17021824}) are Taylor approximation of $g$, third addend is a proximity term which is responsible for searching an update reasonably close and $t$ can be treated as a step size.

Problem (\ref{17021824}) could be reformulated to  
\begin{equation}
b^{(k+1)}:=\argmin{b}\left\{\frac12\left\|b^{(k)}-t\nabla g\big(b^{(k)}\big)-b\right\|_2^2+th(b)\right\},
\end{equation}
hence $b^{(k+1)}=prox_{th}\Big(b^{(k)}-t\nabla g\big(b^{(k)}\big)\Big)$, which justifies the need for existence of fast algorithm computing values of proximal operator. In each step the value of $t$ could be changed raising the sequence $\{t_i\}_{i=1}^{\infty}$. In situation when $g(b)=\frac12\|y-Xb\|_2^2$, we get following algorithm.
\begin{algorithm}
  \caption{Proximal gradient algorithm}
	\label{24021312}
  \begin{algorithmic}
		\State \textbf{input:} $b^{[0]}\in \mathbb{R}^p$,\ \ k=0
    \While{ ( Stopping criteria are not satisfied) }
		\State 1. $b^{[k+1]}=prox_{t_kh_{\lambda}}\Big(b^{[k]}-t_kX^\mathsf{T}\big(Xb^{[k]}-y\big)\Big)$;
		\State 2. $k\gets k+1$.
    \EndWhile
  \end{algorithmic}
\end{algorithm}

\noindent It is known that $t_i$'s could be selected in different ways to ensure that $f(b^{(k)})$ converges to the optimal value \cite{FISTA}, \cite{FISTA2}.
\subsection{Proximal operator for gSLOPE}
Let $I=\{I_1,\ldots,I_m\}$, $l_i$ be rank of submatrix $X_{I_i}$ for $i=1,\ldots,m$ and $\lambda=(\lambda_1,\ldots,\lambda_m)^\mathsf{T}$ be vector satisfying $\lambda_1 \geq\ldots\geq\lambda_m\geq0$. We will now employ the proximal gradient method to find the numerical solution to $\eqref{gSLOPE}$. As stated in subsection \ref{subsec:gs2711944}, we can focus on the equivalent optimization problem \eqref{24111029}, namely we aim to solve problem
\begin{equation}
\label{27111021}
b^*:=\argmin {b}\ \ \bigg\{\frac12\big\|y-\widetilde{X}b\big\|_2^2+\sigma J_{\lambda}\Big( W\iI{b}\Big)\bigg\},
\end{equation}
with $\II = \{\II_1,\ldots,\II_m\}$ being a partition of the set $\{1,\ldots,\widetilde{p}\}$ where $\widetilde{p}=l_1+\ldots+l_m$.

Without loss of generality we assume that $\sigma=1$. Since considered objective is of form (\ref{07030001}), we can apply proximal gradient algorithm, provided that norm $J_{\lambda, \II, W}$ is prox-capable. To compute the proximal operator for $J_{\lambda, \II, W}$ we we must be able to minimize $\frac1{2t}\|y-b\|_2^2+J_{\lambda, \II, W}(b),$ for any $y\in\mathbb{R}^{\widetilde{p}}$ and $t>0$. Multiplying objective by positive number, $t$, does not change the solution. Such operation leads to new objective function, $\frac12\|y-b\|_2^2+J_{t\lambda, \II, W}(b)$. This shows that it is enough to derive fast algorithm for finding numerical solution to problem
\begin{equation}
\label{07030029}
prox_J(y):=\argmin{b}\left\{\frac12\|y-b\|_2^2+J_{\lambda, \II, W}(b)\right\},
\end{equation}
which could be applicable to arbitrary sequence $\lambda_1\geq\ldots\geq\lambda_m\geq 0.$

We will start with situation when $W$ is identity matrix. Simply, then $prox_J(y)$ is proximal operator for function $J_{\lambda,\II}(b):=J_{\lambda}(\I{b})$. In such a case computing (\ref{07030029}) could be immediately reduced to finding prox for $J_{\lambda}$ norm, since thanks to (\ref{17022353}) we have
\begin{equation}
\left\{
\begin{array}{l}
c^*=\argmin{c}\left\{\frac12\big\|\iI{y}-c\big\|_2^2+J_{\lambda}(c)\right\}\\
\big(prox_J(y)\big)_{\II_i}=c^*_i \big(\|y_{\II_i}\|_2\big)^{-1}y_{\II_i} ,\quad i=1,\ldots,m
\end{array}.
\right.
\end{equation}
Consequently, $prox_J(y)$ could be obtained by applying two steps procedure: find $c^*$ by using fast prox algorithm for $J_{\lambda}$ for vector $\iI{y}$, and compute $prox_J(y)$ by applying simple calculus to $c^*$.

Consider now general situation with fixed positive numbers $w_1,\ldots, w_m$ and define diagonal matrix $M$ by conditions ${M_{\II_i,\II_i}:=w_i^{-1}\mathbf{I}_{l_i}}$, for $i=1,\ldots,m$. Then
\begin{equation}
J_{\lambda,\II,W}(b)=J_{\lambda}\big(W\iI{b}\big) = J_{\lambda}\big(\iI{M^{-1}b}\big)=J_{\lambda,\II}\big(M^{-1}b\big).
\end{equation}
Since $M$ is nonsingular, we can substitute $\eta:=M^{-1}b$ and consider equivalent formulation of \eqref{27111021}
\begin{equation}
\label{07031414}
\left\{
\begin{array}{l}
\eta^*:=\argmin{\eta}\left\{\frac12\|y-\X M\eta\|_2^2+J_{\sigma\lambda,\II}\big(\eta\big)\right\},\\
b^* = M\eta^*
%\big(prox_J(y)\big)_{I_i}=c^*_i \big(\|y_{I_i}\|_2\big)^{-1}y_{I_i} ,\quad i=1,\ldots,m
\end{array}.
\right.
\end{equation}
Therefore, after modifying the design matrix, gSLOPE can be always recast as problem with unit weights. Since $J_{\lambda,\II}$ is prox-capable, applying proximal gradient method to (\ref{07031414}) is straightforward. To implement method introduced in this article, we have used developed version of an Algorithm \ref{24021312}, the accelerated proximal gradient method known as FISTA \cite{FISTA}. In particular FISTA gives precise procedure for choosing steps sizes, to achieve fast convergence rate. To derive proper stopping criteria, we have considered dual problem to gSLOPE and employed the strong duality property. Detailed description was placed in the Appendix.

\section*{Acknowledgement}

We would like to thank Emmanuel Cand\`es for helpful remarks and suggestions and to Alexej Gossman for sharing with us his results and helpful discussions.  D.~B. and M.~B. are partially supported by European Union's 7th Framework Programme for research, technological development and demonstration under Grant Agreement no 602552. W.~S. was partially supported by a General Wang Yaowu Stanford Graduate Fellowship.
%We are very grateful to Alexej Gossmann for his
%\dz{Should we put Alexej Gossmann here ?}
%%%%%%%%%%%%%%%%%%%%%%%%%%%%%%%%%%%%%%%%%%%%%%%%%%%%%%%%%%%%%%%%%%%%%%%%%%%%%%%%%%%%%%%%%%%%%%%%%%%%%%%%%%%%%%%%%%%%%%%%%%%%%%%%%%%%%%%%%%
\vspace{20 pt}

%%\wjs{Would we like to use plain or natbib style for references?}
%%\bibliographystyle{ieeetr}
\bibliographystyle{plain}
\bibliography{gSLOPE}
%%%%%%%%%%%%%%%%%%%%%%%%%%%%%%%%%%%%%%%%%%%%%%%%%%%%%%%%%%%%%%%%%%%%%%%%%%%%%%%%%%%%%%%%%%%%%%%%%%%%%%%%%%%%%%%%%%%%%%%%%%%%%%%%%%%%%%%%%%
\newpage
\appendix
\section{$J_\lambda$ norm properties}
For nonnegative, nonincreasing sequence $\lambda_1 \geq \ldots \geq \lambda_p \geq 0$ consider function $ \mathbb{R}^p \ni b \longmapsto J_\lambda(b) \in \mathbb{R}$ given by $J_\lambda(b)=\sum_{i=1}^{p}\lambda_i \cdot |b|_{(i)}$, where $|b|_{(1)}\geq \ldots\geq |b|_{(p)}$ is the vector of sorted absolute values.
\begin{proposition} \label{Props1}
If $a$, $b \in \mathbb{R}^p$ are such that $|a| \preceq |b|$, then $|a|_{(\cdot)} \preceq |b|_{(\cdot)}$.
\end{proposition}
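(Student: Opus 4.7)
The plan is to establish $|a|_{(i)}\le |b|_{(i)}$ for every $i\in\{1,\ldots,p\}$ by means of a variational (order-statistic) characterization of the $i$th largest coordinate. Specifically, I would use the identity
\[
|a|_{(i)}=\max_{\substack{S\subset\{1,\ldots,p\}\\ |S|=i}}\,\min_{j\in S}|a_j|,
\]
which expresses the $i$th largest absolute value as the best ``worst-element'' achievable over all $i$-subsets of coordinates. This identity is an elementary fact about order statistics: if $S^\star$ is the set of indices of the $i$ largest entries of $|a|$, then $\min_{j\in S^\star}|a_j|=|a|_{(i)}$, showing the max is attained; conversely, any $i$-subset must contain at least one index whose absolute value is $\le |a|_{(i)}$ (otherwise we would have $i$ coordinates strictly larger, contradicting the definition of the $i$th largest), so the max cannot exceed $|a|_{(i)}$.

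Given this characterization, the conclusion is nearly immediate. For any fixed subset $S$ of size $i$, the hypothesis $|a|\preceq|b|$ gives $|a_j|\le |b_j|$ for every $j\in S$, hence $\min_{j\in S}|a_j|\le\min_{j\in S}|b_j|$. Taking the maximum over all $i$-subsets on both sides preserves the inequality, so
\[
|a|_{(i)}=\max_{|S|=i}\min_{j\in S}|a_j|\;\le\;\max_{|S|=i}\min_{j\in S}|b_j|=|b|_{(i)}.
\]
Since $i$ was arbitrary, this yields $|a|_{(\cdot)}\preceq|b|_{(\cdot)}$, as required.

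The only step deserving care is the justification of the max-min identity, but that is standard and follows from the two observations above; apart from it, the argument is a one-line monotonicity comment. No coupling of the sorting permutations of $a$ and $b$ is required, which is the main pitfall one might worry about (their sorting orders can differ), and the variational formulation sidesteps it cleanly.
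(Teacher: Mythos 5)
Your proof is correct, and it takes a genuinely (if mildly) different route from the paper's. The paper assumes WLOG that $a$ and $b$ are nonnegative and that $a_1\geq\ldots\geq a_p$, fixes $k$, and directly counts that at least $k$ coordinates of $b$ --- namely $b_1,\ldots,b_k$, since $b_j\geq a_j\geq a_k$ --- belong to $\{b_i:\ b_i\geq a_k\}$, whence $b_{(k)}\geq a_k=a_{(k)}$. You instead introduce the variational identity $|a|_{(i)}=\max_{|S|=i}\min_{j\in S}|a_j|$ as a key lemma and then conclude by monotonicity of $\max$ and $\min$. The two arguments are close in substance: your justification that the max is attained at the index set $S^\star$ of the $i$ largest entries plays exactly the role of the paper's exhibited $k$-element set, and your converse direction is the same pigeonhole/counting observation. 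What your packaging buys is modularity --- once the identity is accepted, the comparison is a one-line monotonicity statement, and it makes explicit that no coupling of the sorting permutations of $a$ and $b$ is required (the paper achieves the same decoupling implicitly by only ever sorting $a$). The paper's version is marginally shorter; yours costs a small auxiliary lemma but yields a reusable characterization of order statistics. Both are complete and elementary.
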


\begin{proof}
Without loss of generality we can assume that $a$ and $b$ are nonnegative and that it occurs $a_1\geq \ldots\geq a_p$. We will show that $a_k \leq b_{(k)}$ for $k\in \{1,\ldots,p\}$. Fix such $k$ and consider the set $S_k:=\{b_i:\  b_i \geq a_k\}$. It is enough to show that $|S_k|\geq k$.
For each $j\in \{1,\ldots,k\}$ we have
\begin{equation*}
b_j \geq a_j \geq a_k\ \Longrightarrow\ b_j\in S_k,
\end{equation*} 
what proves the last statement.
\end{proof}

\begin{corollary}
\label{17021046}
Let $a \in \mathbb{R}^p$, $b \in \mathbb{R}^p$ and $|a| \preceq |b|$ then Proposition (\ref{Props1}) instantly gives that ${J_\lambda (a) \leq J_\lambda (b)}$, since $J_\lambda (a)=\lambda^\mathsf{T}|a|_{(\cdot)} \leq \lambda^\mathsf{T}|b|_{(\cdot)}= J_\lambda (b)$.
\end{corollary}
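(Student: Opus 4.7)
The plan is to invoke Proposition~\ref{Props1} as the key input and then exploit the nonnegativity of the tuning sequence $\lambda_1,\ldots,\lambda_p$.

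First, I would apply Proposition~\ref{Props1} directly to the hypothesis $|a| \preceq |b|$. This yields the componentwise inequality $|a|_{(\cdot)} \preceq |b|_{(\cdot)}$ between the two sorted absolute-value vectors; that is, the $k$th largest absolute value of $a$ is no larger than the $k$th largest absolute value of $b$ for every $k \in \{1,\ldots,p\}$.

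Next, because $\lambda_i \geq 0$ for all $i$, taking a weighted sum with these nonnegative weights preserves the pointwise inequality. From $|a|_{(k)} \leq |b|_{(k)}$ for each $k$ it follows that
\[
\sum_{i=1}^p \lambda_i \, |a|_{(i)} \;\leq\; \sum_{i=1}^p \lambda_i \, |b|_{(i)}.
\]
Recognising the left-hand side as $J_\lambda(a)$ and the right-hand side as $J_\lambda(b)$ by definition of the sorted $\ell_1$ norm completes the argument.

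There is essentially no obstacle here beyond Proposition~\ref{Props1} itself, whose proof rests on a short counting argument showing that for each $k$ at least $k$ entries of $|b|$ dominate $|a|_{(k)}$. Once that proposition is in hand, the corollary reduces to the near-trivial observation that taking an inner product against a nonnegative vector is monotone with respect to the componentwise order on its argument.
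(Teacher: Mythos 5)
Your proposal is correct and matches the paper's own argument exactly: the paper likewise applies Proposition~\ref{Props1} to obtain $|a|_{(\cdot)} \preceq |b|_{(\cdot)}$ and then concludes $J_\lambda(a)=\lambda^\mathsf{T}|a|_{(\cdot)} \leq \lambda^\mathsf{T}|b|_{(\cdot)}=J_\lambda(b)$ from the nonnegativity of $\lambda$. Nothing is missing.
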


%\begin{proposition}[Pulling to zero] \label{Props4}
%For fixed sequence $\lambda_1\geq\ldots\geq\lambda_p\geq0$, let $b\in \mathbb{R}^p$ be such that $b_j>0$ for some ${j\in \{1,\ldots,p\}}$. For $\varepsilon\in (0,b_j]$ define:
%$$ (b_\varepsilon)_i=\left\{\begin{array}{ll} b_j-\varepsilon, & i=j\\b_i,& \textrm{otherwise} \end{array} \right.. $$
%Then:
%\newline
%(i) $J_\lambda(b_\varepsilon) \leq J_\lambda(b)$,
%\newline
%(ii) if $\lambda\succ 0$, then $J_\lambda(b_\varepsilon) < J_\lambda(b).$
%\end{proposition}
%\begin{proof}
%Let $\pi:\{1,\ldots,p\}\longrightarrow\{1,\ldots,p\}$ be permutation such as $\sum_{i=1}^p\lambda_i (b_\varepsilon)_{(i)}=\sum_{i=1}^p\lambda_{\pi(i)} (b_\varepsilon)_i$ for each $i$ in $\{1,\ldots,p\}$. From Theorem \ref{RearIn}
%\begin{equation}
%\label{06021616}
%J_{\lambda}(b)-J_{\lambda}(b_\varepsilon)=\sum_{i=1}^p\lambda_ib_{(i)}-\sum_{i=1}^p\lambda_{\pi(i)} (b_\varepsilon)_i\geq
%\sum_{i=1}^p\lambda_{\pi(i)}b_i-\sum_{i=1}^p\lambda_{\pi(i)} (b_\varepsilon)_i=\varepsilon\lambda_{\pi(j)}\geq0.
%\end{equation}
%If $\lambda\succ 0$, then the last inequality is strict.
%\end{proof}

\begin{proposition} \label{Props 3}
For fixed sequence $\lambda_1\geq\ldots\geq\lambda_p\geq0$, let $b\in \mathbb{R}^p$ be such that $b \succeq 0$ and $b_j > b_l$ for some ${j,l\in \{1,\ldots,p\}}$. For $0 < \varepsilon \leq (b_j-b_l)/2$, define $b_\varepsilon\in\mathbb{R}^p$ by conditions $(b_\varepsilon)_l:=b_l+\varepsilon$, $(b_\varepsilon)_j:=b_j-\varepsilon$ and $(b_\varepsilon)_i:=b_i$ for $i\notin\{j,l\}$. Then $J_\lambda(b_\varepsilon) \leq J_\lambda(b)$.
\end{proposition}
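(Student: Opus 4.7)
The plan is to prove the inequality by interpolating between $b$ and $b_\varepsilon$ and showing that $J_\lambda$ is nonincreasing along the interpolation. Define $b(t) := b - t e_j + t e_l$ for $t \in [0, \varepsilon]$, so $b(0) = b$ and $b(\varepsilon) = b_\varepsilon$. Because $\varepsilon \leq (b_j - b_l)/2$, throughout the interval we have $b(t)_j = b_j - t \geq b_l + t = b(t)_l \geq 0$, while every other coordinate equals the corresponding (nonnegative) coordinate of $b$. In particular $b(t) \succeq 0$, so $|b(t)|_{(\cdot)} = b(t)_{(\cdot)}$, the decreasing rearrangement of $b(t)$.

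Next I would study $f(t) := J_\lambda(b(t)) = \sum_{i=1}^p \lambda_i b(t)_{(i)}$. Since each coordinate of $b(t)$ is linear in $t$, the sort order of $b(t)$ changes only at finitely many values of $t$, and $f$ is continuous, piecewise linear on $[0,\varepsilon]$. On an interval of constant sort order, represented by a permutation $\sigma$ with $b(t)_{(i)} = b(t)_{\sigma(i)}$, let $r_j := \sigma^{-1}(j)$ and $r_l := \sigma^{-1}(l)$. Then
\[
f(t) = \sum_{i \notin \{r_j, r_l\}} \lambda_i\, b_{\sigma(i)} + \lambda_{r_j}(b_j - t) + \lambda_{r_l}(b_l + t),
\]
so $f'(t) = \lambda_{r_l} - \lambda_{r_j}$. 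Because $b(t)_j \geq b(t)_l$ on the interval, $j$ appears no later than $l$ in the sorted order, i.e.\ $r_j \leq r_l$. Monotonicity of $\{\lambda_i\}$ then gives $\lambda_{r_j} \geq \lambda_{r_l}$, hence $f'(t) \leq 0$ on each linearity piece.

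Since $f$ is continuous on $[0,\varepsilon]$ and has nonpositive derivative on every linearity piece, $f$ is nonincreasing on all of $[0,\varepsilon]$. Evaluating at the endpoints yields $J_\lambda(b_\varepsilon) = f(\varepsilon) \leq f(0) = J_\lambda(b)$.

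The only subtlety I anticipate is making the bookkeeping at the breakpoints airtight: when two coordinates of $b(t)$ tie and then swap, the permutation $\sigma$ is not unique, but by continuity of the sorted rearrangement the value of $f$ at the breakpoint is well-defined, and either choice of $\sigma$ yields the same one-sided slope formula above with $r_j \leq r_l$. This makes the global nonincreasingness of $f$ immediate and avoids any rearrangement-inequality gymnastics.
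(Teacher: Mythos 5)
Your proof is correct, but it takes a genuinely different route from the paper's. The paper proves the inequality in one discrete step: it picks a permutation $\pi$ realizing $J_\lambda(b_\varepsilon)=\sum_i\lambda_{\pi(i)}(b_\varepsilon)_i$ with the tie-break $\lambda_{\pi(j)}\ge\lambda_{\pi(l)}$ (possible because $(b_\varepsilon)_j\ge(b_\varepsilon)_l$), invokes the rearrangement inequality to get $J_\lambda(b)=\sum_i\lambda_i b_{(i)}\ge\sum_i\lambda_{\pi(i)}b_i$, and then observes that the remaining difference collapses to $\varepsilon(\lambda_{\pi(j)}-\lambda_{\pi(l)})\ge 0$. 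You instead run a continuous interpolation $b(t)=b-te_j+te_l$ and show $t\mapsto J_\lambda(b(t))$ is piecewise linear with nonpositive slope on each linearity piece, because $b(t)_j\ge b(t)_l$ forces the sorted rank of $j$ to be no later than that of $l$, hence $\lambda_{r_j}\ge\lambda_{r_l}$. Both arguments ultimately rest on the same fact --- the coordinate being decreased sits earlier in the sorted order than the one being increased, so it carries the larger weight --- but they package it differently: the paper's version is shorter once the rearrangement inequality is available, while yours is self-contained (no external inequality) and exhibits $J_\lambda$ as nonincreasing along the entire Robin-Hood transfer path, which generalizes immediately to any majorization-decreasing move. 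Your handling of the breakpoints is adequate: on the interior of $[0,\varepsilon]$ one has $b(t)_j>b(t)_l$ strictly, so every admissible sorting permutation on an open linearity interval satisfies $r_j<r_l$, and a tie between $j$ and $l$ can occur only at $t=\varepsilon$ itself, where continuity of the sorted rearrangement closes the argument.
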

\begin{proof}
Let $\pi:\{1,\ldots,p\}\longrightarrow\{1,\ldots,p\}$ be permutation such as $\sum_{i=1}^p\lambda_i (b_\varepsilon)_{(i)}=\sum_{i=1}^p\lambda_{\pi(i)} (b_\varepsilon)_i$ for each $i$ in $\{1,\ldots,p\}$ and $\lambda_{\pi(j)}\geq\lambda_{\pi(l)}$. From the rearrangement inequality (Theorem 368 in \cite{RearIneq}),
\begin{equation}
\label{06021616}
\begin{split}
J_{\lambda}(b)-J_{\lambda}(b_\varepsilon)=&\sum_{i=1}^p\lambda_ib_{(i)}-\sum_{i=1}^p\lambda_i(b_\varepsilon)_{(i)}=\sum_{i=1}^p\lambda_ib_{(i)}-\sum_{i=1}^p\lambda_{\pi(i)} (b_\varepsilon)_i\geq\\
&\sum_{i=1}^p\lambda_{\pi(i)}b_i-\sum_{i=1}^p\lambda_{\pi(i)} (b_\varepsilon)_i=\varepsilon\big(\lambda_{\pi(j)}-\lambda_{\pi(l)}\big)\geq0.
\end{split}
\end{equation}
\end{proof}

\section{Alternative representation of gSLOPE in the orthogonal case}\label{Sec:alt_rep}

\label{subsec:06232149}
Suppose that the experiment matrix is orthogonal at group level, i.e. it holds $X_{I_i}^\mathsf{T}X_{I_j} = \mathbf{0}$, for every $i,j\in \{1,\ldots,m\}$, $i\neq j$. In such a case, $\widetilde{X}$ in problem \eqref{24111029} is orthogonal matrix, i.e. $\widetilde{X}^\mathsf{T}\widetilde{X}=\mathbf{I}_{\widetilde{p}}$. If $n=\widetilde{p}$, i.e. $\widetilde{X}$ is a square and orthogonal matrix, we also have $\widetilde{X}\widetilde{X}^\mathsf{T} = \mathbf{I}_{\widetilde{p}}$ and it obeys $\|\widetilde{X}^\mathsf{T}b\|^2_2=b^\mathsf{T}\widetilde{X}\widetilde{X}^\mathsf{T}b=\|b\|^2_2$ for $b\in\mathbb{R}^{\widetilde{p}}$. For the general case with $n\geq \widetilde{p}$, we can extend $\widetilde{X}$ to a square matrix by adding new orthonormal columns and defining $\widetilde{X}_C:=\big [ \begin{BMAT}(b){c.c}{c}\widetilde{X}&C \end{BMAT} \big ]$, where $C$ is composed of vectors (columns) being some complement to orthogonal basis of $\mathbb{R}^{\widetilde{p}}$. For $y\in\mathbb{R}^n$ and $b \in \mathbb{R}^{\widetilde{p}}$ we get:
\begin{equation}\label{ortog} \Big\|y-\widetilde{X}b\Big\|^2_2=\Big\|\widetilde{X}_C^\mathsf{T}\left (y-\widetilde{X}b \right)\Big\|^2_2=\left\| \left[\begin{BMAT}(b,8pt,15pt){c}{c.c}\widetilde{X}^\mathsf{T}\\ C^\mathsf{T}\end{BMAT}\right]y - \left[\begin{BMAT}(b,8pt,15pt){c}{c.c} b\\ \mathbf{0}\end{BMAT}\right] \right\|^2_2=\Big \|\widetilde{X}^\mathsf{T}y-b\Big \|^2_2+const,\end{equation} 
which implies that under orthogonal situation the optimization problem in $(\ref{24111029})$ could be recast as 
\begin{equation}
\label{gSLOPE_ort}
\argmin b\ \ \left\{\frac 12\big\|\widetilde{y}-b\big\|_2^2+\sigma J_{\lambda}\big( W\iI{b}\big)\right\},
\end{equation}
for $\widetilde{y}: = \widetilde{X}^\mathsf{T}y$. After introducing new variable to problem (\ref{gSLOPE_ort}), namely $c\in\mathbb{R}^m$, we get the equivalent formulation
\begin{equation}
\label{07010812}
\argmin{b,c}\ \ \left\{\frac 12\big\|\widetilde{y}-b\big\|_2^2+\sigma J_{\lambda}(c):\ c=W\iI{b}\right\}.
\end{equation}
\begin{proposition}
\label{07061804}
Let $f(b,c):\mathbb{R}^p\times\mathbb{R}^m\longrightarrow\mathbb{R}$ be any function and consider optimization problem ${\argmin{b,c}\big\{f(b,c):\ (b,c)\in\mathcal{D}\big\}}$ with unique solution $(b^*,c^*)$ and feasible set $\mathcal{D}\subset\mathbb{R}^p\times\mathbb{R}^m$. Define $\mathcal{D}^c:=\big\{c\in\mathbb{R}^m|\ \exists b\in\mathbb{R}^p: (b,c)\in \mathcal{D}\big\}$. Suppose that for any $c\in\mathcal{D}^c$, there exists unique solution, $b^c$, to problem ${\argmin{b}\big\{f(b,c):\ (b,c)\in\mathcal{D}\big\}}$. Moreover, assume that the solution to ${\argmin{c}\big\{f(b^c,c):\ c\in\mathcal{D}^c\big\}}$ is unique. Then, it occurs
\begin{equation}
\label{07061532}
\left\{
\begin{array}{l}
c^*= \argmin{c}\big\{f(b^c,c):\ c\in\mathcal{D}^c\big\}\\
b^* = b^{c^*}
\end{array}
\right..
\end{equation}
\end{proposition}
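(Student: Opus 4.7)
The plan is to exploit the classical ``partial minimization'' identity: for any joint optimization one may first minimize in $b$ with $c$ held fixed and then minimize the resulting value function in $c$. All three uniqueness assumptions are present precisely to rule out the edge cases where different selectors could produce different but equally optimal pairs.

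First I would establish the key inequality: for every $(b,c)\in\mathcal{D}$, the definition of $b^c$ as the unique minimizer of $f(\cdot,c)$ over the feasible slice gives $f(b^c,c)\le f(b,c)$. Applied at $(b^*,c^*)$, this yields $f(b^{c^*},c^*)\le f(b^*,c^*)$. On the other hand $(b^{c^*},c^*)\in\mathcal{D}$ because $c^*\in\mathcal{D}^c$ (since $(b^*,c^*)\in\mathcal{D}$), so the joint optimality of $(b^*,c^*)$ forces $f(b^*,c^*)\le f(b^{c^*},c^*)$. Hence both are equal, and the uniqueness of $b^{c^*}$ as the minimizer of $f(\cdot,c^*)$ over its slice forces $b^*=b^{c^*}$, which is the second line of \eqref{07061532}.

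Next I would show $c^*$ solves the outer problem. For arbitrary $c\in\mathcal{D}^c$, the pair $(b^c,c)$ lies in $\mathcal{D}$, so joint optimality gives $f(b^*,c^*)\le f(b^c,c)$. Combined with $f(b^*,c^*)=f(b^{c^*},c^*)$ established above, this reads $f(b^{c^*},c^*)\le f(b^c,c)$ for every $c\in\mathcal{D}^c$, i.e.\ $c^*$ minimizes $c\mapsto f(b^c,c)$ over $\mathcal{D}^c$. The assumed uniqueness of that outer minimizer then pins down $c^*=\argmin_{c}\{f(b^c,c):c\in\mathcal{D}^c\}$, completing \eqref{07061532}.

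There is no real obstacle here; the argument is a two-line chain of inequalities plus an appeal to each uniqueness hypothesis. The only thing to be careful about is confirming $c^*\in\mathcal{D}^c$ and $(b^{c^*},c^*)\in\mathcal{D}$ before invoking the inner and joint optimality inequalities, which both follow directly from the definition of $\mathcal{D}^c$ and the fact that $(b^*,c^*)$ is feasible.
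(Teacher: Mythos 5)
Your proof is correct and rests on the same key inequality $f(b^c,c)\le f(b,c)$ for $(b,c)\in\mathcal{D}$ that drives the paper's argument; the only difference is direction, since you argue forward from the joint minimizer $(b^*,c^*)$ and use the uniqueness of the slice and outer minimizers to identify it with the two-stage solution, whereas the paper starts from the two-stage candidate, shows by contradiction that it solves the joint problem, and then invokes uniqueness of the joint solution. This is a cosmetic reversal rather than a genuinely different route, and your version is, if anything, slightly more explicit about which uniqueness hypothesis is used where.
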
 
\begin{proof}
Suppose that there exists $(b^0,c^0)\in\mathcal{D}$, such that $f(b^0,c^0)<f(b^*,c^*)$, where $b^*$ and $c^*$ are defined as in (\ref{07061532}). We have
\begin{equation}
f(b^{c^0},c^0)\leq f(b^0,c^0)<f(b^*,c^*)=f(b^{c^*},c^*),
\end{equation}
which leads to the contradiction with definition of $c^*$.
\end{proof}
We will apply the above proposition to (\ref{07010812}). Let $(b^*,c^*)$ be solution to (\ref{07010812}). Then $b^*$ is also solution to convex problem (\ref{gSLOPE_ort}) with strictly convex objective function and therefore is unique. Since $c^*=W\iI{b^*}$, $c^*$ is unique as well. In considered situation ${\mathcal{D}^c=\big\{c:\ c\succeq0\big\}}$. We will start with solving the problem $b^c=\argmin{b}\ \left\{\frac 12\big\|\widetilde{y}-b\big\|_2^2+\sigma J_{\lambda}(c):\ c=W\iI{b}\right\}.$ The additive constant in the objective could be omitted. Moreover, for each $i\in\{1,\ldots,m\}$ we have
\begin{equation}
\label{07011714}
b^c_{\II_i}=\argmin{b_{\II_i}}\left\{\big\|\widetilde{y}_{\II_i}-b_{\II_i}\big\|^2_2:\ w^2_i\|b_{\II_i}\|^2_2-c^2_i=0\right\}.
\end{equation}
The Lagrange Multipliers method quickly yields $b^c_{\II_i}=(w_i\|\widetilde{y}_{\II_i}\|_2)^{-1}c_i\widetilde{y}_{\II_i}$ and, consequently, it holds $\left\|\widetilde{y}_{\II_i}-b^c_{\II_i}\right\|_2^2=\left(\|\widetilde{y}_{\II_i}\|_2-w_i^{-1}c_i\right)^2.$
From Proposition \ref{07061804}, we get the following procedure for solution, $b^*$, to problem (\ref{gSLOPE_ort})
\begin{equation}
%\label{17022353}
\left\{
\begin{array}{l}
c^*=\argmin{c}\left\{\frac12\sum_{i=1}^m\big(\|\widetilde{y}_{\II_i}\|_2-w^{-1}_ic_i\big)^2+J_{\sigma\lambda}(c)\right\}\\
b^*_{\II_i}=c^*_i \big(w_i\|\widetilde{y}_{\II_i}\|_2\big)^{-1}\widetilde{y}_{\II_i} ,\quad i=1,\ldots,m
\end{array}
\right.
\end{equation}
(notice that we applied Proposition \ref{cnonneg} to omit the constraints $c\succeq0$ and that the objective function in definition of $c^*$ is strictly feasible, which guarantees the unique solution. The above procedure yields conclusion, that indices of groups estimated by gSLOPE as relevant coincide with the support of solution to SLOPE problem with diagonal matrix having inverses of weights $w_1,\ldots,w_m$ on diagonal. Moreover, after defining $\widetilde{\beta}\in\mathbb{R}^{\widetilde{p}}$ by conditions $\widetilde{\beta}_{\II_i}:=R_i\beta_{I_i}$, $i=1,\ldots,m$, we simply have $\iI{\widetilde{\beta}} = \XI{\beta}$ and
\begin{equation}
\widetilde{y}=\widetilde{X}^Ty=\widetilde{X}^T\bigg(\sum_{i=1}^mU_iR_i\beta_{I_i}+z\bigg) = \widetilde{X}^T\big(\widetilde{X}\widetilde{\beta}+z\big) = \widetilde{\beta}+\widetilde{X}^Tz,\quad \textrm{hence}\ \widetilde{y}\sim \mathcal{N}\big(\widetilde{\beta},\ \sigma \mathbf{I}_{\widetilde{p}}\big).
\end{equation}
Summarizing, if the assumption about the orthogonality at groups level is in use, one can consider the statistically equivalent model $\widetilde{y}\sim \mathcal{N}\big(\widetilde{\beta},\ \sigma \mathbf{I}_{\widetilde{p}}\big)$, define truly relevant groups via the support of $\iI{\widetilde{\beta}}$ and  treat the vector $\iI{b^*}=\big(\frac{c^*_1}{w_1},\ldots, \frac{c^*_m}{w_m}\big)$ as an gSLOPE estimate of group effect sizes, where $b^*$ and $c^*$ are defined in \eqref{17022353}, i.e. it holds $\iI{b^*}=\XI{\beta^\ES{gS}}$ for any solution $\beta^\ES{gS}$ to problem $\ref{gSLOPE}$.

\section{SLOPE with diagonal experiment matrix}

Let $y\in\mathbb{R}^p$ be fixed vector and $d_1,\ldots,d_p$ be positive numbers. We will use notation $diag(d_1,\ldots,d_p)$ to define the diagonal matrix $D$ such as $D_{i,i}=d_i$ for $i=1,\ldots, p$. Denote $d:=(d_1,\ldots, d_p)^\mathsf{T}$ and let $b^*$ be the solution to SLOPE optimization problem with diagonal experiment matrix, i.e. the solution to
\begin{equation}
\label{diagSLOPE}
\minimize{b}\,f(b): = \frac 12\big\|y-Db\big\|_2^2+J_{\lambda}\big( b\big).
\end{equation} 
Since $f$ is strictly convex function, the solution to (\ref{diagSLOPE}) is unique. It is easy to observe, that changing sign of $y_i$ corresponds to changing sign at $i$th coefficient of solution as well as permuting coefficients of $y$ together with $d_i's$ permutes coefficients of $b^*$. We will summarize this observations below without proofs.
%\begin{definition}
%Suppose that some permutation $\pi:\{1,\ldots,p\}\longrightarrow\{1,\ldots,p\}$ is given. We will say that matrix $P_{\pi}$ is permutation matrix corresponding to permutation $\pi$, when $P_{\pi}$ is of form
%$P_{\pi}=[e_{\pi(1)},\ldots, e_{\pi(p)} ]^T,$ where $e_i$ is column vector with one in the $i$th and zero in every other position.
%\end{definition}
\begin{proposition}
\label{PropPrem}
Let $\pi:\{1,\ldots,p\}\longrightarrow\{1,\ldots,p\}$ be given permutation with $P_{\pi}$ as corresponding matrix. Then:
\newline i) $P_{\pi}DP_{\pi}^\mathsf{T}=diag(d_{\pi(1)},\ldots, d_{\pi(p)})$;
\newline ii) $b_{\pi}:=P_{\pi}b^*$ is solution to 
$\minimize{b}\,f_{\pi}(b): = \frac12\Big\|P_{\pi}y-P_{\pi}DP_{\pi}^\mathsf{T}b\Big\|_2^2+ J_\lambda(b);$
\newline iii) $b_S:=Sb^*$ is solution to
$\minimize{b}\,f_S(b): = \frac12\Big\|Sy-Db\Big\|_2^2+ J_\lambda(b),$
\newline where $S$ is diagonal matrix with entries on diagonal coming from set $\{-1,1\}$.
\end{proposition}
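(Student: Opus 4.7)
The plan is to treat the three parts as essentially reparametrizations of the original problem (\ref{diagSLOPE}), using the orthogonality of $P_\pi$ and $S$ together with the symmetry properties of $J_\lambda$.

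For part (i), I would just compute the $(i,j)$ entry of $P_\pi D P_\pi^\mathsf{T}$ directly. With the convention $(P_\pi)_{ik}=\delta_{k,\pi(i)}$ we get $(P_\pi D P_\pi^\mathsf{T})_{ij}=\sum_{k,l}\delta_{k,\pi(i)}\,d_k\,\delta_{k,l}\,\delta_{l,\pi(j)}=d_{\pi(i)}\delta_{ij}$, which is precisely $\mathrm{diag}(d_{\pi(1)},\ldots,d_{\pi(p)})$. This is a one-line index calculation.

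For part (ii), the idea is to show $f_\pi(P_\pi b')=f(b')$ for every $b'\in\mathbb{R}^p$, so that minimizers correspond under the bijection $b\mapsto P_\pi b$. Substituting $b=P_\pi b'$ and using $P_\pi^\mathsf{T}P_\pi=\mathbf{I}_p$ collapses the quadratic term to $\tfrac12\|P_\pi(y-Db')\|_2^2=\tfrac12\|y-Db'\|_2^2$ since $P_\pi$ is an isometry. The penalty satisfies $J_\lambda(P_\pi b')=J_\lambda(b')$ because $J_\lambda$ depends only on the multiset of absolute values of the coordinates and a permutation leaves this multiset unchanged. Combining, $f_\pi(P_\pi b')=f(b')$, so uniqueness of the minimizer $b^*$ of $f$ forces $b_\pi=P_\pi b^*$ to be the unique minimizer of $f_\pi$.

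Part (iii) runs along exactly the same lines with $S$ in place of $P_\pi$: substitute $b=Sb'$, use that $S$ is diagonal (hence commutes with $D$) and orthogonal (hence an isometry) to reduce the fit term to $\tfrac12\|y-Db'\|_2^2$, and use sign-invariance of $J_\lambda$ (since $|\pm b'_i|=|b'_i|$) to get $J_\lambda(Sb')=J_\lambda(b')$. Again $f_S(Sb')=f(b')$, so $b_S=Sb^*$ is the unique minimizer of $f_S$.

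The statement is largely mechanical and I do not anticipate a genuine obstacle; the only point worth stating explicitly is the invariance $J_\lambda(P_\pi b')=J_\lambda(Sb')=J_\lambda(b')$, which is immediate from the definition $J_\lambda(b)=\sum_i\lambda_i|b|_{(i)}$ in terms of sorted absolute values.
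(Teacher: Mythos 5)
Your proof is correct; the paper states Proposition \ref{PropPrem} explicitly \emph{without} proof (it introduces these facts as easy observations "without proofs"), and your argument --- the one-line index computation for (i), and for (ii)--(iii) the change of variables $b=P_{\pi}b'$ (resp.\ $b=Sb'$) combined with orthogonality of $P_{\pi}$ and $S$, commutativity of the diagonal matrices $S$ and $D$, and the permutation- and sign-invariance of $J_{\lambda}$ --- is exactly the intended routine verification, with the uniqueness of the minimizer correctly justified by strict convexity of $f$.
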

%\begin{proof}
%To prove $1$ it is enough to show that
%\begin{equation}
%e_j^T(P_{\pi}DP_{\pi}^T)e_i=(P_{\pi}^{-1}e_j)^TDP_{\pi}^{-1}e_i=e_{\pi(j)}^TDe_{\pi(i)}=\left\{\begin{array}{ll}d_{\pi(i)}&, i=j\\0&,i\neq j\end{array}\right..
%\end{equation}
%To prove $2$ suppose that for some $\widetilde{b}$ we have $f_{\pi}(\widetilde{b})<f_{\pi}(b_{\pi})$. Then 
%$$\frac12\Big\|P_{\pi}y-P_{\pi}DP_{\pi}^T\widetilde{b}\Big\|_2^2+ J_\lambda(\widetilde{b})<\frac12\Big\|P_{\pi}y-P_{\pi}DP_{\pi}^TP_{\pi}b^*\Big\|_2^2+ J_\lambda(P_{\pi}b^*).$$
%This yields
%$$\frac12\Big\|y-D(P_{\pi}^T\widetilde{b})\Big\|_2^2+ J_\lambda(P_{\pi}^T\widetilde{b})<\frac12\Big\|y-Db^*\Big\|_2^2+ J_\lambda(b^*),$$
%which contradicts the optimality of $b^*$. The last claim follows simply from the analogical reasoning. 
%\end{proof}
\begin{proposition}
\label{cnonneg}
If $y\succeq 0$, then $b^*\succeq 0$.
\end{proposition}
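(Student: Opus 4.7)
The plan is to symmetrize: set $\widetilde b := |b^*|$ (componentwise absolute value) and show that $\widetilde b$ is also a minimizer of $f$, then conclude $\widetilde b = b^*$ from uniqueness of the SLOPE solution (which holds because $f$ is strictly convex, as noted immediately before the statement).

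First I would use the fact that $J_\lambda$ depends only on the sorted absolute values of its argument, so $J_\lambda(\widetilde b) = J_\lambda(b^*)$ automatically. For the data-fit term, since $D$ is diagonal, $\tfrac12\|y - Db\|_2^2$ splits coordinatewise. On each index $i$ with $b^*_i \geq 0$ the two fits coincide, and on each $i$ with $b^*_i < 0$ a direct computation (writing $b^*_i = -c$ with $c > 0$) gives $(y_i - d_i b^*_i)^2 - (y_i - d_i |b^*_i|)^2 = (y_i + d_i c)^2 - (y_i - d_i c)^2 = 4 y_i d_i |b^*_i| \geq 0$, using $y_i \geq 0$ and $d_i > 0$. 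Summing yields $\|y - D\widetilde b\|_2^2 \leq \|y - Db^*\|_2^2$, hence $f(\widetilde b) \leq f(b^*)$. Therefore $\widetilde b$ is also a minimizer, and uniqueness forces $\widetilde b = b^*$, i.e.\ $|b^*_i| = b^*_i$ for every $i$, which is exactly $b^* \succeq 0$.

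There is essentially no obstacle here; the only point worth noting is that the coordinatewise inequality need not be strict (equality occurs precisely when $y_i = 0$ at every coordinate where $b^*_i < 0$), so the argument genuinely relies on uniqueness of the minimizer rather than on a strict decrease of the objective. An equivalent packaging would be to invoke Proposition~\ref{PropPrem}(iii) with $S$ the diagonal sign matrix flipping exactly the negative coordinates of $b^*$: then $Sb^*$ solves the SLOPE problem with data $Sy$, but $Sy$ and $y$ differ only at coordinates where $y_i = 0$, so $Sy = y$ and $Sb^* = b^*$ by uniqueness, which again gives $b^* \succeq 0$.
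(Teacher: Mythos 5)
Your main argument is correct, and it is close in spirit to the paper's proof: both rest on the same coordinatewise comparison of $(y_i-d_ib_i)^2$ before and after modifying the sign of a negative coordinate. The difference is in how the degenerate case is closed. The paper argues by contradiction one coordinate at a time and splits into two cases: if $y_r=0$ it replaces $b_r<0$ by $0$, which weakly decreases $J_\lambda$ (via Corollary \ref{17021046}) and \emph{strictly} decreases the quadratic term by $\tfrac12 d_r^2b_r^2>0$; if $y_r>0$ it flips the sign, which preserves $J_\lambda$ and strictly decreases the quadratic term by $2y_rd_r|b_r|>0$. In either case it gets a strict improvement and never needs uniqueness. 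You instead flip all negative coordinates at once, accept only a weak improvement, and finish with strict convexity of $f$ and uniqueness of the minimizer; since uniqueness is established in the paper immediately before the statement, this is a perfectly valid (and arguably cleaner) finishing move. What the paper's two-case version buys is independence from uniqueness; what yours buys is a single competitor and one computation.

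One caveat: the ``equivalent packaging'' in your last sentence does not go through as stated. If $S$ flips exactly the negative coordinates of $b^*$, then $Sy=y$ requires $y_i=0$ at every flipped coordinate, which is not known a priori --- it is essentially part of what is being proved. So the claim that ``$Sy$ and $y$ differ only at coordinates where $y_i=0$'' (and hence $Sy=y$) is unjustified, and that variant should be dropped. Your primary argument is self-contained and does not need it.
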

\begin{proof}
Suppose that for some $r$ it occurs $b_r<0$ for any $b\in \mathbb{R}^p$. If $y_r=0$, then taking $\widehat{b}$ defined as $\widehat{b}_i:=\left\{\begin{array}{ll}0,&i=r\\ b_i,&\textrm{otherwise}\end{array}\right.$, we get $|\widehat{b}|\preceq |b|$ and Corollary \ref{17021046} gives $J_{\lambda}(\widehat{b})\leq J_{\lambda}(b)$. Consequently,
\begin{equation*}
f(b)-f(\widehat{b})\geq\frac 12\big\|y-Db\big\|_2^2 - \frac 12\big\|y-D\widehat{b}\big\|_2^2= \frac12(y_r-d_rb_r)^2-\frac12(y_r+d_r\widehat{b}_r)^2=\frac12d_r^2b_r^2>0.
\end{equation*}
Hence $b$ could not be the solution. Now consider case when $y_r>0$ and define $\widehat{b}$ by putting $\widehat{b}_i:=\left\{\begin{array}{ll}-b_r,&i=r\\ b_i,&\textrm{otherwise}\end{array}\right..$
Then we have $J_{\lambda}(b)=J_{\lambda}(\widehat{b})$ and
\begin{equation*}
f(b)-f(\widehat{b})=\frac12(y_r-d_rb_r)^2-\frac12(y_r+d_rb_r)^2=-2y_rd_rb_r>0.
\end{equation*}
and, as before, $b$ could not be optimal.
\end{proof}
\begin{proposition}
\label{06021048}
Let $b^*$ be the solution to problem (\ref{diagSLOPE}), $\{y_i\}_{i=1}^p$ be nonnegative sequence, $\{d_i\}_{i=1}^p$ be the sequence of positive numbers and assume that
\begin{equation}
d_1y_1\geq\ldots\geq d_py_p.
\end{equation}
If $b^*$ has exactly $r$ nonzero entries for $r>0$, then the set $\{1,\ldots,r\}$ corresponds to the support of $b^*$. 
\end{proposition}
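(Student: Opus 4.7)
The plan is a contradiction argument combining the rearrangement inequality from Proposition~\ref{Props 3} with the uniqueness of $b^*$. Suppose for contradiction that $b^*$ has exactly $r$ nonzero entries but $\mathrm{supp}(b^*) \neq \{1,\ldots,r\}$. A simple counting argument then produces indices $l < j$ with $b^*_l = 0$ and $b^*_j > 0$. From Proposition~\ref{cnonneg} we already know $b^* \succeq 0$, and the hypothesis that $d_i y_i$ is nonincreasing in $i$ combined with $l < j$ yields $d_l y_l \ge d_j y_j$. The goal is then to exhibit an admissible perturbation $\tilde b$ with $f(\tilde b) < f(b^*)$, contradicting strict convexity (hence uniqueness, because $D$ is invertible so $\tfrac12\|y-Db\|_2^2$ is strictly convex in $b$).

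For a small $\varepsilon \in (0,\,b^*_j/2]$, I define $\tilde b$ by $\tilde b_l := \varepsilon$, $\tilde b_j := b^*_j - \varepsilon$, and $\tilde b_i := b^*_i$ for $i \notin \{l,j\}$. Since $b^*_j > b^*_l = 0$ and $\varepsilon \le (b^*_j - b^*_l)/2$, Proposition~\ref{Props 3} directly applies, giving
\[
J_\lambda(\tilde b) \;\le\; J_\lambda(b^*).
\]
A direct expansion of $\|y - D\tilde b\|_2^2 - \|y - Db^*\|_2^2$ (only the $l$-th and $j$-th coordinates differ) gives
\[
\tfrac12\|y - D\tilde b\|_2^2 - \tfrac12\|y - Db^*\|_2^2
\;=\; \varepsilon\bigl(d_j y_j - d_l y_l - d_j^2\, b^*_j\bigr) + \tfrac12\varepsilon^2\bigl(d_l^2 + d_j^2\bigr).
\]
The bracketed linear coefficient is at most $-d_j^2 b^*_j < 0$ by the assumed ordering and the positivity of $d_j$ and $b^*_j$, so for all sufficiently small $\varepsilon > 0$ the data-fit term strictly decreases. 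Combined with the penalty estimate, $f(\tilde b) < f(b^*)$, the desired contradiction.

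The main obstacle I anticipate is book-keeping: aligning the admissibility constraint $\varepsilon \le b^*_j/2$ needed to invoke Proposition~\ref{Props 3} with the choice of $\varepsilon$ small enough that the $O(\varepsilon^2)$ remainder in the data-fit expansion does not swamp the strictly negative linear term. Both requirements amount to taking $\varepsilon$ below an explicit positive constant depending on $d_l, d_j, b^*_j$, so there is no genuine difficulty; once the perturbation is in place, the rest of the argument is a one-line computation.
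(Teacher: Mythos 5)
Your proposal is correct and follows essentially the same route as the paper: perturb $b^*$ by moving mass $\varepsilon$ from a nonzero coordinate $j$ to a zero coordinate $l$ with $l<j$, invoke Proposition~\ref{Props 3} to control the penalty, and use the ordering $d_ly_l\ge d_jy_j$ together with $d_j^2b_j^*>0$ to make the data-fit term strictly decrease for small $\varepsilon$, contradicting optimality (the paper phrases the counting step as reducing to adjacent indices $j-1,j$, which is an immaterial difference). The only cosmetic quibble is that the contradiction is with the optimality of $b^*$, not with strict convexity or uniqueness.
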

\begin{proof}
It is enough to show that $$\big(j\in \{2,\ldots,m\},\ \ b^*_j\neq 0\big)\ \Longrightarrow\ b^*_{j-1} \neq 0.$$ Suppose that this is not true. From Proposition \ref{cnonneg} we know that $b^*$ is nonnegative, hence we can find $i$ from $\{2,\ldots,m\}$ such as $b_j^*>0$ and $b_{j-1}^*=0$. For $\varepsilon \in \big(0,b_j^*/2]$ define vector $b_{\varepsilon}$ by putting $(b_\varepsilon)_{j-1}:=\varepsilon$, $(b_\varepsilon)_j:=b_j^*-\varepsilon$ and $(b_\varepsilon)_i:=b^*_i$ for $i\notin\{j,l\}$. From Proposition \ref{Props 3} we have that $J_{\lambda}(b_{\varepsilon})\leq J_{\lambda}(b^*)$, which gives
\begin{equation}
\begin{split}
f(b^*)-f(b_{\varepsilon})&\geq \frac12\big(y_{j-1}-d_{j-1}b^*_{j-1}\big)^2 + \frac12\big(y_j-d_jb^*_j\big)^2 \\
&- \frac12\big(y_{j-1}-d_{j-1}b_{\varepsilon}(j-1)\big)^2 - \frac12\big(y_j-d_jb_{\varepsilon}(j)\big)^2=\\
&\varepsilon\left(A - \frac{d_{j-1}^2+d_j^2}2\cdot\varepsilon\right),\\
&\textrm{for }A:=(y_{j-1}d_{j-1}-y_jd_j)+d_j^2b_j^*>0.
\end{split}
\end{equation}
Therefore, we can find $\varepsilon>0$ such as $f(b^*)>f(b_{\varepsilon})$, which contradicts the optimality of $b^*$.
\end{proof}
Consider now problem (\ref{diagSLOPE}) with arbitrary sequence $\{y_i\}_{i=1}^p$. Suppose that $b^*$ has exactly $r>0$ nonzero coefficients and that $\pi:\{1,\ldots,p\}\longrightarrow\{1,\ldots,p\}$ is permutation which gives the order of magnitudes for $Dy$, i.e. $d_{\pi(1)}|y|_{\pi(1)}\geq\ldots\geq d_{\pi(p)}|y|_{\pi(p)}$. Basing on our previous observations, we get important
\begin{corollary}
\label{06021230}
If $b^*$ is the solution to (\ref{diagSLOPE}) having exactly $r>0$ nonzero coefficients and $\pi$ is permutation which places components of $D|y|$ in a nonincreasing order, i.e. $d_{\pi(i)}|y|_{\pi(i)}=|Dy|_{(i)}$ for $i=1,\ldots, p$, then the support of $b^*$ is composed of the set $\{\pi(1),\ldots,\pi(r)\}$.
\end{corollary}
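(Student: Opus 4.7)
The plan is to reduce the general diagonal SLOPE problem to the sorted, nonnegative setting already handled by Proposition~\ref{06021048}, by successively applying the sign and permutation invariances recorded in Proposition~\ref{PropPrem}. Because $b^*$ is unique and both sign-flip and permutation acts are invertible, the support of $b^*$ transforms in a transparent way under each reduction, and the conclusion will be read off from the sorted case.

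First I would choose the diagonal sign matrix $S=\operatorname{diag}(s_1,\ldots,s_p)$ with $s_i=\operatorname{sign}(y_i)$ (with the convention $s_i=1$ when $y_i=0$), so that $Sy=|y|\succeq 0$. By Proposition~\ref{PropPrem}~iii), $Sb^*$ is the unique solution to the problem obtained by replacing $y$ with $Sy$; and since $S$ is a signed diagonal, $Sb^*$ has exactly the same support as $b^*$. Thus it suffices to prove the statement when $y\succeq 0$, in which case $|Dy|=Dy$ and $\pi$ just orders $Dy$ in nonincreasing order.

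Next, with $y\succeq 0$ in hand, I would apply Proposition~\ref{PropPrem}~ii) with the permutation $\pi$: setting $b_\pi:=P_\pi b^*$, this vector is the unique solution to the SLOPE problem whose data vector is $P_\pi y$ (with $i$-th coordinate $y_{\pi(i)}$) and whose diagonal design matrix is $P_\pi D P_\pi^{\mathsf T}=\operatorname{diag}(d_{\pi(1)},\ldots,d_{\pi(p)})$. The hypothesis on $\pi$ is exactly that the new products $d_{\pi(i)} y_{\pi(i)}$ are nonincreasing in $i$, so this permuted problem satisfies the hypotheses of Proposition~\ref{06021048}. Moreover $b_\pi$ has exactly $r$ nonzero entries (same as $b^*$), so Proposition~\ref{06021048} forces $\operatorname{supp}(b_\pi)=\{1,\ldots,r\}$.

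Finally I would translate the support back: since $(b_\pi)_i=b^*_{\pi(i)}$, the equality $\operatorname{supp}(b_\pi)=\{1,\ldots,r\}$ means $b^*_{\pi(i)}\neq 0$ iff $i\in\{1,\ldots,r\}$, i.e.\ $\operatorname{supp}(b^*)=\{\pi(1),\ldots,\pi(r)\}$. No step appears to involve a genuine obstacle; the only points requiring care are the bookkeeping around the permutation convention (whether $(P_\pi y)_i=y_{\pi(i)}$ or $y_{\pi^{-1}(i)}$) and handling zero coordinates of $y$ when defining the sign matrix, both of which are routine.
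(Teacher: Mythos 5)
Your proposal is correct and follows essentially the same route the paper intends: the corollary is stated as an immediate consequence of the preceding observations, namely the sign-flip and permutation invariances of Proposition~\ref{PropPrem} combined with the sorted nonnegative case of Proposition~\ref{06021048}, which is precisely your reduction. Your explicit attention to the permutation convention and to zero coordinates of $y$ only makes the argument more careful than the paper's one-line justification.
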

The next three lemmas were proven in \cite{SLOPE} in situation when $d_1=\ldots=d_p=1$. We will follow the reasoning from this paper to prove the generalized claims. The main difference is that in general case the solution to considered problem (\ref{diagSLOPE}) does not have to be nonincreasingly ordered, under assumption that $d_1y_1\geq\ldots\geq d_py_p\geq0$ (which is the case for $d_1=\ldots=d_p=1$). This makes that generalizations of proofs presented in \cite{SLOPE} are not straightforward. 
\begin{lemma}
\label{lemma1}
Consider nonnegative sequence $\{y_i\}_{i=1}^p$ and sequence of positive numbers $\{d_i\}_{i=1}^p$ such as $d_1y_1\geq\ldots\geq d_py_p.$ If $b^*$ is solution to problem (\ref{diagSLOPE}) having exactly $r$ nonzero entries, then for every $j\leq r$ it holds that
\begin{equation}
\label{06291805}
\sum_{i=j}^r(d_iy_i-\lambda_i)>0
\end{equation}
and for every $j\geq r+1$
\begin{equation}
\label{06021637}
\sum_{i=r+1}^j(d_iy_i-\lambda_i)\leq0.
\end{equation}
\end{lemma}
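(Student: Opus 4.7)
The plan is to establish both inequalities via first-order perturbation arguments around $b^*$, exploiting that Propositions \ref{cnonneg} and \ref{06021048} already give $b^*_i > 0$ for $i = 1, \ldots, r$ and $b^*_i = 0$ for $i > r$. In each case, I would pick a direction $v$ along which $b^* + \varepsilon v$ stays in a region where the sorted order of $|b^* + \varepsilon v|$ is understood for small $\varepsilon > 0$, evaluate the first-order changes in the quadratic and $J_\lambda$ terms, and use $f(b^* + \varepsilon v) - f(b^*) \ge 0$.

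For the easier inequality \eqref{06021637}, I would fix $j > r$ and take $v = \sum_{i=r+1}^{j} e_i$. For $\varepsilon$ smaller than $\min_{i \le r} b^*_i$, the sorted absolute values of $b^* + \varepsilon v$ are the $r$ positive numbers $b^*_1, \ldots, b^*_r$ in some order, then $j-r$ copies of $\varepsilon$, then zeros, so $J_\lambda(b^* + \varepsilon v) - J_\lambda(b^*) = \varepsilon \sum_{i=r+1}^{j} \lambda_i$. The linear term from the quadratic is $-\varepsilon \sum_{i=r+1}^{j} d_i y_i$, and optimality of $b^*$ forces $\sum_{i=r+1}^{j}(d_i y_i - \lambda_i) \le 0$.

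For the strict inequality \eqref{06291805}, I would fix $j \le r$ and take $v = -\sum_{i=j}^{r} e_i$, which keeps all coordinates nonnegative for sufficiently small $\varepsilon > 0$. I would fix a permutation $\sigma$ that sorts $b^*_1, \ldots, b^*_r$ in decreasing order, breaking any ties by placing indices from $\{j, \ldots, r\}$ last; for small enough $\varepsilon$ this same $\sigma$ also sorts $b^* + \varepsilon v$ decreasingly. Consequently $J_\lambda(b^* + \varepsilon v) - J_\lambda(b^*) = -\varepsilon \sum_{i \in I^*} \lambda_i$ where $I^* = \{i \in \{1, \ldots, r\} : \sigma(i) \in \{j, \ldots, r\}\}$ has exactly $r - j + 1$ elements. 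Because $\lambda$ is nonincreasing, the minimum of $\sum_{i \in I^*} \lambda_i$ over all size-$(r-j+1)$ subsets of $\{1,\ldots,r\}$ equals $\sum_{i=j}^{r} \lambda_i$, so $\sum_{i \in I^*} \lambda_i \ge \sum_{i=j}^{r} \lambda_i$. Pairing this with the quadratic first-order term $\varepsilon \sum_{i=j}^{r} d_i(y_i - d_i b^*_i)$ and invoking optimality yields $\sum_{i=j}^{r} d_i(y_i - d_i b^*_i) \ge \sum_{i=j}^{r} \lambda_i$; finally, since $b^*_i > 0$ and $d_i > 0$ for $i \le r$, the term $\sum_{i=j}^{r} d_i^2 b^*_i$ is strictly positive and can be absorbed to produce the strict bound $\sum_{i=j}^{r}(d_i y_i - \lambda_i) > 0$.

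The main obstacle is the bookkeeping for $J_\lambda$ in the second perturbation: the hypothesis $d_1 y_1 \ge \cdots \ge d_p y_p$ together with Proposition \ref{06021048} pins down the support of $b^*$ as $\{1, \ldots, r\}$ but does not fix the order of $b^*_1, \ldots, b^*_r$, so I must carefully choose a sorting permutation (with tie-breaking adapted to the direction $v$) and then argue that the resulting subset $I^*$ is a size-$(r-j+1)$ subset of $\{1, \ldots, r\}$ in order to compare $\sum_{i \in I^*} \lambda_i$ with $\sum_{i=j}^{r} \lambda_i$. The strict inequality then hinges on the fact that all $b^*_i$ with $i \le r$ are strictly positive, which is exactly what Propositions \ref{cnonneg} and \ref{06021048} provide.
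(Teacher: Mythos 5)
Your proposal is correct and takes essentially the same route as the paper: the same two perturbations ($b^*_i \mapsto b^*_i - \varepsilon$ on $\{j,\dots,r\}$ for \eqref{06291805}, and $b^*_i \mapsto \varepsilon$ on $\{r+1,\dots,j\}$ for \eqref{06021637}), the same first-order optimality comparison, and the same source of strictness, namely $\sum_{i=j}^{r} d_i^2 b_i^* > 0$. The only cosmetic difference is in bounding the change of $J_\lambda$: you fix a common sorting permutation with tie-breaking and then minimize the subset sum $\sum_{i\in I^*}\lambda_i$ over size-$(r-j+1)$ subsets of $\{1,\dots,r\}$, whereas the paper reaches the identical bound $\sum_{i=j}^r\lambda_{\pi(i)}\ge\sum_{i=j}^r\lambda_i$ via the rearrangement inequality plus monotonicity of $\lambda$.
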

\begin{proof}
From Proposition \ref{06021048} we know that $b_i^*>0$ for $i\in\{1,\ldots,r\}$. Let us define $$\widetilde{b}_i:=\left\{\begin{array}{rl}b_i^*-h,&i\in\{j,\ldots,r\}\\b^*_i,&\textrm{otherwise}.\end{array}\right.,$$ where we restrict only to sufficiently small values of $h$, so as to the condition $\widetilde{b}_i>0$ is met for all $i$ from $\{j,\ldots,r\}$. For such $h$ we have $b^*_{(r+1)}=\ldots=b^*_{(p)}=\widetilde{b}_{(r+1)}=\ldots=\widetilde{b}_{(p)}=0$. Therefore there exists permutation $\pi:\{1,\ldots,r\}\longrightarrow\{1,\ldots,r\}$ such as $\sum_{i=1}^r\lambda_i\widetilde{b}_{(i)}=\sum_{i=1}^r\lambda_{\pi(i)}\widetilde{b}_i$. For such permutation we have
\begin{equation}
\label{06021616}
\begin{split}
J_{\lambda}(b^*)-J_{\lambda}(\widetilde{b})=&\sum_{i=1}^r\lambda_ib^*_{(i)}-\sum_{i=1}^r\lambda_i\widetilde{b}_{(i)}=\sum_{i=1}^r\lambda_ib^*_{(i)}-\sum_{i=1}^r\lambda_{\pi(i)}\widetilde{b}_i\geq\\
&\sum_{i=1}^r\lambda_{\pi(i)}b^*_i-\sum_{i=1}^r\lambda_{\pi(i)}\widetilde{b}_i=h\sum_{i=j}^r\lambda_{\pi(i)}\geq h\sum_{i=j}^r\lambda_i,
\end{split}
\end{equation}
where the first inequality follows from the rearrangement inequality and second is the consequence of monotonicity of $\{\lambda_i\}_{i=1}^p$. We also have
\begin{equation}
\label{06021617}
\begin{split}
\|y-Db^*\|_2^2-\|y-D\widetilde{b}\|_2^2=&\sum_{i=j}^r(y_i-d_ib_i^*)^2-\sum_{i=j}^r(y_i-d_ib_i^*+d_ih)^2=\\
&2h\sum_{i=j}^r(d_i^2b_i^*-d_iy_i)-h^2\sum_{i=j}^rd_i^2.
\end{split}
\end{equation}
Optimality of $b^*$, (\ref{06021616}) and (\ref{06021617}) yield
\begin{equation}
0\geq f(b^*)-f(\widetilde{b})\geq h\sum_{i=j}^r(d_i^2b_i^*-d_iy_i+\lambda_i)-\frac12h^2\sum_{i=j}^rd_i^2,
\end{equation}
for each $h$ from the interval $[0,\varepsilon]$, where $\varepsilon>0$ is some (sufficiently small) value. This gives ${\sum_{i=j}^r(d_i^2b_i^*-d_iy_i+\lambda_i)\leq 0}$ and consequently
\begin{equation}
\sum_{i=j}^r(d_iy_i-\lambda_i)\geq \sum_{i=j}^rd_i^2b_i^*>0.
\end{equation}
To prove claim (\ref{06021637}), consider a new sequence defined as $\widetilde{b}_i:=\left\{\begin{array}{rl}h,&i\in\{r+1,\ldots,j\}\\b^*_i,&\textrm{otherwise}.\end{array}\right..$ We will restrict our attention only to $0<h<\min\{b_i^*:\ i\leq	 r\},$ so as to $b^*_{(\cdot)}$ and $\widetilde{b}_{(\cdot)}$ are given by applying the same permutation to $b^*$ and $\widetilde{b}$, respectively. Moreover, for each $i$ from $\{r+1,\ldots,j\}$ it holds $\widetilde{b}_{(i)}=\widetilde{b}_i=h$. From optimality of $b^*$
\begin{equation*}
0\geq f(b^*)-f(\widetilde{b})=\frac12\sum_{i=r+1}^j\left(y_i^2-(y_i-d_ih)^2\right)-\sum_{i=r+1}^j\lambda_ih=h\sum_{i=r+1}^j(d_iy_i-\lambda_i)-\frac12h^2\sum_{i=r+1}^jd_i^2,
\end{equation*}
for all considered $h$, which leads to (\ref{06021637}). 
\end{proof}
\begin{lemma}
\label{lemma2}
Let $b^*$ be solution to problem (\ref{diagSLOPE}) with nonnegative, nonincreasing sequence $\{\lambda_i\}_{i=1}^p$. Let $R(b^*)$ be number of all nonzeros in $b^*$ and $r\geq1$. Then, for any $i\in\{1,\ldots,p\}$
$$\big\{y:\ b^*_i\neq 0\textrm{ and }R(b^*)=r\big\}=\big\{y:\ d_i|y_i|>\lambda_r\textrm{ and }R(b^*)=r\big\}.$$
\end{lemma}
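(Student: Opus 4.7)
The plan is to reduce the general setting to the canonical case where $y \succeq 0$ and $d_i y_i$ is nonincreasing in $i$, and then extract the claim from Lemma~\ref{lemma1} combined with Corollary~\ref{06021230}. The key observation is that both events appearing in the statement, $\{b_i^*\neq 0,\ R(b^*)=r\}$ and $\{d_i|y_i|>\lambda_r,\ R(b^*)=r\}$, are invariant under the symmetries of the problem: sign flips of $y_i$ (Proposition~\ref{PropPrem}(iii) just flips the sign of $b_i^*$, leaving nonvanishing and $|y_i|$ untouched) and index permutations (Proposition~\ref{PropPrem}(ii) permutes the coordinates of $b^*$ and of $(d_i,y_i)$ consistently). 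So it is harmless to assume from the start that $y \succeq 0$ and $d_1 y_1 \geq d_2 y_2 \geq \ldots \geq d_p y_p$, the assumption under which Lemma~\ref{lemma1} is stated.

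Under this canonical ordering, the next step is to pin down the support of $b^*$. By Proposition~\ref{cnonneg}, $b^* \succeq 0$, and by Proposition~\ref{06021048} (or equivalently Corollary~\ref{06021230}, since the ordering permutation is the identity), the support of $b^*$ is exactly $\{1,\ldots,r\}$ when $R(b^*)=r \geq 1$. Thus $b_i^* \neq 0$ iff $i \leq r$, and the claim reduces to showing that $i \leq r \iff d_i y_i > \lambda_r$.

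The forward direction is handled by the first inequality in Lemma~\ref{lemma1}: taking $j=r$ in \eqref{06291805} gives $d_r y_r - \lambda_r > 0$, hence for every $i \leq r$ the monotonicity assumption yields $d_i y_i \geq d_r y_r > \lambda_r$. For the backward direction, we apply the second half of Lemma~\ref{lemma1}: taking $j=r+1$ in \eqref{06021637} gives $d_{r+1} y_{r+1} \leq \lambda_{r+1} \leq \lambda_r$, so for every $i>r$ monotonicity again yields $d_i y_i \leq d_{r+1} y_{r+1} \leq \lambda_r$. Taking the contrapositive, $d_i y_i > \lambda_r$ forces $i \leq r$. This establishes the equality of the two sets.

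The main obstacle is really just the bookkeeping associated with the reduction: one must verify that passing back through the permutation and sign-change conjugations of Proposition~\ref{PropPrem} preserves both of the defining conditions $\{b_i^*\neq 0\}$ and $\{d_i|y_i|>\lambda_r\}$ at each index. Once this invariance is made explicit, the remaining argument is a direct reading of Lemma~\ref{lemma1} at the two telescoping endpoints $j=r$ and $j=r+1$, combined with the monotonicity of the reordered sequence $d_i y_i$.
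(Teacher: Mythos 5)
Your proposal is correct and follows essentially the same route as the paper: reduce to the nonnegative, ordered case via the symmetries of Proposition \ref{PropPrem}, identify the support as $\{1,\ldots,r\}$ (equivalently $\{\pi(1),\ldots,\pi(r)\}$ via Corollary \ref{06021230}), and read off both inclusions from Lemma \ref{lemma1} at $j=r$ and $j=r+1$ together with the monotonicity of $d_iy_i$. The paper merely phrases the reduction as an explicit conjugation by $P_\pi S$ and tracks the indices $\pi(r)$, $\pi(r+1)$ rather than declaring the ordering WLOG, which is a cosmetic difference.
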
 
\begin{proof} 
Suppose that $b^*$ has $r>0$ nonzero coefficients and let $\pi$ be permutation which places components of $D|y|$ in a nonincreasing order. From Corollary \ref{06021230} it holds that $\{i:\ b^*_i\neq0\} = \{\pi(1),\ldots, \pi(r)\}$. Define $\widetilde{y}:=P_{\pi}Sy$ and $\widetilde{D}:=P_{\pi}DP_{\pi}\T{T}$, for $S$ being the diagonal matrix such as $S_{i,i}=sgn(y_i)$. Then $P_{\pi}Sb^*$ is solution to problem
\begin{equation}
\label{06291803}
\argmin{b} \frac12\left\|\widetilde{y}-\widetilde{D}b\right\|^2_2+ J_{\lambda}(b),
\end{equation} %(since $Sb^*=|b^*|$ from Proposition \ref{cnonneg})
which satisfies the assumptions of Lemma \ref{lemma1}. Taking $j=r$ in (\ref{06291805}) and $j=r+1$ in $(\ref{06021637})$ we immediately get
\begin{equation}
\label{06291817}
d_{\pi(r)}|y|_{\pi(r)}>\lambda_r\ \ \textrm{and}\ \ d_{\pi(r+1)}|y|_{\pi(r+1)}\leq \lambda_{r+1}.
\end{equation}
We will now show that $\big\{y:\ b^*_i\neq 0\textrm{ and }R(b^*)=r\big\}\subset\big\{y:\ d_i|y_i|>\lambda_r\textrm{ and }R(b^*)=r\big\}.$ Fix $i\in\{1,\ldots,p\}$ and suppose that $b^*_i$ is nonzero coefficient. Then $i\in\{\pi(1),\ldots,\pi(r)\}$ and therefore $d_i|y_i|\geq d_{\pi(r)}|y|_{\pi(r)}>\lambda_r$, thanks to first inequality from (\ref{06291817}).
To show the second inclusion assume that $d_i|y_i|>\lambda_r$. Then, from the second inequality in (\ref{06291817}), $d_i|y_i|>\lambda_{r+1}\geq d_{\pi(r+1)}|y|_{\pi(r+1)}$, which gives $i\in\{\pi(1),\ldots,\pi(r)\}.$
\end{proof}
\begin{lemma}
\label{lemma3}
For given sequence $\{y_i\}_{i=1}^p$, sequence of positive numbers $\{d_i\}_{i=1}^p$, nonincreasing, nonnegative sequence $\{\lambda_i\}_{i=1}^p$ and fixed $j\in \{1,\ldots,p\}$, consider a following procedure
\begin{itemize}
\item define ${\widetilde{y}:=(y_1,\ldots,y_{j-1},y_{j+1},\ldots,y_p)^\mathsf{T}}$, $\widetilde{D}:=diag(d_1,\ldots,d_{j-1},d_{j+1},\ldots,d_p)$, $\widetilde{d}_i:=\widetilde{D}_{i,i}$ for $i=1,\ldots,p-1$ and ${\widetilde{\lambda}:=(\lambda_2,\ldots,\lambda_p)^\mathsf{T}}$;
\item find $\widetilde{b}^*:=\argmin{b\in\mathbb{R}^{p-1}}\frac{1}{2}\big\|\widetilde{y}-\widetilde{D}b\big\|_2^2+J_{\widetilde{\lambda}}(b);$
\item define $\widetilde{R}^j(\widetilde{b}^*):=|\{i:\ \widetilde{b}^*_i\neq 0\}|.$
\end{itemize}
Then for $r\geq1$ it holds $\big\{y:\ d_j|y_j|>\lambda_r\textrm{ and }R(b^*)=r\big\}\subset\big\{y:\ d_j|y_j|>\lambda_r\textrm{ and }\widetilde{R}^j(\widetilde{b}^*)=r-1\big\}.$
\end{lemma}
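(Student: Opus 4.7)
The plan is to reduce to the sorted nonnegative setting and pin down $\tilde r := \widetilde R^j(\widetilde b^*)$ at exactly $r-1$ by comparing the inequalities of Lemma~\ref{lemma1} applied to both the full and the reduced problem. By Proposition~\ref{PropPrem} (and sign-flip invariance) I assume $y\succeq 0$ and $d_1y_1\ge d_2y_2\ge\ldots\ge d_py_p\ge 0$; under this sorting the deleted index $j$ corresponds to some sorted position $s$. Corollary~\ref{06021230} places the support of $b^*$ at $\{1,\ldots,r\}$, and $d_sy_s=d_j|y_j|>\lambda_r$ together with Lemma~\ref{lemma2} forces $s\in\{1,\ldots,r\}$. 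The reduced data $(\widetilde y,\widetilde D)$ is $(y,D)$ with position $s$ deleted (still sorted) and the shifted penalty is $\widetilde\lambda_{i'}=\lambda_{i'+1}$. Throughout I write (A), (B) for the strict-tail and nonstrict-head inequalities of Lemma~\ref{lemma1} applied to the original problem at $r$, and (A'), (B') for the analogues applied to the reduced problem at $\tilde r$.

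For the upper bound $\tilde r\le r-1$: if $\tilde r\ge r$ then (A') at $j'=r$ has every index $i'\in[r,\tilde r]$ at or above $s$ in the reduced indexing, so the shift $i=i'+1$ rewrites it as $\sum_{i=r+1}^{\tilde r+1}(d_iy_i-\lambda_i)>0$, directly contradicting (B) at $j=\tilde r+1$. For the lower bound, suppose $\tilde r\le r-2$ and split on $s$ relative to $\tilde r$. If $s\le\tilde r$, (B') at $j'=r-1$ has every index strictly above $s$, translating to $\sum_{i=\tilde r+2}^r(d_iy_i-\lambda_i)\le 0$, which contradicts (A) at $j=\tilde r+2$. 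The boundary case $s=\tilde r+1\le r-1$ is similar: (B') at $j'=r-1$ collapses to $\sum_{i=s+1}^r(d_iy_i-\lambda_i)\le 0$, contradicting (A) at $j=s+1$.

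I expect the main obstacle to be the remaining case $s\ge\tilde r+2$: (B') at $j'=r-1$ now straddles position $s$ and becomes the mixed sum $\sum_{i=\tilde r+1}^{s-1}(d_iy_i-\lambda_{i+1})+\sum_{i=s+1}^{r}(d_iy_i-\lambda_i)\le 0$, where a termwise comparison with (A) no longer works. The way around is a telescoping identity: subtracting this from (A) at $j=\tilde r+1$ cancels every $d_iy_i$ contribution except $d_sy_s$, and collapses the residual $\lambda$-differences via $\sum_{i=\tilde r+1}^{s-1}(\lambda_{i+1}-\lambda_i)=\lambda_s-\lambda_{\tilde r+1}$, leaving the clean residual $d_sy_s>\lambda_{\tilde r+1}$. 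Combined with the sorted order $d_iy_i\ge d_sy_s$ for $i\le s$ and the monotonicity $\lambda_{i+1}\le\lambda_{\tilde r+1}$ for $i\ge\tilde r+1$, each term $d_iy_i-\lambda_{i+1}$ is strictly positive on $[\tilde r+1,s-1]$, so $\sum_{i=\tilde r+1}^{s-1}(d_iy_i-\lambda_{i+1})>0$. But this sum is precisely (B') at $j'=s-1$, which bounds it by zero --- the desired contradiction. Combining the three cases yields $\tilde r=r-1$, hence the claimed inclusion.
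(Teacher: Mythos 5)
Your argument is correct, and it reaches the conclusion by a genuinely different mechanism than the paper, even though the outer skeleton is the same: both proofs reduce to the case $y\succeq 0$, $d_1y_1\ge\ldots\ge d_py_p$ via Proposition \ref{PropPrem}, both use Lemma \ref{lemma2} to place the deleted index inside $\{1,\ldots,r\}$ (the paper's (\ref{08020010})), and both rule out "too few" and "too many" nonzeros separately. The difference is in how each case is killed. The paper never invokes Lemma \ref{lemma1} for the reduced problem: it constructs an explicit competitor to $\widetilde{b}^*$ (raising or lowering a block of coordinates by a small $h$), shows the reduced objective strictly decreases, and needs Lemma \ref{lemma1} only for the \emph{original} data, handled uniformly via the termwise domination $\widetilde{d}_{i-1}\widetilde{y}_{i-1}\ge d_iy_i$, which holds no matter where the deleted position sits. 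You instead apply Lemma \ref{lemma1} to \emph{both} problems and show that the necessary inequalities (\ref{06291805})--(\ref{06021637}) at level $r$ for the original data and at level $\tilde r$ for the reduced data are jointly inconsistent unless $\tilde r=r-1$ --- in effect exploiting that Lemma \ref{lemma1}'s inequalities certify at most one sparsity level. This buys you a proof with no competitor construction, at the price of the case analysis on where $s$ falls; I checked the delicate straddling case $s\ge\tilde r+2$, and it is sound: the telescoping of (A) at $j=\tilde r+1$ against the translated (B$'$) at $j'=r-1$ does collapse to $d_sy_s>\lambda_{\tilde r+1}$ (which, as you note, does \emph{not} follow from the hypothesis $d_sy_s>\lambda_r$ alone since $\lambda_{\tilde r+1}\ge\lambda_r$), and the resulting termwise positivity of $\sum_{i=\tilde r+1}^{s-1}(d_iy_i-\lambda_{i+1})$ contradicts (B$'$) at $j'=s-1$ exactly as claimed. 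The paper's route is shorter and avoids the case split; yours isolates a reusable structural fact, namely that the Lemma \ref{lemma1} inequalities pin down the support size uniquely.
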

\begin{proof}
We have to show that solution $\widetilde{b}^*$ to problem
\begin{equation}
\label{07022239}
\minimize{b}\ F(b): = \frac{1}{2}\sum_{i=1}^{p-1}\left(\widetilde{y}_i-\widetilde{d}_ib_i\right)^2+\sum_{i=1}^{p-1}\widetilde{\lambda}_ib_{(i)}
\end{equation}
has exactly $r-1$ nonzero coefficients. From Proposition \ref{PropPrem} we know that the change of signs of $y_i$'s does not affect the support, hence without loss of generality we can assume that $\widetilde{y}\succeq 0$, and $\widetilde{b}^*\succeq 0$ as a result (from Proposition \ref{cnonneg}). We will start with situation when $d_1y_1\geq\ldots\geq d_py_p$ and consequently $\widetilde{d}_1\widetilde{y}_1\geq\ldots\geq \widetilde{d}_{p-1}\widetilde{y}_p$. If $j$ is fixed index such as $d_j|y_j|>\lambda_r$ and $R(b^*)=r$, this gives
\begin{equation}
\label{08020010}
j\in\{1,\ldots,r\}.
\end{equation}
To show that solution to (\ref{07022239}) has at least $r-1$ nonzero entries, suppose by contradiction that $\widetilde{b}^*$ has exactly $k-1$ nonzero entries with $k<r$. Let us define $\widehat{b}\in\mathbb{R}^{p-1}$ as
$$\widehat{b}_i:=\left\{\begin{array}{ll}h,&i\in\{k,\ldots,r-1\}\\ \widetilde{b}^*_i,&\textrm{otherwise}\end{array}\right.,$$
where $0<h<\min\{\widetilde{b}_1^*,\ldots,\widetilde{b}_{k-1}^*\}$. Then 
\begin{equation}
F(\widetilde{b}^*)-F(\widehat{b})=h\sum_{i=k}^{r-1}(\widetilde{d}_i\widetilde{y}_i-\widetilde{\lambda}_i)-h^2\sum_{i=k}^{r-1}\frac12\widetilde{d}_i^2.
\end{equation}
Now
\begin{equation}
\sum_{i=k}^{r-1}(\widetilde{d}_i\widetilde{y}_i-\widetilde{\lambda}_i)=\sum_{i=k+1}^r(\widetilde{d}_{i-1}\widetilde{y}_{i-1}-\lambda_i)\geq \sum_{i=k+1}^r(d_iy_i-\lambda_i)>0,
\end{equation}
where the first equality follows from $\widetilde{\lambda}_i=\lambda_{i+1}$, the first inequality from $\widetilde{d}_{i-1}\widetilde{y}_{i-1}\geq d_iy_i$ and the second from Lemma \ref{lemma1}. If $h$ is small enough, we get $F(\widehat{b})<F(\widetilde{b}^*)$ which leads to contradiction.

Suppose now by contradiction that $\widetilde{b}^*$ has $k$ nonzero entries with $k\geq r$ and define
$$\widehat{b}_i:=\left\{\begin{array}{ll}\widetilde{b}^*_i-h,&i\in\{r,\ldots,k\}\\ \widetilde{b}^*_i,&\textrm{otherwise}\end{array}\right..$$
Analogously to (\ref{06021616}), we get
$J_{\widetilde{\lambda}}(\widetilde{b}^*)-J_{\widetilde{\lambda}}(\widehat{b})\geq h\sum_{i=r}^k\widetilde{\lambda}_i$ and consequently
\begin{equation}
F(\widetilde{b}^*)-F(\widehat{b})\geq h\left[\sum_{i=r}^k(\widetilde{\lambda}_i-\widetilde{d}_i\widetilde{y}_i)+\sum_{i=r}^k\widetilde{d}_i^2\widetilde{b}^*_i\right]-\frac12h^2\sum_{i=r}^k\widetilde{d}^2_i.
\end{equation}
Now
\begin{equation}
\sum_{i=r}^k(\widetilde{\lambda}_i-\widetilde{d}_i\widetilde{y}_i)=\sum_{i=r+1}^{k+1}(\lambda_i-d_iy_i)\geq0,
\end{equation}
where the first equality follows from definition of $\widetilde{\lambda}$ and ($\ref{08020010}$), while the inequality follows from Lemma \ref{lemma1}. If $h$ is small enough, we get $F(\widehat{b})<F(\widetilde{b}^*)$, which contradicts the optimality of $\widetilde{b}^*$.

Consider now general situation, i.e. without assumption concerning the order of $D|y|$. Suppose that $\pi$, with corresponding matrix $P_{\pi}$, is permutation which orders $D|y|$. Define $y_{\pi}: = P_{\pi}y$ and $D_{\pi}: =P_{\pi}DP_{\pi}^\mathsf{T}$. Applying the procedure described in the statement of Lemma simultaneously to $(y, D, \lambda)$ for $j$, and to $(y_{\pi}, D_{\pi}, \lambda)$ for $\pi(j)$ we end with $\big(\widetilde{y},\widetilde{D},\widetilde{\lambda},\widetilde{R}_1^j(\widetilde{b}^*)\big)$ and $\big(\widetilde{y_{\pi}},\widetilde{D_{\pi}},\widetilde{\lambda},\widetilde{R}_2^{\pi(j)}(\widetilde{b}_{\pi}^*)\big)$. It is straightforward to see, that there exists permutation $\widetilde{\pi}:\{1,\ldots,p-1\}\longrightarrow \{1,\ldots,p-1\}$ such that $\widetilde{y_{\pi}}=P_{\widetilde{\pi}}\widetilde{y}$ and $\widetilde{D_{\pi}} = P_{\widetilde{\pi}}\widetilde{D}P_{\widetilde{\pi}}^\mathsf{T}$. From Proposition \ref{PropPrem} we have that $\widetilde{b}_{\pi}^*=P_{\widetilde{\pi}}\widetilde{b}^*$ and $\widetilde{R}_1^j(\widetilde{b}^*)=\widetilde{R}_2^{\pi(j)}(\widetilde{b}_{\pi}^*)$. Moreover, from the first part of proof $\widetilde{R}_2^{\pi(j)}(\widetilde{b}_{\pi}^*)=r-1,$ which gives the claim. 
\end{proof}
%%%%%%%%%%%%%%%%%%%%%%%%%%%%%%%%%%%%%%%%%%%%%%%%%%%%%%%%%%%%%%%%%%%%%%%%%%%%%%%%%%%%%%%%%%%%%%%%%%%%%%%%%%%%%%%%%%%%%%%%%%%%%%%%%%%%%%%%%%

%%%%%%%%%%%%%%%%%%%
\newcommand{\E}{\mathbb{E}}
\renewcommand{\P}{\mathbb{P}}
\newcommand{\goto}{\rightarrow}
\newcommand{\var}{\operatorname{Var}}
\newcommand{\iid}{i.i.d. }
\newcommand{\floor}[1]{\lfloor #1 \rfloor}
\renewcommand{\d}{\mathrm{d}}
%%%%%%%%%%%%%%%%%%%%
\section{Minimax estimation of gSLOPE}
\label{sec:minim-estim-gslope}
\begin{proof}[Proof of Theorem~\ref{minimax}]
Once again we will employ the equivalent formulation of gSLOPE under assumption about orthogonality at groups level, i.e. problem \eqref{17022353}, and we will consider statistically equivalent model $\widetilde{y}\sim \mathcal{N}\big(\widetilde{\beta},\ \sigma \mathbf{I}_{\widetilde{p}}\big)$, with $\widetilde{\beta}_{\II_i}=R_i\beta_{I_i}$, $i=1,\ldots,m$. Then $\XI{\beta}=\iI{\Beta}$ and for solution $b^*$ to \eqref{17022353} it holds $\iI{b^*}=\XI{\beta^\ES{gS}}$ for any solution $\beta^\ES{gS}$ to problem $\eqref{gSLOPE}$.
Without loss of generality, assume $\sigma = 1$. Note that $\|\y_{I_i}\|_2^2$ is distributed as the noncentral $\chi^2_{l_i}(\|\Beta_{\II_i}\|_2^2)$, where $\|\Beta_{\II_i}\|_2^2$ is the noncentrality. 

The lower bound of the minimax risk can be obtained as follows. For each $\II_{i}$, only $\Beta_j$ with the smallest index $j \in \II_i$ is \textit{possibly} nonzero and the rest $l_i-1$ components of $\Beta_{\II_i}$ are fixed to be zero. Then, this is reduced to a simple Gaussian sequence model with length $m$ and sparsity at most $k$. Given the condition $k/m \goto 0$, this classical sequence model has minimax risk $(1+o(1)) 2k\log(m/k)$ (see e.g. \cite{donoho1994minimax}).

Our next step is to evaluate the worst risk of gSLOPE over the nearly black project. We would completes the proof if we show this worst risk is bounded above by $(1+o(1)) 2k\log(m/k)$. For simplicity, assume that $\|\Beta_{\II_i}\|_2 = 0$ for all $i \ge k+1$ and write $\mu_i = \|\Beta_{\II_i}\|_2, \zeta_i = \|\y_{\II_i}\|_2 \sim \chi_{l_i}(\mu_i^2)$. Denote by $\widehat\zeta$ the SLOPE solution. Then, the risk is
\[
\E\|\widehat\zeta - \mu\|_2^2 = \E \sum_{i=1}^k (\widehat\zeta_i - \mu_i)_2^2 + \E\sum_{i=k+1}^m \widehat\zeta_i^2.
\]
Then, it suffices to show
\begin{equation}\label{eq:on_risk}
\E \left[ \sum_{i=1}^k (\widehat\zeta_i - \mu_i)^2 \right] \le (1+o(1)) 2k\log(m/k)
\end{equation}
and
\begin{equation}\label{eq:off_risk}
\E \left[ \sum_{i=k+1}^m \widehat\zeta_i^2 \right] = o(1)2k\log(m/k).
\end{equation}
Below, Lemmas~\ref{lm:off_supp1}, \ref{lm:off_supp2}, and \ref{lm:off_supp3} together give \eqref{eq:off_risk}. The remaining part of this proof serves to validate \eqref{eq:on_risk}. To start with, we employ the representation $\zeta_i^2 = (\xi_{i1} + \mu_i)^2 + \xi_{i2}^2 + \cdots + \xi_{i l_i}^2$ for \iid $\xi_{ij} \sim \mathcal{N}(0,1)$ (we can assume this representation without loss of generality, since the distribution of $(\xi_{i1} + a_1)^2 + (\xi_{i2}+a_2)^2 + \cdots + (\xi_{i l_i}+a_{l_i})^2$ depends only on the non-centrality $a_1^2 + \cdots + a_{l_i}^2$). As in the proof of Lemma 3.2 in \cite{WE}, we get
\begin{equation}\label{eq:tri_risk}
\begin{aligned}
\sum_{i=1}^k (\widehat\zeta_i - \mu_i)^2  &\le  \left( \|\widehat\zeta_{[1:k]} - \zeta_{[1:k]}\|_2 + \|\zeta_{[1:k]} - \mu_{[1:k]}\|_2 \right)^2\\
& \le \left( \|\lambda_{[1:k]}\|_2 + \|\zeta_{[1:k]} - \mu_{[1:k]}\|_2 \right)^2.
\end{aligned}
\end{equation}
As $l$ is fixed and $k/m \goto 0$, \cite{chisquareQ} gives $\lambda_i \sim \sqrt{2\log\frac{m}{qi}}$ for all $i \le k$. From this we know
\begin{equation}\label{eq:lambda_sum}
\|\lambda_{[1:k]}\|_2^2 = \sum_{i=1}^k \lambda_i^2 \sim 2k\log\frac{m}{k}.
\end{equation}
Next, we see
\begin{multline}\nonumber
\left| \sqrt{(\xi_{i1} + \mu_i)^2 + \xi_{i2}^2 + \cdots + \xi_{i l_i}^2} - \mu_i \right| \le \sqrt{\xi_{i2}^2 + \cdots + \xi_{i l_i}^2} + |\xi_{i1}| \\
\le 2\sqrt{\xi_{i1}^2 + \xi_{i2}^2 + \cdots + \xi_{i l_i}^2} \equiv 2\|\xi_i\|_2,
\end{multline}
which yields
\begin{equation}\label{eq:chi_sum}
\|\zeta_{[1:k]} - \mu_{[1:k]}\|_2^2 \le 4\sum_{i=1}^k \|\xi_i\|_2^2
\end{equation}
Note that $\sum_{i=1}^k \|\xi_i\|_2^2$ is distributed as the chi-square with $l_1 + \cdots + l_k \le lk$ degrees of freedom. Taking \eqref{eq:lambda_sum} and \eqref{eq:chi_sum} together, from \eqref{eq:tri_risk} we get
\begin{align*}
\E \left[ \sum_{i=1}^k (\widehat\zeta_i - \mu_i)^2 \right]  &\le \|\lambda_{[1:k]}\|_2^2 +\E \|\zeta_{[1:k]} - \mu_{[1:k]}\|_2^2 + 2\|\lambda_{[1:k]}\|_2 \E \|\zeta_{[1:k]} - \mu_{[1:k]}\|_2\\
&\le (1+o(1))2k\log\frac{m}{k} + 4lk + 2\sqrt{(1+o(1))2k\log\frac{m}{k}} \cdot \sqrt{4lk}\\
& \sim (1+o(1))2k\log\frac{m}{k},
\end{align*}
where the last step makes use of $m/k \goto \infty$. This establishes \eqref{eq:on_risk} and consequently completes the proof.

\end{proof}

The following three lemmas aim to prove \eqref{eq:off_risk}. Denote by $\zeta_{(1)} \ge \cdots \ge \zeta_{(m-k)}$ the order statistics of $\zeta_{k+1}, \ldots, \zeta_m$. Recall that $\zeta_i \sim \chi_{l_i}$ for $i \ge k+1$. As in the proof of Lemma 3.3 in \cite{WE}, we have
\[
\sum_{i=k+1}^m \widehat\zeta_i^2 \le \sum_{i=1}^{m-k} (\zeta_{(i)} - \lambda_{k+i})_+^2,
\]
where $x_+ = \max\{x, 0\}$. For a sufficiently large constant $A > 0$ and sufficiently small constant $\alpha > 0$ both to be specified later, we partition the sum into three parts:
\[
\sum_{i=1}^{m-k} \E (\zeta_{(i)} - \lambda_{k+i})_+^2 = \sum_{i=1}^{\floor{Ak}} \E (\zeta_{(i)} - \lambda_{k+i})_+^2 + \sum_{i=\lceil Ak \rceil }^{\floor{\alpha m}} \E (\zeta_{(i)} - \lambda_{k+i})_+^2 + \sum_{i=\lceil \alpha m \rceil }^{m-k} \E (\zeta_{(i)} - \lambda_{k+i})_+^2
\]
The three lemmas, respectively, show that each part is negligible compared with $2k\log(m/k)$. We indeed prove a stronger version in which the order statistics $\zeta_{(1)} \ge \cdots \ge \zeta_{(m-k)} \ge \zeta_{(m-k+1)} \ge \cdots \ge \zeta_{(m)}$ come from $m$ \iid $\chi_l$. Let $U_1, \ldots, U_m$ be \iid uniform random variables on $(0, 1)$, and $U_{(1)} \le U_{(2)} \le \cdots \le U_{(m)}$ be the \textit{increasing} order statistics. So we have the representation $\zeta_{(i)} = F^{-1}_{\chi_l}(1 - U_{(i)})$

\begin{lemma}\label{lm:off_supp1}
Under the preceding conditions, for any $A > 0$ we have
\[
\frac1{2k\log(m/k)} \sum_{i=1}^{\lfloor Ak \rfloor} \E(\zeta_{(i)} - \lambda_{k+i})_+^2 \rightarrow 0.
\]
\end{lemma}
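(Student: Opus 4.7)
The plan is to estimate each summand $\E(\zeta_{(i)} - \lambda_{k+i})_+^2$ by replacing both $\zeta_{(i)}$ and $\lambda_{k+i}$ with their tail-asymptotic forms, following the strategy used for SLOPE in Lemma~3.3 of \cite{WE} but adapted to the $\chi_l$ quantile. The first step is to invoke the classical asymptotic $F^{-1}_{\chi_l}(1-u) = \sqrt{2\log(1/u)} + O(\log\log(1/u)/\sqrt{\log(1/u)})$ as $u\to 0$ (see \cite{chisquareQ}); this is applicable since both $U_{(i)}$ and $q(k+i)/m$ concentrate near zero for $i \le \lfloor Ak\rfloor$ in the regime $k/m\to 0$. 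Consequently $\zeta_{(i)}$ and $\lambda_{k+i}$ can be replaced by $\sqrt{2\log(1/U_{(i)})}$ and $\sqrt{2\log(m/(q(k+i)))}$ modulo additive $O(1)$ corrections.

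Next, note that $(\zeta_{(i)}-\lambda_{k+i})_+>0$ exactly when $U_{(i)} < q(k+i)/m$. On that event, applying the identity $(\sqrt a-\sqrt b)^2 \le (a-b)^2/b$ with $a = 2\log(1/U_{(i)})$ and $b = 2\log(m/(q(k+i)))$, and using the uniform lower bound $b \ge 2\log(m/((A+1)k)) \sim 2\log(m/k)$ valid for all $i \le Ak$, yields
\[
(\zeta_{(i)} - \lambda_{k+i})_+^2 \;\le\; \frac{2W_i^2}{\log(m/k)}(1+o(1)) \;+\; O(1), \qquad W_i \;:=\; \log_+\!\Bigl(\tfrac{q(k+i)}{mU_{(i)}}\Bigr).
\]
Taking expectation and using $mU_{(i)} \sim \operatorname{Beta}(i,m-i+1)$, which is well approximated by $\Gamma(i,1)$ when $i/m\to 0$, a direct moment calculation gives $\E W_i^2 \le C\bigl(1 + \log^2((k+i)/i)\bigr)$ for a constant $C$ depending only on $q$ and $l$; the case $i=1$ reduces to $\E(\log(q(k+1)) - \log Y)_+^2$ with $Y \sim \operatorname{Exp}(1)$, which is explicit and of order $\log^2 k$.

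Summing and using the integral estimate $\sum_{i=1}^k \log^2(1+k/i) = O\bigl(k\int_0^1 \log^2(1+1/y)\,dy\bigr) = O(k)$, the total becomes
\[
\sum_{i=1}^{\lfloor Ak\rfloor}\E(\zeta_{(i)}-\lambda_{k+i})_+^2 \;\le\; \frac{O(k)}{\log(m/k)} + O(k),
\]
so after dividing by $2k\log(m/k)$ the ratio is $O(1/\log^2(m/k)) + O(1/\log(m/k))$, both tending to $0$ since $m/k\to \infty$. The main obstacle lies in the handling of the top few order statistics ($i$ of order $1$), where $U_{(i)}$ may be as small as $1/m$ and $\zeta_{(i)}$ can therefore approach $\sqrt{2\log m}$; the identity $(\sqrt a-\sqrt b)^2 \le (a-b)^2/b$ is the key device that converts this large magnitude into the tame log-ratio $W_i$ with bounded moments, while the integral comparison $\int_0^1 \log^2(1+1/y)\,dy < \infty$ then absorbs the individual per-term contributions of order $\log^2 k$ over the $O(k)$ small-$i$ terms.
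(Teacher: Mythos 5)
Your proposal is correct and follows essentially the same route as the paper's proof: the quantile asymptotics from \cite{chisquareQ}, the key bound $(\sqrt a-\sqrt b)_+^2\le (a-b)^2/b$ converting the difference into $\log^2\bigl(q(k+i)/(mU_{(i)})\bigr)/\log(m/(q(k+i)))$, the moments of $\log U_{(i)}$ for the Beta order statistic, and the integral comparison giving $\sum_{i\le Ak}\log^2(1+k/i)=O(k)$. The only cosmetic differences are that you truncate to the event $U_{(i)}<q(k+i)/m$ and approximate $mU_{(i)}$ by a $\Gamma(i,1)$ variable, where the paper drops the positive part and computes the exact Beta log-moments before splitting the sum into three pieces.
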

\begin{proof}[Proof of Lemma~\ref{lm:off_supp1}]
Recognizing that $l$ is fixed, from \cite{chisquareQ} it follows that
\[
F^{-1}_{\chi_l}(1 - q_1) - F^{-1}_{\chi_l}(1 - q_2) \sim \sqrt{2\log\frac1{q_1}} - \sqrt{2\log\frac1{q_2}}
\]
for $q_1, q_2 \goto 0$. We also know that $\zeta_i$ is distributed as $F^{-1}_{\chi_l}(1 - U_{(i)})$. Making use of these facts, we get
\begin{equation}\nonumber
\begin{aligned}
\E (\zeta_{(i)} - \lambda_{k+i})_+^2 &= \E (F^{-1}_{\chi_l}(1 - U_{(i)}) - F^{-1}_{\chi_l}(1 - q(k+i)/m))_+^2 \\
&\sim \E \left( \sqrt{2\log\frac1{U_{(i)}}} - \sqrt{2\log\frac{m}{q(k+i)}} \right)_+^2\\
&\le \E \left( \sqrt{2\log\frac1{U_{(i)}}} - \sqrt{2\log\frac{m}{q(k+i)}} \right)^2\\
&\lesssim \E \left( \frac{\log^2(q(k+i)/m U_{(i)})}{\log(m/q(k+i))} \right).
\end{aligned}
\end{equation}
Now, we proceed to evaluate 
\[
\E \left[ \log^2\frac{q(k+i)}{m U_{(i)}} \right]= \log^2\frac{q(k+i)}{m} + \E \log^2 U_{(i)} - 2\log\frac{q(k+i)}{m} \E \log U_{(i)}.
\]
Observing that $U_{(i)}$ follows $\mathrm{Beta}(i, m+i-i)$, we get (see e.g. \cite{abramowitz1964})
\[
\begin{aligned}
&\E \log U_{(i)} = -\log\frac{m+1}{i} + \delta_1,\\
&\E \log^2 U_{(i)} = \left( \log\frac{m+1}{i} - \delta_1 \right)^2 + \frac1i - \frac1{m+1} + \delta_2
\end{aligned}
\]
for some $\delta_1 = O(1/i)$ and $\delta_2 = O(1/i^2)$. Thus we can evaluate $\E\log^2\frac{q(k+i)}{mU_{(i)}}$ as
\begin{multline}\nonumber%\label{eq:u_order_main}
\E\log^2\frac{q(k+i)}{mU_{(i)}} = \log^2\frac{q(k+i)}{m} - 2\log\frac{q(k+i)}{m}\E \log U_{(i)} + \E\log^2U_{(i)}\\
= \log^2\frac{q(k+i)}{m} + 2\log\frac{q(k+i)}{m}\left(\log\frac{m+1}{i} - \delta_1\right) + \left(\log\frac{m+1}{i} - \delta_1\right)^2 + \frac{1}{i} - \frac{1}{m+1} + \delta_2\\
 = \log^2\frac{q(k+i)(m+1)}{im} - 2\delta_1\log\frac{q(k+i)(m+1)}{im} + \frac1i  - \frac1{m+1} + \delta_1^2 + \delta_2.
\end{multline}
Hence, we get
\begin{align*}
&\sum_{i=1}^{\floor{Ak}}\E(\zeta_{(i)} - \lambda_{k+i})_+^2 \\
&\lesssim \frac1{\log\frac{m}{q(A+1)k}}\left(\underbrace{\sum_{i=1}^{\floor{Ak}}\log^2\frac{q(k+i)(m+1)}{im}}_{\mbox{I}} - \underbrace{\sum_{i=1}^{\floor{Ak}}2\delta_1\log\frac{q(k+i)(m+1)}{im}}_{\mbox{II}} + \underbrace{\sum_{i=1}^{\floor{Ak}}\big(\frac1{i} - \frac1{m+1} + \delta_1^2 + \delta_2\big)}_{\mbox{III}} \right)\\
&= \frac1{\log\frac{m}{q(A+1)k}} (\mbox{I} + |\mbox{II}| + |\mbox{III}|).
\end{align*}
Since $\frac{m}{q(A+1)k} \goto \infty$. The proof would be completed once we show $\mbox{I}, |\mbox{II}|$, and $|\mbox{III}|$ are bounded. To this end, first note that
\begin{equation}\nonumber
\begin{aligned}
\mbox{I} &= \sum_{i=1}^{\floor{Ak}} \log^2\frac{q(k+i)(m+1)}{im} \\&\le \sum_{i=1}^{\floor{Ak}}\max\left\{k\int^{i/k}_{(i-1)/k}\log^2\frac{q(m+1)(1+x)}{mx}\d x, k\int^{(i+1)/k}_{i/k}\log^2\frac{q(m+1)(1+x)}{mx}\d x \right\}\\
& \le 2k\int^{A+1}_0 \log^2\frac{q(m+1)(1+x)}{mx}\d x \asymp k = o\left(2k\log\frac{m}{k}\right).
\end{aligned}
\end{equation}
The second term $\mbox{II}$ obeys
\begin{equation}\nonumber
\begin{aligned}
|\mbox{II}| &\le \sum_{i=1}^{\floor{Ak}}2\Big|\delta_1 \log\frac{q(k+i)(m+1)}{im}\Big| \lesssim \sum_{i=1}^{\floor{Ak}}\frac1i \Big|\log\frac{q(k+i)(m+1)}{im}\Big| \\
&\le \sum_{i=1}^{\floor{Ak}}\max\left\{ k\int^{i/k}_{(i-1)/k}\Big|\log\frac{q(m+1)(1+x)}{mx}\Big|\d x, k\int^{(i+1)/k}_{i/k}\Big|\log\frac{q(m+1)(1+x)}{mx}\Big|\d x \right\}\\
&\le 2k\int^{A+1}_{0}\Big|\log\frac{q(m+1)(1+x)}{mx}\Big|\d x \asymp k = o\left(2k\log\frac{m}{k} \right),
\end{aligned}  
\end{equation}
where we use the fact that $\int^{A+1}_{0}\Big|\log\frac{q(m+1)(1+x)}{mx}\Big|\d x$ is bounded by some constant.
The last term is simply bounded as
\begin{equation}\nonumber
|\mbox{III}| \le \sum_{i=1}^{\floor{Ak}}\Big|\frac1i  - \frac1{m+1} + \delta_1^2 + \delta_2\Big| \lesssim \sum_{i=1}^{\floor{Ak}}\frac{1}{i} \lesssim \log (Ak) = o\left(2k\log\frac{m}{k}\right).
\end{equation}
Combining these established bounds on $\mbox{I}, \mbox{II}$, and $\mbox{III}$ finishes proof.

\end{proof}

%%%%%%%%%%%%%%%%%%%%%%%%%%%

\begin{lemma}\label{lm:off_supp2}
Under the preceding conditions, let $A$ be any constant satisfying $q(1+A)/A < 1$ and $\alpha$ be sufficiently small such that $l/\lambda_{k + \floor{\alpha m}} < 1/2$. Then, 
\[
\frac1{2k\log(m/k)} \sum_{i = \lceil Ak \rceil}^{\lfloor \alpha m \rfloor} \E(\zeta_{(i)} - \lambda_{k+i})_+^2 \rightarrow 0.
\]  
\end{lemma}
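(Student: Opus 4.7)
The plan is to show that for each $i$ in the range $[\lceil Ak\rceil, \lfloor \alpha m\rfloor]$, the quantity $\mathbb{E}(\zeta_{(i)} - \lambda_{k+i})_+^2$ is bounded by $Ce^{-c_1 i}/(i\lambda_{k+i})^2$ for positive constants $c_1$, $C$ depending only on $q$, $A$, $l$. Summing this estimate over $i$ and dividing by $2k\log(m/k)$ will yield a ratio of order $e^{-c_1Ak}/(k^3 \log(m/k))$, which tends to $0$ for every $k \geq 1$ because $k/m \to 0$ forces $\log(m/k) \to \infty$.

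First I will write $\mathbb{E}(\zeta_{(i)} - \lambda_{k+i})_+^2 = \int_0^\infty 2s\, \mathbb{P}(\zeta_{(i)} > \lambda_{k+i} + s)\, ds$ and express $\mathbb{P}(\zeta_{(i)} > \lambda_{k+i} + s) = \mathbb{P}(\mathrm{Bin}(m, p_s) \geq i)$ with $p_s := 1 - F_{\chi_l}(\lambda_{k+i} + s)$. The hazard-rate identity $p_s/p_0 = \exp(-\int_0^s h_{\chi_l}(\lambda_{k+i}+t)\, dt)$ together with the Mills-type inequality $h_{\chi_l}(y) \geq y - (l-1)/y \geq y/2$ for $y \geq 2l$ (which holds uniformly over the summation range because $\lambda_{k+i} \geq \lambda_{k+\lfloor\alpha m\rfloor} > 2l$ by the hypothesis on $\alpha$ and the monotonicity of $\lambda$) then gives $p_s \leq p_0\, e^{-\lambda_{k+i} s/2}$.

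Next I will apply the Chernoff inequality $\mathbb{P}(\mathrm{Bin}(m, b) \geq i) \leq \exp(-m D(i/m\,\|\,b))$ combined with the closed-form lower bound $D(a\,\|\,b) \geq a\log(a/b) + b - a$, which follows from the convexity in $b$ of $(1-a)\log((1-a)/(1-b))$ and a tangent-line estimate at $b = a$. Writing $\rho' := mp_0/i \leq \rho := q(1+A)/A < 1$ and $u := -\log(p_s/p_0) \geq \lambda_{k+i} s/2$, a short calculation yields
\begin{equation*}
m D(i/m\,\|\,p_s) \;\geq\; i\bigl[\, f(\rho') + u - \rho'(1-e^{-u})\,\bigr], \qquad f(x) := -\log x + x - 1.
\end{equation*}
The elementary inequality $u - \rho'(1-e^{-u}) \geq u(1-\rho')$, which is just a rearrangement of $1-e^{-u} \leq u$, then factors the right-hand side into $if(\rho) + \tfrac{1-\rho}{2}\,i\lambda_{k+i} s$, since $f$ is strictly decreasing on $(0,1)$ with $f(\rho) > 0$ for every $\rho \in (0,1)$. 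Layer-cake integration now produces $\mathbb{E}(\zeta_{(i)} - \lambda_{k+i})_+^2 \leq C\,e^{-f(\rho) i}/(i\lambda_{k+i})^2$.

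Finally, I will use the uniform lower bound $\lambda_{k+i}^2 \geq 4l^2$ on the summation range to absorb the $\lambda_{k+i}^2$ factor, reducing the sum to $C' \sum_{i\geq Ak} e^{-f(\rho) i}/i^2 = O(e^{-f(\rho)Ak}/k^2)$; dividing by $2k\log(m/k)$ gives the desired ratio $O(e^{-f(\rho)Ak}/(k^3 \log(m/k))) \to 0$. The main obstacle is producing a tail bound which simultaneously has a nontrivial $i$-exponent and an additional exponential decay in $s$, and which works for the full range $\rho \in (0,1)$ rather than only the restrictive subregime $\rho < 1/e$ that naive Markov-style bounds require. The identity $u - \rho'(1-e^{-u}) \geq u(1-\rho')$ is the key algebraic device that cleanly separates the two decay factors without constraining $\rho$, and verifying it together with the companion Mills-type hazard estimate constitutes the heart of the argument.
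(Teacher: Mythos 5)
Your proof is correct, and its backbone is the same as the paper's: everything hinges on the tail estimate $\mathbb{P}(\chi_l>\lambda_{k+i}+s)\le \mathrm{e}^{-\lambda_{k+i}s/2}\,\mathbb{P}(\chi_l>\lambda_{k+i})$, which is exactly the inequality $\alpha_u\le \mathrm{e}^{-\lambda_{k+i}u/2}\alpha_0$ that the paper proves. You reach it via the Mills-ratio/hazard-rate bound $h_{\chi_l}(y)\ge y-(l-1)/y\ge y/2$ for $y>2l$, whereas the paper gets it by a change of variables in the $\chi^2_l$ tail integral producing the factor $(1+u/\lambda_{k+i})^l\mathrm{e}^{-\lambda_{k+i}u}\le \mathrm{e}^{(l/\lambda_{k+i}-\lambda_{k+i})u}$; both routes use the hypothesis $l/\lambda_{k+\lfloor\alpha m\rfloor}<1/2$ in the same way. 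The main difference is that the paper then stops and invokes ``the same reasoning as Lemma A.4 in \cite{WE}'' for the order-statistics part, while you carry that part out explicitly: the layer-cake identity, the identification $\mathbb{P}(\zeta_{(i)}>\lambda_{k+i}+s)=\mathbb{P}(\mathrm{Bin}(m,p_s)\ge i)$, the Chernoff bound with the tangent-line lower bound on the Kullback--Leibler divergence, and the clean separation $u-\rho'(1-\mathrm{e}^{-u})\ge u(1-\rho')$. Your observation that the KL form of the Chernoff bound is what lets the argument cover the whole hypothesis range $q(1+A)/A<1$ (where a crude $(em p_0/i)^i$ bound would force $q(1+A)/A<1/e$) is a genuine and worthwhile point; note only that $\rho'=q(k+i)/i\le q(1+1/A)$ uses $i\ge Ak$, so the constant $f(\rho)>0$ is uniform over the summation range, as your argument requires. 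The final summation and the remark that the ratio vanishes even for bounded $k$ because $\log(m/k)\to\infty$ are both correct.
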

\begin{proof}[Proof of Lemma~\ref{lm:off_supp2}]
Note that $\lambda_{k + \floor{\alpha m}} \sim \sqrt{2\log\frac{m}{q(k + \floor{\alpha m})}} \sim \sqrt{2\log\frac1{q\alpha}}$. So it is clear that such $\alpha$ exists. Pick any fixed $i$ between $\lceil Ak \rceil$ and$\lfloor \alpha m \rfloor$. As in the proof of Lemma A.4 in \cite{WE}, denote by $\alpha_u = \P(\chi_l > \lambda_{k+i} + u)$. Note that
\begin{equation}\nonumber
\begin{aligned}
\alpha_u &= \P(\chi_l > \lambda_{k+i} + u) = \int^{\infty}_{(\lambda_{k+i} + u)^2} \frac1{\mathrm{e}^{l/2}\Gamma(l/2)} x^{l/2-1} \mathrm{e}^{-x/2} \mathrm{d} x\\
&= \int^{\infty}_{\lambda_{k+i}^2} \frac1{\mathrm{e}^{l/2}\Gamma(l/2)} \left(\frac{(\lambda_{k+i}+u)^2}{\lambda_{k+i}^2} y \right)^{l/2-1} \exp\left(- \frac{(\lambda_{k+i}+u)^2}{2\lambda^2_{k+i}}y \right) \mathrm{d} \frac{(\lambda_{k+i}+u)^2}{\lambda_{k+i}^2} y\\
&= \left(1 + \frac{u}{\lambda_{k+i}}\right)^l \int^{\infty}_{\lambda_{k+i}^2} \frac1{\mathrm{e}^{l/2}\Gamma(l/2)} y^{l/2-1} \exp\left(- \frac{(\lambda_{k+i}+u)^2}{2\lambda^2_{k+i}}y \right) \mathrm{d} y\\
&\le \left(1 + \frac{u}{\lambda_{k+i}}\right)^l \mathrm{e}^{-\lambda_{k+i} u} \int^{\infty}_{\lambda_{k+i}^2} \frac1{e^{l/2}\Gamma(l/2)} y^{l/2-1} \mathrm{e}^{-y/2}\mathrm{d} y\\
&= \left(1 + \frac{u}{\lambda_{k+i}}\right)^l \mathrm{e}^{-\lambda_{k+i} u} \alpha_0\\
&\le \exp \left( \frac{l}{\lambda_{k+i}} u-\lambda_{k+i}u \right) \alpha_0.
\end{aligned}
\end{equation}
With the proviso that $l/\lambda_{k+\lfloor \alpha m \rfloor} < 1/2 < \lambda_{k+\lfloor \alpha m \rfloor}/2$, it follows that
\[
\alpha_u \le \mathrm{e}^{-\lambda_{k+i}u/2 }\alpha_0.
\]
The remaining proof follows from exactly the same reasoning as that of Lemma A.4 in \cite{WE}.

\end{proof}

%%%%%%%%%%%%%%%%%%%%%%%%%%%%%%%%%%%%%%%%%%%%%%%%%%

\begin{lemma}\label{lm:off_supp3}
Under the preceding conditions, for any constant $\alpha > 0$ we have
\[
\frac1{2k\log(m/k)} \sum_{i = \lceil \alpha m \rceil}^{m - k} \E(\zeta_{(i)} - \lambda_{k+i})_+^2 \rightarrow 0.
\]
\end{lemma}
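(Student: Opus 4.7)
The key observation is that, unlike in Lemmas~\ref{lm:off_supp1} and~\ref{lm:off_supp2}, the thresholds $\lambda_{k+i}$ no longer grow with $m$ in this regime: since $q(k+i)/m\in[q\alpha+o(1),\,q]$ and $F_{\chi_l}^{-1}$ is increasing, $\lambda_{k+i}$ stays between the two positive constants $F_{\chi_l}^{-1}(1-q)$ and $F_{\chi_l}^{-1}(1-q\alpha/2)$ once $m$ is large. Consequently $\{\zeta_{(i)}>\lambda_{k+i}\}$ is a genuine large-deviation event for $m$ i.i.d.\ $\chi_l$'s, and my plan is to bound its probability exponentially in $m$ and observe that this easily dominates the factor $2k\log(m/k)\to\infty$ in the denominator.

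The first step is to rewrite the event via the quantile representation $\zeta_{(i)}=F_{\chi_l}^{-1}(1-U_{(i)})$ already invoked above, so that
\[
\{\zeta_{(i)}>\lambda_{k+i}\}\;=\;\{U_{(i)}<q(k+i)/m\}\;=\;\{\mathrm{Bin}(m,q(k+i)/m)\ge i\}.
\]
The binomial on the right has mean $q(k+i)$, and the excess over the mean is $(1-q)i-qk$, which is at least $(1-q)\alpha m/2$ once $m$ is large enough that $qk\le(1-q)\alpha m/2$. Hoeffding's inequality then yields, uniformly for $i\in[\lceil\alpha m\rceil,m-k]$,
\[
\P(\zeta_{(i)}>\lambda_{k+i})\;\le\;\exp\!\Big(-\tfrac{(1-q)^2\alpha^2}{2}\,m\Big)\;=:\;e^{-cm}.
\]

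Next I would bound each summand by Cauchy--Schwarz against the maximum:
\[
\E(\zeta_{(i)}-\lambda_{k+i})_+^2\;\le\;\E\!\big[\zeta_{(1)}^2\mathbf{1}\{\zeta_{(i)}>\lambda_{k+i}\}\big]\;\le\;\sqrt{\E\zeta_{(1)}^4}\cdot\sqrt{\P(\zeta_{(i)}>\lambda_{k+i})}.
\]
Since $\zeta_{(1)}$ is the maximum of $m$ i.i.d.\ $\chi_l$'s with fixed $l$, a standard union-bound argument combined with the Gaussian-type tail of $\chi_l$ yields $\E\zeta_{(1)}^4=O(\log^2 m)$. Summing the at most $m$ terms,
\[
\sum_{i=\lceil\alpha m\rceil}^{m-k}\E(\zeta_{(i)}-\lambda_{k+i})_+^2\;\le\;m\cdot O(\log m)\cdot e^{-cm/2}\;\goto\;0
\]
exponentially fast, which is $o(2k\log(m/k))$ since the denominator tends to infinity (it exceeds $2\log m$ for every $k\ge 1$).

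The only real obstacle, though modest, is securing a Hoeffding exponent $c>0$ that is truly uniform across the whole range $i\in[\lceil\alpha m\rceil,m-k]$; this is exactly where the lower bound $i\ge\alpha m$ feeds in, guaranteeing a normalised excess of at least $(1-q)\alpha/2$. The polylogarithmic bound on $\E\zeta_{(1)}^4$ is routine for fixed $l$, and the crude replacement $\zeta_{(i)}\le\zeta_{(1)}$ throws away a great deal but is more than compensated by the exponential probability bound.
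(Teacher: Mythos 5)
Your argument is correct, and it takes a genuinely different (and in fact more elementary) route than the paper. The paper splits each summand according to whether $U_{(i)} \ge \alpha/3$ or $U_{(i)} < \alpha/3$: on the first event it uses the local behaviour of the quantile function to reduce $(\zeta_{(i)}-\lambda_{k+i})_+$ to the indicator $\bm{1}_{U_{(i)} \le q(k+i)/m}$ and controls the resulting binomial tail by a Chernoff/Kullback--Leibler bound (giving per-index decay $\mathrm{e}^{-ci}$), while on the second event it carries out an explicit Beta-integral computation to show that contribution is of order $\mathrm{e}^{-c\alpha m}$. You instead observe the exact identity $\{\zeta_{(i)}>\lambda_{k+i}\}=\{U_{(i)}<q(k+i)/m\}=\{\mathrm{Bin}(m,q(k+i)/m)\ge i\}$, note that for $i\ge \alpha m$ and $k=o(m)$ the required excess over the mean $q(k+i)$ is at least $(1-q)\alpha m/2$, and get a single Hoeffding bound $\mathrm{e}^{-cm}$ uniform over the whole range; the crude majorization $(\zeta_{(i)}-\lambda_{k+i})_+^2\le \zeta_{(1)}^2\bm{1}\{\zeta_{(i)}>\lambda_{k+i}\}$ plus Cauchy--Schwarz then finishes, since $\E\zeta_{(1)}^4$ grows only polylogarithmically (indeed even the trivial bound $\E\zeta_{(1)}^4\le m\,\E\chi_l^4=O(m)$ would suffice against the exponential). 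What your approach buys is brevity and robustness: it avoids both the quantile-difference asymptotics and the Beta-function asymptotics, needs only monotonicity of $F_{\chi_l}^{-1}$, and works for every fixed $\alpha>0$ without the paper's preliminary reduction to small $\alpha$. What it gives up is the sharper per-index information $\P(U_{(i)}\le q(k+i)/m)\le \mathrm{e}^{-(1+o(1))(\log\frac1q-1+q)i}$, which is immaterial for the lemma as stated. Both arguments prove strictly more than required, namely that the numerator itself tends to zero.
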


\begin{proof}[Proof of Lemma~\ref{lm:off_supp3}]
Recognizing that the value of the summation increases as $\alpha$ decreases, we only prove the lemma for sufficiently small $\alpha$. In the case of $U_{(i)} \ge \alpha/3$, we get
\begin{equation}\nonumber
\begin{aligned}
(\zeta_{(i)} - \lambda_{k+i})_+ &= \left( F^{-1}_{\chi_l}(1 - U_{(i)}) - F^{-1}_{\chi_l}(1 - q(k+i)/m) \right)_+\\
&\asymp \left(1 - U_{(i)} - (1 - q(k+i)/m)\right)_+\\
&= (q(k+i)/m - U_{(i)})_+,
\end{aligned}
\end{equation}
since both $U_{(i)}$ and $q(k+i)/m$ are bounded below away from zero. Otherwise, we use the trivial inequality $(\zeta_{(i)} - \lambda_{k+i})_+ \le \zeta_{(i)}$. In either case, we get
\[
\begin{aligned}
(\zeta_{(i)} - \lambda_{k+i})_+^2  &\lesssim \zeta_{(i)}^2 \bm{1}_{U_{(i)} < \frac{\alpha}{3}} + \left(\frac{q(k+i)}{m} - U_{(i)} \right)_+^2\\
&= \Big(F^{-1}_{\chi_l}(1 - U_{(i)}) \Big)^2 \bm{1}_{U_{(i)} < \frac{\alpha}{3}} + \left(\frac{q(k+i)}{m} - U_{(i)} \right)_+^2\\
&\asymp 2\log \left( \frac1{ U_{(i)}} \right) \bm{1}_{U_{(i)} < \frac{\alpha}{3}} + \left(\frac{q(k+i)}{m} - U_{(i)} \right)_+^2\\
&\lesssim \log \left( \frac1{ U_{(i)}} \right) \bm{1}_{U_{(i)} < \frac{\alpha}{3}} + \bm{1}_{U_{(i)} \le \frac{q(k+i)}{m}}.\\
\end{aligned}
\]
Hence,
\[
\sum_{i = \lceil \alpha m \rceil}^{m - k} \E(\zeta_{(i)} - \lambda_{k+i})_+^2 \lesssim \sum_{i = \lceil \alpha m \rceil}^{m - k} \E\left(\log \left( \frac1{ U_{(i)}} \right); U_{(i)} < \frac{\alpha}{3} \right) + \sum_{i = \lceil \alpha m \rceil}^{m - k} \P\left(U_{(i)} \le \frac{q(k+i)}{m}\right)
\]
In the remaining proof we aim to show
\begin{equation}\label{eq:third_alpha_sum}
\sum_{i = \lceil \alpha m \rceil}^{m - k} \E\left(\log \left( \frac1{ U_{(i)}} \right); U_{(i)} < \frac{\alpha}{3} \right) \goto 0
\end{equation}
and
\begin{equation}\label{eq:count_alpha}  
\sum_{i = \lceil \alpha m \rceil}^{m - k} \P\left(U_{(i)} \le \frac{q(k+i)}{m}\right) \goto 0.
\end{equation}
This is more than we need since $2k\log(m/k) \goto \infty$.

Each summand of \eqref{eq:third_alpha_sum} is bounded above by
\begin{equation}\nonumber
\begin{aligned}
\E\left(\log \left( \frac1{ U_{(\lceil \alpha m \rceil)}} \right); U_{(\lceil \alpha m \rceil)} < \frac{\alpha}{3} \right) &= \displaystyle\int^{\frac{\alpha}{3}}_0 \frac{x^{\lceil \alpha m \rceil-1}(1-x)^{m-\lceil \alpha m \rceil} \log \frac1x}{\operatorname{B}(\lceil \alpha m \rceil, m+1-\lceil \alpha m \rceil)} \d x\\
&\le \int^{\frac{\alpha}{3}}_0 \frac{x^{\lceil \alpha m \rceil-1}\log \frac1x}{\operatorname{B}(\lceil \alpha m \rceil, m+1-\lceil \alpha m \rceil)} \d x  \\
&= \frac1{\lceil \alpha m \rceil^2 \operatorname{B}(\lceil \alpha m \rceil, m+1-\lceil \alpha m \rceil)}\int^{(\frac{\alpha}{3})^{\lceil \alpha m \rceil}}_0 \log\frac1{y} ~ \d y\\
&\sim \frac{(\alpha/3)^{\lceil \alpha m \rceil} \log\frac3{\alpha}}{\lceil \alpha m \rceil \operatorname{B}(\lceil \alpha m \rceil, m+1-\lceil \alpha m \rceil)}.
\end{aligned}
\end{equation}
The last line obeys
\[
\begin{aligned}
\log\left[ \frac{(\alpha/3)^{\lceil \alpha m \rceil} }{\operatorname{B}(\lceil \alpha m \rceil, m+1-\lceil \alpha m \rceil)} \right] &\sim -\alpha m \log\frac3{\alpha} + \alpha m \log\frac1{\alpha}  + (1-\alpha)m \log \frac1{1-\alpha}\\
& = -\alpha m \log 3  + (1-\alpha)m \log \frac1{1-\alpha}.\\
\end{aligned}
\]
For small $\alpha$, we get $-\alpha \log 3  + (1-\alpha) \log \frac1{1-\alpha} = -\alpha\log 3 + (1+o(1))(1-\alpha)\alpha = -(\log 3  -1 + o(1))\alpha$. (Note that $\log 3 -1 = 0.0986\ldots > 0$.) This immediately yields
\[
\E\left(\log \left( \frac1{ U_{(\lceil \alpha m \rceil)}} \right); U_{(\lceil \alpha m \rceil)} < \frac{\alpha}{3} \right) \sim \mathrm{e}^{-(\log 3  -1 + o(1))\alpha m},
\]
which implies \eqref{eq:third_alpha_sum} since $m\mathrm{e}^{-(\log 3  -1 + o(1))\alpha m} \goto 0$.

Next, we turn to show \eqref{eq:count_alpha}. Note that $\P\left(U_{(i)} \le \frac{q(k+i)}{m}\right)$ actually is the tail probability of the binomial distribution with $m$ trials and success probability $\frac{q(k+i)}{m}$. Hence, by the Chernoff bound, this
probability is bounded as
\[
\P\left(U_{(i)} \le \frac{q(k+i)}{m}\right) \le \exp\left( -m \operatorname{KL}(i/m || q(k+i)/m) \right),
\]
where $\operatorname{KL}(a||b) := a \log\frac{a}{b} + (1-a)\log\frac{1-a}{1-b}$ is the Kullback-Leibler divergence. Thanks to $i \ge \lceil \alpha m \rceil \gg k$, simple analysis reveals that
\[
\operatorname{KL}(i/m || q(k+i)/m) \ge (1+o(1))i \left( \log\frac1q - 1 + q \right) / m.
\]
Combining the last two displays gives
\[
\P\left(U_{(i)} \le \frac{q(k+i)}{m}\right) \le \mathrm{e}^{-(1+o(1)) \left( \log\frac1q - 1 + q \right)i}.
\]
Plugging the above inequality into \eqref{eq:count_alpha} yields
\[
\sum_{i = \lceil \alpha m \rceil}^{m - k} \P\left(U_{(i)} \le \frac{q(k+i)}{m}\right) \le  \sum_{i = \lceil \alpha m \rceil}^{m - k} \mathrm{e}^{-(1+o(1)) \left( \log\frac1q - 1 + q \right)i}   \goto 0,
\]
where the last step follows from $\log\frac1q - 1 + q  > 0$ and $\lceil \alpha m \rceil \goto \infty$.
\end{proof}

%%%%%%%%%%%%%%%%%%%%%%%%%%%%%%%%%%%%%%%%%%%%%%%%%%%%%%%%%%%%%%%%%%%%%%%%%%%%%%%%%%%%%%%%%%%%%%%%%%%%%%%%%%%%%%%%%%%%%%%%%%%
\section{Strength of signals}
Consider the case when all submatrices $X_{I_i}$ have the same rank, $l>0$, $w>0$ is used as the universal weight and $X$ is orthogonal at groups level. From the interpretation of gSLOPE estimate coming from \eqref{17022353}, we see that the identification of the relevant groups could be summarized as follows: $\lambda$ decides on the number, $R$, of groups labeled as relevant, which correspond to indices of the $R$ largest values among $w^{-1}\|\y_{\II_1}\|_2,\ldots,w^{-1}\|\y_{\II_m}\|_2$. The random variables $w^{-1} \|\y_{\II_i}\|_2$ have a (possibly) non-central $\Chi$ distributions with $l$ degrees of freedom and noncentrality parameters given by the entries of $\iI{\Beta}$. Now, the nonzero $\|\Beta_{\II_i}\|_2$ could be perceived as a strong signal, if with the high probability the random variable having the noncentral $\Chi$ distribution with the noncentrality parameter $\|\Beta_{\II_i}\|_2$ is large comparing to the background composed of the independent random variables with the $\Chi_l$ distributions (then signal is likely to be identified by gSLOPE; otherwise, the signal could be easily covered by random disturbances and its identification has more in common with good luck than with the usage of particular method). The important quantity, which could be treated as a breaking point, is the expected value of the maximum of the background noise. Group effects being close to this value, could be perceived as medium under the orthogonal case and weak under the occurrence of correlations between groups. The above reasoning applied to the considered case, yields the issue of approximation of the expected value of the maximum of $m$ independent $\Chi_l$-distributed variables. Suppose that $\Psi_i\sim \Chi_l$ for $i=\{1,\ldots,m\}$. From Jensen's inequality we have 
$$\mathbb{E}\left(\max_{i=1,\ldots,m}\{\Psi_i\}\right) = \mathbb{E}\left(\sqrt{\max_{i=1,\ldots,m}\{\Psi_i^2\}}\right)\leq\sqrt{\mathbb{E}\left(\max_{i=1,\ldots,m}\{\Psi^2_i\}\right)},$$ 
hence we will replace the last problem by the problem of finding the reasonable upper bound on the expected value of the maximum of $m$ independent, $\Chi_l^2$-distributed variables.
\begin{theorem}
Let $\Psi_1,\ldots,\Psi_m$ be independent variables, $\Psi_i\sim\Chi_l^2$ for all $i$. Then
\begin{equation}
\label{05031231}
\mathbb{E}\left(\max_{i=1,\ldots,m}\{\Psi_i\}\right)\leq \frac{4\ln(m)}{1-m^{-\frac2l}}.
\end{equation}
\end{theorem}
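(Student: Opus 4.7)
The plan is to apply the classical Chernoff/Jensen bound for the maximum of nonnegative random variables and then make a judicious choice of the parameter that exactly produces the stated right-hand side.

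First, I would use the fact that $\exp(\cdot)$ is convex and strictly increasing to obtain, for any $t \in (0, 1/2)$,
\[
\mathbb{E}\!\left(\max_{i=1,\ldots,m}\Psi_i\right) \;\leq\; \frac{1}{t}\ln \mathbb{E}\!\left(e^{t\,\max_i \Psi_i}\right) \;=\; \frac{1}{t}\ln \mathbb{E}\!\left(\max_i e^{t\Psi_i}\right) \;\leq\; \frac{1}{t}\ln\Bigl(\sum_{i=1}^m \mathbb{E}\,e^{t\Psi_i}\Bigr),
\]
where the first inequality is Jensen applied to $\exp$, and the last inequality replaces the maximum of nonnegative quantities by the sum. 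Then I would invoke the moment generating function of the chi-square distribution, $\mathbb{E}\,e^{t\Psi_i} = (1-2t)^{-l/2}$, valid for $t<1/2$, to conclude
\[
\mathbb{E}\!\left(\max_i \Psi_i\right) \;\leq\; \frac{\ln m}{t} \;-\; \frac{l}{2t}\ln(1-2t).
\]

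The remaining task is an optimization. Writing $s := 2t \in (0,1)$, the bound becomes $\frac{2\ln m}{s} - \frac{l}{s}\ln(1-s)$. The crucial observation is that the specific choice $s = 1 - m^{-2/l}$ gives $\ln(1-s) = -\tfrac{2\ln m}{l}$, which cancels the dependence on $l$ beautifully: the two summands become equal and both equal to $\dfrac{2\ln m}{1-m^{-2/l}}$. Adding them yields exactly
\[
\mathbb{E}\!\left(\max_i \Psi_i\right) \;\leq\; \frac{4\ln m}{1 - m^{-2/l}},
\]
which is the claimed inequality. Before plugging in, I would briefly check that $s = 1 - m^{-2/l} \in (0,1)$, equivalently $t \in (0,1/2)$, which holds whenever $m \geq 2$; the degenerate case $m=1$ makes the right-hand side infinite and the inequality trivial.

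There is essentially no obstacle here — the only non-routine ingredient is spotting the magic choice of $t$, and that is forced by the form of the desired bound (one matches the $\ln m$ term coming from $\ln m/t$ against the $\ln(1-2t)/t$ term and solves). Everything else is the standard Chernoff union bound plus the closed-form chi-square MGF.
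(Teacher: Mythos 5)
Your proposal is correct and follows essentially the same route as the paper's own proof: Jensen's inequality applied to $e^{tM_m}$, the union bound $\mathbb{E}\max_i e^{t\Psi_i}\leq\sum_i\mathbb{E}e^{t\Psi_i}$, the closed-form $\chi^2_l$ moment generating function, and the substitution $t=(1-m^{-2/l})/2$, which is exactly the paper's choice $t_{m,l}$. The only cosmetic difference is that you motivate the choice of $t$ by balancing the two terms, whereas the paper simply states it; the resulting computation is identical.
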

\begin{proof}
Denote $M_m:=\max_{i=1,\ldots,m}\{\Psi_i\}$. From the Jensen's inequality applied to $e^{tM_m}$ we have
\begin{equation}
e^{t\mathbb{E}[M_m]}\leq \mathbb{E}\left[e^{tM_m}\right]=\mathbb{E}\left[\max_{i=1,\ldots,m}e^{t\Psi_i}\right]\leq\sum_{i=1}^m\mathbb{E}\left[e^{t\Psi_i}\right].
\end{equation}
We will consider only $t\in[0,\frac12)$. Since the moment generating function for $\Chi^2_l$ distribution is given by $MGF:=(1-2t)^{-\frac l2}$, for each $i$ it holds $\mathbb{E}\left[e^{t\Psi_i}\right]=(1-2t)^{-\frac l2}$ and we get $e^{t\mathbb{E}[M_m]}\leq m (1-2t)^{-\frac l2}$. Applying the natural logarithm to both sides yields
\begin{equation}
\label{05031230}
\mathbb{E}[M_m]\leq\frac{\ln(m)+\ln\left((1-2t)^{-\frac l2}\right)}{t},\ \ t\in\left[0, 1/2\right).
\end{equation}
Define $t_{m,l}: = \frac{1-m^{-\frac 2l}}2$. Then for all positive, natural numbers $l$ and $m$ we have $t_{m,l}\in[0,\frac12)$. Plugging $t_{m,l}$ to the right side of (\ref{05031230}) gives inequality (\ref{05031231}) and finishes the proof.
\end{proof}
\noindent The above theorem gives us the motivation to use the quantity $\sqrt{4\ln(m)/(1-m^{-2/l})}$ as the upper bound on the expected value of maximum over $m$ independent $\Chi_l$-distributed variables. In all simulations, which we have performed to investigate the performance of gSLOPE, we have generated the effects for truly relevant groups basing on these upper bounds. In particular, in experiments where $l_i$'s as well as weights were identical, we aimed at $\mathbb{E}\left(\|\y_{\II_i}\|_2\right)=\sqrt{4\ln(m)/(1-m^{-2/l})}$, for the truly relevant group $i$. Since $\mathbb{E}\left(\|\y_{\II_i}\|_2\right)\approx \sqrt{\|\Beta_{\II_i}\|_2^2+l}$, this yields the setting
\begin{equation}
\label{05031641}
\|\Beta_{\II_i}\|_2 = B(m,l), \qquad\textrm{for}\qquad B(m,l):=\sqrt{4\ln(m)(1-m^{-2/l})-l}
\end{equation}
for groups chosen to be truly relevant.

%%%%%%%%%%%%%%%%%%%%%%%%%%%%%%%%%%%%%%%%%%%%%%%%%%%%%%%%%%%%%%%%%%%%%%%%%%%%%%%%%%%%%%%%%%%%%%%%%%%%%%%%%%%%%%%%%%%%%%%%%%%%%%%%%%%%%%%%%%
\section{Dual norm and conjugate of grouped sorted $\ell_1$ norm}
\label{subsec:app26112110}
Let $f:\mathbb{R}^p\rightarrow \mathbb{R}$ be a norm. We will use notation $f^D$ to refer to the dual norm to $f$, i.e function defined as $f^D(x):=\underset{b}\max\big\{x^\mathsf{T}b: f(b)\leq1\big\}$. It could be shown (see \cite{SLOPE2}), that the set $C_{\lambda}$, defined as
$C_{\lambda}:=\Big\{x\in\mathbb{R}^p:\ \sum_{i=1}^k|x|_{(i)}\leq\sum_{i=1}^k\lambda_i,\ k=1,\ldots,p\Big\}$, is unit ball of the dual norm to $J_{\lambda}$ for any nonnegative, nonincreasing sequence $\{\lambda_i\}_{i=1}^p$ with at least one nonzero element. We will now consider the dual norm to $J_{\lambda,I,W}(b)=J_{\lambda}\big(W\I{b}\big)$. It holds
\begin{equation}
\begin{split}
J_{\lambda,I,W}^D(x)=\ &\underset{b}\max\big\{x^\mathsf{T}b:\ J_{\lambda,I,W}(b)\leq 1\big\}=\underset{b}\max\big\{x^\mathsf{T}b:\ J_{\lambda}(W\I{b})\leq1\big\}=\\
&\underset{b,c}\max\big\{x^\mathsf{T}b:\ J_{\lambda}(c)\leq1,\ c=W\I{b}\big\}=\underset{c}\max\big\{x^\mathsf{T}b^c:\ J_{\lambda}(c)\leq1,\ c\succeq0\big\},
\end{split}
\end{equation}
where $b^c$ is defined as $b^c:=\argmax{b}\left\{x^\mathsf{T}b:\ c=W\I{b}\right\}.$ This problem is separable and for each $i$ we have $b^c_{I_i}=\argmax{}\big\{x_{I_i}^\mathsf{T}b_{I_i}:\ c^2_i=w_i^2\|b_{I_i}\|^2_2\big\}$. Applying the Lagrange multiplier method quickly yields $x_{I_i}^\mathsf{T}b_{I_i}^c=c_iw_i^{-1}\|x_{I_i}\|_2$. Consequently,
\begin{equation}
\begin{split}
J_{\lambda,I,W}^D(x) =&\ \underset{c}\max\big\{(W^{-1}\I{x})^\mathsf{T}c:\ J_{\lambda}(c)\leq1,\ c\succeq0\big\} =\\ &\ \underset{c}\max\big\{(W^{-1}\I{x})^\mathsf{T}c:\ J_{\lambda}(c)\leq1\big\} =
\ J_{\lambda}^D\big(W^{-1}\I{x}\big).
\end{split}
\end{equation}
Therefore, $\big\{x:\ J_{\lambda,I, W}^D(x)\leq 1\big\}=\big\{x:\ J_{\lambda}^D(W^{-1}\I{x})\leq 1\big\}=\big\{x:\ W^{-1}\I{x}\in C_{\lambda}\big\}.$ Since the conjugate of norm is equal to zero for arguments from unit ball of dual norm, and equal to infinity otherwise, we immediately get
\begin{corollary}
\label{07071055}
The conjugate function for $J_{\lambda,I,W}$ is the function $J^*_{\lambda,I,W}$ defined as
\begin{equation}
J^*_{\lambda,I,W}(x)=\left\{\begin{array}{cl}
0,&W^{-1}\I{x}\in C_{\lambda}\\
\infty,&\textrm{otherwise}
\end{array}
\right..
\end{equation}
\end{corollary}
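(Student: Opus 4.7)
The plan is simply to combine two ingredients that are already in place by the time the corollary is stated. The first is the standard convex-analytic fact: for any norm $f$ on $\mathbb{R}^p$ with dual norm $f^D$, the Fenchel conjugate satisfies $f^*(x) = 0$ when $f^D(x) \leq 1$ and $f^*(x) = +\infty$ otherwise; in other words $f^*$ is the convex indicator of the unit ball of $f^D$. The second ingredient is the explicit identification of this unit ball for our particular norm, which is carried out in the paragraph immediately preceding the corollary: $\{x : J^D_{\lambda,I,W}(x) \leq 1\} = \{x : W^{-1}\I{x} \in C_\lambda\}$.

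First I would briefly justify the general fact, which is the only non-bookkeeping step. If $f^D(x) \leq 1$, the generalized Cauchy--Schwarz inequality $x^\mathsf{T} b \leq f^D(x)\,f(b) \leq f(b)$ yields $x^\mathsf{T} b - f(b) \leq 0$ for every $b \in \mathbb{R}^p$, with equality attained at $b = 0$, so the supremum in the definition of $f^*$ equals $0$. If instead $f^D(x) > 1$, by definition of the dual norm one can pick $b_0$ with $f(b_0) \leq 1$ and $x^\mathsf{T} b_0 > 1$; then for any $\alpha > 0$ we have $x^\mathsf{T}(\alpha b_0) - f(\alpha b_0) \geq \alpha (x^\mathsf{T} b_0 - 1)$, which tends to $+\infty$ as $\alpha \to +\infty$, so $f^*(x) = +\infty$.

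Next I would check that the hypotheses for applying this general fact to $f = J_{\lambda,I,W}$ are met. Since $I$ is a partition of $\{1,\ldots,p\}$, $W$ is diagonal with strictly positive entries and $\lambda$ is nonincreasing with at least one nonzero component, the earlier proposition in the paper shows that $J_{\lambda,I,W}$ is a genuine norm on $\mathbb{R}^p$ (the proof there, once restated with the partition $I$ in place of $\II$, carries over verbatim). Substituting the unit-ball description $\{x : W^{-1}\I{x} \in C_\lambda\}$ derived just above the corollary into the general fact immediately reproduces the claimed piecewise formula for $J^*_{\lambda,I,W}$.

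There is no genuine obstacle here: the corollary is essentially a packaging step that converts the dual-norm identity into the conjugate formula that is used later (in particular within the proof of Theorem~\ref{27111155}, where $J^*_{\lambda^c,\widetilde{I}}(g^c)$ is read off directly from this description). All of the real work sits in the dual-norm calculation preceding the statement, and that calculation is already complete.
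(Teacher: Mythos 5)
Your proof is correct and takes essentially the same route as the paper: the paper also obtains the corollary by combining the identification $\{x:\ J^D_{\lambda,I,W}(x)\leq 1\}=\{x:\ W^{-1}\I{x}\in C_{\lambda}\}$ from the preceding dual-norm computation with the standard fact that the conjugate of a norm is the indicator of the unit ball of its dual norm. The only difference is that the paper asserts this standard fact without proof, whereas you supply the short verification (and the check that $J_{\lambda,I,W}$ is a genuine norm), which is harmless extra detail.
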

%%%%%%%%%%%%%%%%%%%%%%%%%%%%%%%%%%%%%%%%%%%%%%%%%%%%%%%%%%%%%%%%%%%%%%%%%%%%%%%%%%%%%%%%%%%%%%%%%%%%%%%%%%%%%%%%%%%%%%%%%%%%%%%%%%%%%%%%%%

\section{Proof of Theorem \ref{27111155}}
We will start with introducing an alternative definition of subgradient for convex function. Suppose that $f$ is convex function and for some $b, g\in \mathbb{R}^p$ it occurs $f(b+h)\geq f(b)+g^\mathsf{T}h$ for $h\in H$, where $H$ is open set containing zero. Let $h_0\in \mathbb{R}^p$ be arbitrary vector. Function $F:\mathbb{R}\rightarrow\mathbb{R}$, defined as ${F(t): = f(b+th_0)-tg^\mathsf{T}h_0}$, is convex. There exists $t_0\in(0,1)$ such that $t_0h_0\in H$, what gives
\begin{equation}
f(b)\leq F(t_0)=F\big((1-t_0)\cdot 0+t_0\cdot 1\big)\leq (1-t_0)f(b)+t_0F(1)
\end{equation}
and $f(b+h_0)\geq f(b)+g^\mathsf{T}h_0$ as a result. Above reasoning leads to
\begin{corollary}
\label{06182058}
For any open set $H$ containing zero the subgradient of convex function $f$ at $b$ could be equivalently defined as a vector $g$ satisfying $f(b+h)\geq f(b)+g^\mathsf{T}h,\ \textrm{for all}\ h\in H.$
\end{corollary}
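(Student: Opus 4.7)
The plan is to show the two directions of the equivalence. The forward direction (a genuine subgradient restricted to $H$ still obeys the inequality) is immediate from the usual definition. So the substantive step is the reverse direction: assuming $f(b+h)\geq f(b)+g^\mathsf{T}h$ only for $h$ in some open neighborhood $H$ of the origin, one must deduce that the same inequality holds for every $h_0\in\mathbb{R}^p$. My plan is to use convexity to rescale an arbitrary direction back into $H$ and then push the local inequality out to global scale.

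First, I would fix an arbitrary $h_0\in\mathbb{R}^p$ and reduce everything to a one-dimensional problem by introducing $F:\mathbb{R}\to\mathbb{R}$, $F(t):=f(b+th_0)-tg^\mathsf{T}h_0$. Since $t\mapsto b+th_0$ is affine and $f$ is convex, $f(b+th_0)$ is convex in $t$; adding the linear term $-tg^\mathsf{T}h_0$ preserves convexity, so $F$ is convex. The target inequality $f(b+h_0)\geq f(b)+g^\mathsf{T}h_0$ is precisely $F(1)\geq F(0)$.

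Next, since $H$ is open and contains $0$, I can pick some $t_0\in(0,1)$ with $t_0h_0\in H$. The hypothesis applied at $h=t_0h_0$ gives $f(b+t_0h_0)\geq f(b)+t_0g^\mathsf{T}h_0$, i.e. $F(t_0)\geq F(0)$. On the other hand, writing $t_0=(1-t_0)\cdot 0+t_0\cdot 1$ and using convexity of $F$, one obtains $F(t_0)\leq(1-t_0)F(0)+t_0F(1)$. Chaining these two bounds yields $F(0)\leq(1-t_0)F(0)+t_0F(1)$, so $t_0F(0)\leq t_0F(1)$, and dividing by $t_0>0$ gives $F(0)\leq F(1)$, which is the desired global subgradient inequality at $h_0$. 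Since $h_0$ was arbitrary, $g\in\partial f(b)$ in the standard sense.

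I do not anticipate a serious obstacle here: the proof is just a one-variable convexity argument together with the observation that any direction scales into $H$. The only minor subtlety is making sure $t_0$ lies strictly between $0$ and $1$ (so that $(1-t_0)$ and $t_0$ are both positive and the convex-combination inequality can be divided through); this is automatic because $H$ is open, contains $0$, and $h_0$ is fixed, so sufficiently small positive $t_0$ works.
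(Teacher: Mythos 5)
Your argument is correct and is essentially identical to the paper's: both define $F(t)=f(b+th_0)-tg^\mathsf{T}h_0$, pick $t_0\in(0,1)$ with $t_0h_0\in H$, and combine $F(0)\leq F(t_0)$ with the convexity bound $F(t_0)\leq(1-t_0)F(0)+t_0F(1)$ to conclude $F(0)\leq F(1)$. No differences worth noting.
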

\noindent We are now ready to prove Theorem \ref{27111155}. For $b\in \mathbb{R}^p$ and $J_{\lambda,I}(b):=J_{\lambda}\big(\I{b}\big)$ define the set
$H:=\big\{b\in\mathbb{R}^p:\ \|(b+h)_{I_1}\|_2>\ldots>\|(b+h)_{I_s}\|_2,\ \|(b+h)_{I_s}\|_2>\|h_{I_j}\|_2,\ j>s\big\}.$
If $g\in\partial J_{\lambda,I}(b)$, then for all $h\in H$ from definition of subgradient
\begin{equation}
\label{06191517}
\sum_{i=1}^s\lambda_i\|(b+h)_{I_i}\|_2+\sum_{i=s+1}^m\lambda_i\big(\I{(b+h)}\big)_{(i)}\geq\sum_{i=1}^s\lambda_i\|b_{I_i}\|_2 + \sum_{i=1}^sg_{I_i}^\mathsf{T}h_{I_i}+(g^c)^\mathsf{T}h^c,
\end{equation}
for $g^c:=(g_{I_{s+1}}^\mathsf{T},\ldots,g_{I_m}^\mathsf{T})^\mathsf{T}$ and $h^c:=(h_{I_{s+1}}^\mathsf{T},\ldots,h_{I_m}^\mathsf{T})^\mathsf{T}$. Define $\widetilde{I}:=\big\{\widetilde{I}_1,\ldots, \widetilde{I}_{m-s}\big\}$, with $\widetilde{I}_i:=\big\{(i-1)\cdot l + 1,\ldots, i\cdot l\big\}$. Then $\big\|(g^c)_{\widetilde{I}}\big\|_2 = \big\|g_{I^c}\big\|_2$. Consider first case, when $h$ belongs to the set $H^c:=\{h\in H:\ h_{I_i}\equiv0,\ i\leq s\}$. This yields
\begin{equation}
\label{06191400}
\sum_{i=1}^{m-s}\lambda_{s+i}\Big(\big(\|h_{I_{s+1}}\|_2,\ldots, \|h_{I_m}\|_2\big)^\mathsf{T}\Big)_{(i)}\geq (g^c)^\mathsf{T}h^c.
\end{equation}
Since $\{h^c:\ h\in H^c\}$ is open in $\mathbb{R}^{l(m-s)}$ and contains zero, from Corollary \ref{06182058} we have that $g^c\in \partial J_{\lambda^c,\widetilde{I}}(0)$ and the inequality (\ref{06191400}) is true for any $h^c\in\mathbb{R}^{l(m-s)}$ yielding  
\begin{equation}
0\geq \sup_{h^c}\Big\{(g^c)^\mathsf{T}h^c -J_{\lambda^c,\widetilde{I}}(h^c)\Big\} = J_{\lambda^c,\widetilde{I}}^*(g^c).
\end{equation}
The conjugate of $J_{\lambda^c,\widetilde{I}}$ is given by Corollary \ref{07071055}). This formulation immediately gives condition $\big\|(g^c)_{\widetilde{I}}\big\|_2\in C_{\lambda^c}$, which is equivalent with $\big\|g_{I^c}\big\|_2\in C_{\lambda^c}$. To find conditions for $g_{I_i}$ with $i\leq s$, define sets $H_i:=\{h\in H:\ h_{I_j}\equiv0,\ j\neq i\}$. For $h\in H_i$, (\ref{06191517}) reduces to $\lambda_i\|b_{I_i}+h_{I_i}\|_2\geq \lambda_i\|b_{I_i}\|_2+g_{I_i}^\mathsf{T}h_{I_i}$. Since the set $\{h_{I_i}:\ h\in H_i\}$ is open in $\mathbb{R}^l$ and contains zero, from Corollary \ref{06182058} we have $g_{I_i}\in \partial f_i(b_{I_i})$ for $f_i:\mathbb{R}^l\longrightarrow\mathbb{R}$, $f_i(x):=\lambda_i\|x\|_2$. For $i\leq s$, $f_i$ is differentiable in $b_{I_i}$, which gives $g_{I_i}=\lambda_i\frac{b_{I_i}}{\|b_{I_i}\|_2}$ for $i\in\{1,\ldots,s\}$. The statement follows from the fact that $\partial J_{\lambda,I}(wb)=w\cdot\partial J_{\lambda,I}(b)$.
%%%%%%%%%%%%%%%%%%%%%%%%%%%%%%%%%%%%%%%%%%%%%%%%%%%%%%%%%%%%%%%%%%%%%%%%%%%%%%%%%%%%%%%%%%%%%%%%%%%%%%%%%%%%%%%%%%%%%%%%%%%%%%%%%%%%%%%%

\section{Expected values of random matrices}
%%%%%%%%%%%%%%%%%%%%%%%%%%%%%%%%%%%%%%%%%%%%%%%%%%%%%%%%%%%%%%%%%%%%%%%%%%%%%%%%%%%%%%%%%%%%%%%%%%%%%%%%%%%%%%%%%%%%%%%%%%%%%%%%%%%%%%%%%
\subsection{The proof of Lemma \ref{25031835}}

The claim is obvious for $n=1$ and we will assume that $n>1$. First, we will list some basic properties of $A_X$. It could be easily noticed that: $A_X$ is symmetric matrix, $A_X$ is idempotent matrix (meaning that $A_XA_X=A_X$) and that $\operatorname{trace}(A_X)=\operatorname{trace}(X^\mathsf{T}X(X^\mathsf{T}X)^{-1})=r$. We will now show that for each $i\in\{1,\ldots,n\}$, $j\in\{1,\ldots,r\}$ the support of a $A_X(i,j)$ distribution is bounded, which will give us the existence of the expected value. Let $\|A\|_F$ be the Frobenius norm. Then 
\begin{equation}
\big|(A_X)_{i,j}\big| \leq \|A\|_F = \sqrt{\operatorname{trace}(A_X^\mathsf{T}A_X)}=\sqrt{\operatorname{trace}(A_X)}=\sqrt{r}.
\end{equation}
We will use notation $E_X: = \mathbb{E}\left(A_X\right)$. Since entries of matrix $X$ are randomized independently with the same distribution, $E_X$ is invariant under permutation applied to rows, i.e. $E_X=E_{PX}$ for any permutation matrix $P$. This gives $E_X = PE_XP^\mathsf{T}$, which means that applying the same permutation to rows and columns has no impact on expected value. We will show that
\begin{equation}
\label{24032202}
(E_X)_{i,j}=(E_X)_{1,n},\textrm{ for }i<j.
\end{equation}
Consider first the case when $i=1$ and $1<j<n$. Denoting by $P_{j\leftrightarrow n}$ matrix corresponding to transposition which replaces elements $j$ and $n$, we have $(E_X)_{1,j} = \big(P_{j\leftrightarrow n}E_XP_{j\leftrightarrow n}^\mathsf{T}\big)_{1,j}= (E_X)_{1,n}$. When $j=n$ and $1<i<n$, the same reasoning works with $P_{1\leftrightarrow i}$. Suppose now, that $1<i<n$ and $1<j<n$. We get $(E_X)_{i,j}=(E_X)_{1,n}$ analogously by using arbitrary permutation matrix $P$ which replaces element $j$ with $n$ and element $i$ with $1$. Since $A_X$ is symmetric, (\ref{24032202}) is true also for $i>j$. On the other hand, for all $i,j \in\{1,\ldots, n\}$, we have $(E_X)_{i,i} = \big(P_{j\leftrightarrow i}E_XP_{j\leftrightarrow i}^\mathsf{T}\big)_{i,i}= (E_X)_{j,j}$. Consequently, all off-diagonal entries of $E_X$ are equal to some $t$ and all diagonal entries have the same value $d$. Since
\begin{equation}
nd = \operatorname{trace}(E_X) = \sum_{i=1}^n\mathbb{E}(A_X(i,i)) = \mathbb{E}\left(\sum_{i=1}^n(A_X)_{i,i}\right) = r,
\end{equation}
we have $d=\frac rn$ and it remains to show that $t=0$. Define $\Sigma: = \left[\begin{BMAT}{c0c}{c0c}-1&\mathbf{0}^\mathsf{T}\\ \mathbf{0}&\mathbf{I}_{n-1}\end{BMAT}\right]$. Then $\Sigma X_S$ differs from $X_S$ only by signs of the first row. Since entries of matrix $X_S$ have zero-symmetric distribution, we have $E_X = E_{\Sigma X}$. Now
\begin{equation}
\left[\begin{BMAT}{c0c}{c0c}d&\mathbf{1}_{n-1}^\mathsf{T}t\\ \mathbf{1}_{n-1}t& \ddots\end{BMAT}\right] = E_X = \Sigma E_X \Sigma = \left[\begin{BMAT}{c0c}{c0c}d&-\mathbf{1}_{n-1}^\mathsf{T}t\\ -\mathbf{1}_{n-1}t& \ddots\end{BMAT}\right], 
\end{equation}
which implies $t=0$ and proves the statement.
%%%%%%%%%%%%%%%%%%%%%%%%%%%%%%%%%%%%%%%%%%%%%%%%%%%%%%%%%%%%%%%%%%%%%%%%%%%%%%%%%%%%%%%%%%%%%%%%%%%%%%%%%%%%%%%%%%%%%%%%%%%%%%%%%%%%%%%%%
\subsection{The proof of Lemma \ref{31032152}}
It is easy to see that $M_{X,\lambda}$ is symmetric, positive semi-definite matrix. Denote by $\|M_{X,\lambda}\|_*$ the nuclear (trace) norm of matrix $M_{X,\lambda}$. We have
\begin{equation}
\label{01040935}
\begin{split}
&\mathbb{E}\big|(M_{X,\lambda})_{i,j}\big|\leq \mathbb{E}\big(\|M_{X,\lambda}\|_*\big)=\mathbb{E}\big(\operatorname{trace}(M_{X,\lambda})\big)=\mathbb{E}\big(\operatorname{trace}(H_{\lambda,\beta}^\mathsf{T}B_X^\mathsf{T}B_XH_{\lambda,\beta})\big)=\\
&\mathbb{E}\big(H_{\lambda,\beta}^\mathsf{T}(X^\mathsf{T}X)^{-1}H_{\lambda,\beta}\big)=\frac {n} {(n-r-1)}\,H_{\lambda,\beta}^\mathsf{T}H_{\lambda,\beta}=\frac {n\,\|\lambda^S\|_2^2} {n-r-1},
\end{split}
\end{equation}
since $X^\mathsf{T}X$ follows an inverse Wishart distribution. This gives the existence of $E_X:=\mathbb{E}(M_{X,\lambda})$. Analogously to situation in Lemma \ref{25031835}, $E_X$ is invariant under permutation or signs changes applied to rows of $X$, i.e. $E_X=E_{PX}$ for any permutation matrix $P$, and $E_X = E_{\Sigma X}$ for diagonal matrix $\Sigma$ with entries on diagonal coming from set $\{-1,1\}$. Since $E_{PX} = PE_XP^\mathsf{T}$ and $E_{\Sigma X} = \Sigma E_X\Sigma$, as before we have that $E_X$ is diagonal matrix with all diagonal entries having the same value $d$. The value $d$ could be easy found using (\ref{01040935}) since we have
\begin{equation}
n d = \operatorname{trace}\big(E_X\big) = \frac {n\,\|\lambda^S\|_2^2} {n-r-1}. 
\end{equation}
%%%%%%%%%%%%%%%%%%%%%%%%%%%%%%%%%%%%%%%%%%%%%%%%%%%%%%%%%%%%%%%%%%%%%%%%%%%%%%%%%%%%%%%%%%%%%%%%%%%%%%%%%%%%%%%%%%%%%%%%%%%%%%%%%%%%%%%%

\section{Stopping criteria for numerical algorithm}
Without loss of generality assume that $\sigma=1$. We will start with optimization problem in (\ref{07031414}), namely
\begin{equation}
\label{24022120}
\minimize{\eta}\ \ f(\eta)=\frac12\|y-\X M\eta\|_2^2+J_{\lambda}\big(\iI{\eta}\big)
\end{equation}
for $\iI{\eta}=\Big(\|\eta_{\II_1}\|_2,\ldots,\|\eta_{\II_m}\|_2\Big)^\mathsf{T}$ and ${M_{\II_i,\II_i}=\frac1{w_i}\mathbf{I}_{l_i}}$, $i=1,\ldots,m$. This problem could be written in equivalent form
\begin{equation}
\label{07071412}
\begin{array}{cl}
\minimize{\eta, r, c}&\frac12\|r\|^2_2+c\\[.1cm]
\textrm{s.t.}&\left\{\begin{array}{l}J_{\lambda,\II}(\eta)-c\leq0\\y-r-\X M\eta=0\end{array}\right.
\end{array}
\end{equation}
$\big($notice that for $(\eta^*, r^*, c^*)$ being solution, it must occurs $c^*=J_{\lambda,\II}(\eta^*)\big)$. Since ($\ref{07071412}$) is convex and $(\eta_0, r_0, c_0)$, for $\eta_0=0$, $r_0=y$ and $c_0=1$, is strictly feasible, the strong duality holds. Lagrange dual function for this problem is given by
\begin{equation}
\begin{split}
g(\mu,\nu)=&\inf_{\eta,r,c}\ \bigg\{\frac12\|r\|_2^2+c+\mu^\mathsf{T}\big(y-r-\X M\eta\big)+\nu\big(J_{\lambda,\II}(\eta)-c\big)\bigg\}=\\
&\mu^\mathsf{T}y+\inf_r\ \bigg\{\frac12\|r\|^2_2-\mu^\mathsf{T}r\bigg\}+\inf_c\ \big\{c-\nu c\big\}+\inf_{\eta}\ \big\{-\mu^\mathsf{T}\X M\eta+\nu J_{\lambda,\II}(\eta)\big\}.
\end{split}
\end{equation}
Now, since the minimum of $\frac12\|r\|^2_2-\mu^\mathsf{T}r$ is taken for $r=\mu$, we have
\begin{equation}
g(\mu,\nu)=\mu^\mathsf{T}y-\frac12\|\mu\|^2_2+\inf_c\ \big\{c-\nu c\big\}-J_{\nu\lambda,\II}^*\big((\X M)^\mathsf{T}\mu\big).
\end{equation} 
Then $\nu^*=1$ and from Corollary \ref{07071055}, the dual problem to (\ref{07071412}) is equivalent to
\begin{equation}
\label{07071448}
\begin{array}{cl}
\maximize{\mu}&\mu^\mathsf{T}y-\frac12\|\mu\|_2^2\\[.1cm]
\textrm{s.t.}&\iI{M\X^\mathsf{T}\mu}\in C_{\lambda}
\end{array}.
\end{equation}
Let $(\eta^*,r^*,c^*)$ be primal and $(\mu^*, 1)$ be dual solution to (\ref{07071412}). Obviously, $\mu^*=r^*=y-\X M\eta^*$ and $c^*=J_{\lambda,\II}(\eta^*)$. Furthermore, from strong duality we have
\begin{equation}
\frac12\|y-\X M\eta^*\|^2_2+J_{\lambda,\II}(\eta^*) = (y-\X M\eta^*)^\mathsf{T}y-\frac12\|y-\X M\eta^*\|_2^2,
\end{equation}
which gives $(\X M\eta^*)^\mathsf{T}(y-\X M\eta^*)=J_{\lambda,\II}\big(\eta^*\big)$. Now, for current approximate $\eta^{[k]}$ of solution to (\ref{24022120}), achieved after applying proximal gradient method, we define the current duality gap for $k$ step as
\begin{equation}
\rho(\eta^{[k]})=(\X M\eta^{[k]})^\mathsf{T}(y-\X M\eta^{[k]})-J_{\lambda,\II}\big(\eta^{[k]}\big)
\end{equation} 
and we will determine the infeasibility of $\mu^{[k]}:=y-\X M\eta^{[k]}$ by using the measure
\begin{equation}
\textrm{infeas}\big(\mu^{[k]}\big): = \max\left\{J^D_{\lambda,\II}\big(M \X^\mathsf{T}\mu^{[k]}\big)-1,0\right\}
\end{equation}
To define the stopping criteria we have applied the widely used procedure: treat $\rho(\eta^{[k]})$ as indicator telling how far $\eta^{[k]}$ is from true solution and terminate the algorithm when this difference and infeasibility measure are sufficiently small. Summarizing, we have derived algorithm according to scheme
\begin{algorithm}
  \caption{Algorithm for group SLOPE}
  \begin{algorithmic}[!]
    \State \textbf{input:} infeas.tol:  {\itshape positive number determining the tolerance for infeasibility};
		\State \ \ \ \ \ \ \ \ \ \ dual.tol:\ \ \ {\itshape positive number determining the tolerance for duality gap};
		\State \ \ \ \ \ \ \ \ \ \ \ \ $k:=0$,\ \  $\eta^{[0]}$,\ \ $\mu^{[0]}:=\mu(\eta^{[0]})$,\ \ $\textrm{infeas}^{[0]}:=\textrm{infeas}\big(\mu^{[0]}\big)$,\ \ $\rho^{[0]}:=\rho(\eta^{[0]})$;
    \While{ ( $\textrm{infeas}^{[k]}>\textrm{infeas.tol}$\ \  or\ \  $\rho^{[k]}>\textrm{dual.tol}$) }
		\State 1. $k\gets k+1$;
		\State 2. get $\eta^{[k]}$ from Algorithm \ref{24021312};
		\State 3. $\mu^{[k]}:=\mu(\eta^{[k]})$;
		\State 4. $\textrm{infeas}^{[k]}:=\textrm{infeas}\big(\mu^{[k]}\big)$, $\rho^{[k]}:=\rho(\eta^{[k]})$; 
    \EndWhile
		\State $\beta_{gS}: = M\eta^{[k]}$.
  \end{algorithmic}
\end{algorithm}

\end{document}